\newtheorem{theorem}{Theorem}[section]
\newtheorem{lemma}[theorem]{Lemma}
\newtheorem{proposition}[theorem]{Proposition}
\newtheorem{corollary}[theorem]{Corollary}
\newtheorem{conjecture}[theorem]{Conjecture}
\newenvironment{definition}[1][Definition]{\begin{trivlist}
\item[\hskip \labelsep {\bfseries #1}]}{\end{trivlist}}
\newenvironment{example}[1][Example]{\begin{trivlist}
\item[\hskip \labelsep {\bfseries #1}]}{\end{trivlist}}
\newenvironment{problem}[1][Problem]{\begin{trivlist}
\item[\hskip \labelsep {\bfseries #1}]}{\end{trivlist}}
\newcommand{\len}{\operatorname{len}}
\newcommand{\ZZ}{\mathbb{Z}}
\newcommand{\ASn}{\tilde{S}}
\newcommand{\NDPF}{\operatorname{NDPF}}
\newcommand{\BNDPF}{\operatorname{BNDPF}}
\newcommand{\ANDPF}{\operatorname{NDPF^{(1)}}}
\newcommand{\leftexp}[2]{{\vphantom{#2}}^{#1}{#2}}
\newcommand{\N}{\mathbb{N}}
\newcommand{\JJ}{\mathcal{J}}
\newcommand{\TODO}[2][To do: ]{\textcolor{red}{\textbf{#1#2}}}
\newcommand{\TODO}[2][]{}
\begin{document}
\title{Algebraic and Affine Pattern Avoidance}
\author{Tom Denton}
\date{}

\maketitle

{\abstract 
We investigate various connections between the $0$-Hecke monoid, Catalan monoid, and pattern avoidance in permutations, providing new tools for approaching pattern avoidance in an algebraic framework.  In particular, we characterize containment of a class of `long' patterns as equivalent to the existence of a corresponding factorization.  We then generalize some of our constructions to the affine setting.
}

\tableofcontents

\section{Introduction}

Pattern avoidance is a rich and interesting subject which has received much attention since Knuth first connected the notion of $[231]$-avoidance with stack sortability~\cite{knuth.TAOCP1}.  Pattern avoidance has also appeared in the study of smoothness of Schubert varieties~\cite{billeyLakshmibai.2000, Billey98patternavoidance}, the Temperley-Lieb algebra and the computation of Kazhdahn-Lusztig polynomials~\cite{fan.1996, fanGreen.1999}.  There is also an extensive literature on enumeration of permutations avoiding a given pattern; for an introduction, see~\cite{bona.permutations}.  
Pattern containment (the complementary problem to pattern avoidance) was previously known to be related to the strong Bruhat order; in particular, Tenner showed that a principal order ideal of a permutation is Boolean if and only if the permutation avoids the patterns $[321]$ and $[3412]$~\cite{tenner.2007}.  

While many have studied pattern avoidance for particular patterns, there are relatively few results on pattern avoidance as a general phenomenon.  Additionally, while there has been a great deal of combinatorial research on pattern avoidance, there have been few algebraic characterizations.  
In this paper, we first introduce an equivalence between pattern containment and a factorization problem for certain permutation patterns.  We then use these results directly in analysing the fibers certain quotients of the $0$-Hecke monoid.  Finally, we consider the question of pattern avoidance in the affine permutation group.

We begin by introducing the notion of a width system, which, in some cases, allows the factorization of a permutation $x$ containing a pattern $\sigma$ as $x=y\sigma'z$, where $\sigma'$ is a `shift' of $\sigma$, $y$ and $z$ satisfy certain compatibility requirements, and the $\len(x)=\len(y)+\len(\sigma)+\len(z)$.  This factorization generalizes an important result of Billey, Jockusch, and Stanley~\cite{BilleyJockuschStanley.1993}, which states that any permutation $x$ containing a $[321]$-pattern contains a braid; that is, some reduced word for $x$ in the simple transpositions contains a contiguous subword $s_i s_{i+1} s_i$.  (This subword, in our context, plays the role of the $\sigma'$.)  Equivalently, a permutation that is $[321]$-avoiding is fully commutative, meaning that every reduced word may be obtained by commutation relations.  These permutations have been extensively studied, with major contributions by Fan and Green~\cite{fan.1996, fanGreen.1999} and Stembridge~\cite{stembridge.1996}, who associated a certain poset to each fully commutative element, where linear extensions of the poset are in bijection with reduced words for the permutation. 

Width systems allow us to extend this notion of subword containment considerably, and give an algebraic condition for pattern containment for certain patterns.  The width system is simply a measure of various widths of a pattern occurrence within a permutation (called an `instance').  For certain width systems, an instance of minimal width implies a factorization of the form discussed above.  These width systems tend to exist for relatively long permutations.  The main results are contained in Propositions~\ref{prop:s2widthSystems}, \ref{prop:s3widthSystems}, \ref{prop:widthSystemExtend}, \ref{prop:widthSystemExtend2}, and Corollary~\ref{cor:bountifulPerms}.  

We then apply these results directly, and study pattern avoidance of certain patterns (most interestingly  $[321]$-avoidance) in the context of quotients of the $0$-Hecke monoid.
The non-decreasing parking functions $\NDPF_N$ may be realized as a quotient of the $0$-Hecke monoid for the symmetric group $S_N$, and coincide with the set of order-preserving regressive functions on a poset when the poset is a chain.  These functions are enumerated by the Catalan numbers; if one represents $f \in \NDPF_N$ as a step function, its graph will be a (rotated) Dyck path.  These functions form a $\JJ$-trivial monoid under composition, and may be realized as a quotient of the $0$-Hecke monoid; the monoid $\NDPF_n$ coincides with the Catalan monoid.  We show that the fibers of this quotient each contain a unique $[321]$-avoiding permutation of minimal length and a $[231]$-avoiding permutation of maximal length (Theorem ~\ref{thm:ndpfFibers231}).  We then show that a slightly modified quotient has fibers containing a unique $[321]$-avoiding permutation of minimal length, and a $[312]$-avoiding permutation of maximal length (Theorem~\ref{thm:ndpfFibers312}).

This provides a bijection between $[312]$ and $[321]$-avoiding permutations.  The bijection is equivalent to the bijection of Simion and Schmidt between $[132]$-avoiding permutations and $[123]$-avoiding permutations~\cite{simion.schmidt.1985}, but here we have given an algebraic interpretation of the bijection.  (The patterns $[312]$ and $[123]$ are the respective ``complements'' of the patterns $[312]$ and $[321]$.)

We then combine these results to obtain a bijection between $[4321]$-avoiding permutations and elements of a submonoid of $\NDPF_{2N}$ (Theorem~\ref{thm:ndpfFibers4321}), which we consider as a parabolic submonoid of a type $B$ generalization of non-decreasing parking functions, which coincide with the double Catalan monoid~\cite{mazorchukSteinberg2011}.

We then expand our discussion to the affine symmetric group and affine $0$-Hecke monoid.  The affine symmetric group was introduced originally by Lusztig~\cite{lusztig.1983}, and questions concerning pattern avoidance in the affine symmetric group have recently been studied by Lam~\cite{Lam06affinestanley}, Green~\cite{Green.2002}, Billey and Crites~\cite{billeyCrites.2011}.  Lam and Green separately showed that an affine permutation contains a $[321]$-pattern if and only if it contains a braid, in the same sense as in the finite case.  

We introduce a definition for affine non-decreasing parking functions $\ANDPF_N$, and demonstrate that this monoid of functions may be obtained as a quotient of the affine symmetric group.  We obtain a combinatorial map from affine permutations to $\ANDPF_N$ and demonstrate that this map coincides with the definition of $\ANDPF_N$ by generators and relations as a quotient of $\ASn_N$.  Finally, we prove that each fiber of this quotient contains a unique $[321]$-avoiding element of minimal length (Theorem~\ref{thm:affNdpfFibers321}).  

\subsection{Overview.}

In Section~\ref{sec:widthSystems} we introduce \textbf{width systems} on permutation patterns as a potential system for understanding pattern containment algebraically.  The main results of this section describe a class of permutation patterns $\sigma$ such that any permutation $x$ containing $\sigma$ factors as $x=y\sigma' z$, with $\len(x)=\len(y)+\len(\sigma)+\len(z)$.  Here $\sigma'$ is a ``shift'' of $\sigma$, and some significant restrictions on $y$ and $z$ are established.  
The main results are contained in Propositions~\ref{prop:s2widthSystems}, \ref{prop:s3widthSystems}, \ref{prop:widthSystemExtend},\ref{prop:widthSystemExtend2}, and Corollary~\ref{cor:bountifulPerms}.  

We apply these ideas directly in Section~\ref{sec:ndpfPattAvoid} while analyzing the fiber of a certain quotient of the $0$-Hecke monoid of the symmetric group.  In Theorem~\ref{thm:ndpfFibers231}, we show that each fiber of the quotient contains a unique $[321]$-avoiding permutation and a unique $[231]$-avoiding permutation.  We then apply an involution and study a slightly different quotient in which fibers contain a unique $[321]$-avoiding permutation and a unique $[312]$-avoiding permutation (Theorem~\ref{thm:ndpfFibers312}).  In Section~\ref{sec:bndpfPattAvoid}, we consider a different monoid-morphism of the $0$-Hecke monoid for which each fiber contains a unique $[4321]$-avoiding permutation (Theorem~\ref{thm:ndpfFibers4321}).

We then define the Affine Nondecreasing Parking Functions in Section~\ref{sec:affNdpfPattAvoid}, and establish these as a quotient of the $0$-Hecke monoid of the affine symmetric group.  We prove the existence of a unique $[321]$-avoiding affine permutation in each fiber of this quotient (Theorem~\ref{thm:affNdpfFibers321}). 

\subsection{Acknowledgements.}

This paper originally appeared as a chapter in the author's PhD thesis, awarded by the University of California, Davis.  As such thanks are due to my co-advisors, Prof. Anne Schilling and Nicolas M. Thi\'ery, as well as my committee members, who provided useful feedback during the writing process.  Thanks are also due to the incredible math department at Davis, which provided a fertile ground for study for five years.  As I prepare this paper, I am a postdoctoral researcher at York University.  Additional support (and copious amounts of coffee) is provided by the Fields Institute.

\section{Background and Notation}
\label{sec:bgnot}

\subsection{Pattern Avoidance.}

Pattern avoidance phenomena have been studied extensively, originally by Knuth in his 1973 classic, The Art of Computer Programming~\cite{knuth.TAOCP1}.  A thorough introduction to the subject may be found in the book ``Combinatorics of Permutations'' by Bona~\cite{bona.permutations}.  A \textbf{pattern} $\sigma$ is a permutation in $S_k$ for some $k$; given a permutation $x \in S_N$, we say that $x$ \textbf{contains the pattern $\sigma$} if, in the one-line notation for $x=[x_1, \ldots, x_N]$, there exists a subsequence $[x_{i_1}, \ldots, x_{i_k}]$ whose elements are in the same relative order as the elements in $p$.  If $x$ does not contain $\sigma$, then we say that 
$x$ \textbf{avoids $\sigma$}, or that $x$ is \textbf{$\sigma$-avoiding.}  (Note that if $k>N$, $x$ must avoid $\sigma$.)

For example, the pattern $[1, 2]$ appears in any $x$ such that there exists a $x_i < x_j$ for some $i<j$.  The only $[12]$-avoiding permutation in $S_N$, then, is the long element, which is strictly decreasing in one-line notation.  As a larger example, the permutation $[\mathbf{3}, \mathbf{4}, 5, \mathbf{2}, 1, 6]$ contains the pattern $[231]$ at the bold positions.  In fact, this permutation contains six distinct instances of the pattern $[231]$.

An interesting and natural question is, given a pattern $\sigma$, how many permutations in $S_N$ avoid $\sigma$?  It has been known since Knuth's original work that for any pattern in $S_3$, there are Catalan-many permutations in $S_N$ avoiding $\sigma$~\cite{knuth.TAOCP1}.

The $[321]$-avoiding permutations are of particular importance.  It was shown in~\cite{BilleyJockuschStanley.1993} that a permutation $x\in S_N$ is $[321]$-avoiding if and only if $x$ is `braid free.'  In particular, this means that there is no reduced word for $x$ containing the consecutive subsequence of $s_i s_{i+1} s_i$ (or $s_{i+1} s_i s_{i+1}$, equivalently), where the $s_i$ are the simple transpositions generating $S_N$.  Such permutations are called \textbf{fully commutative}. 

Lam~\cite{Lam06affinestanley} and Green~\cite{Green.2002} separately showed that this result extends to the  affine symmetric group.  The affine symmetric group (see Definition~\ref{def:affSn}) is a subset of the permutations of $\ZZ$, satisfying some periodicity conditions.  Pattern avoidance for the affine symmetric group works exactly as in a finite symmetric group.  
The one-line notation for $x$ is the doubly infinite sequence $x=[\ldots, x_{-1}, x_0, x_1, \ldots, x_N, x_{N+1}, \ldots]$.
Then $x$ contains a pattern $\sigma$ if any subsequence of $x$ in one-line notation has the same relative order as $\sigma$.  \textbf{Fully commutative elements} of the affine symmetric group are those which have no reduced word containing the consecutive subsequence $s_i s_{i+1} s_i$, where the indices are considered modulo $N$.
Green showed that the fully commutative elements of the affine symmetric group coincide with the $[321]$-avoiding affine permutations.

Fan and Green~\cite{fan.1996, fanGreen.1999} previously studied the quotient of the full Hecke algebra $H_q(W)$ for $W$ simply-laced, by the ideal $I$ generated by $T_{sts} + T_{st} + T_{ts} + T_s + T_t + 1$ for $s$ and $t$ generators of $W$ satisfying a braid relation $sts=tst$.  This quotient $H/I$ yields the \textbf{Temperley-Lieb Algebra}.  Fan showed that this quotient has a basis indexed by fully commutative elements of $W$, and in further work with Richard Green derived information relating this quotient to the Kazhdan-Lusztig basis for $H_q(W)$.

A further application of pattern avoidance occurs in the study of rational smoothness of Schubert varieties; an introduction to this topic may be found in~\cite{billeyLakshmibai.2000}.  The Schubert varieties $X_w$ in Type $A$ are indexed by permutations; a result of Billey~\cite{Billey98patternavoidance} shows that $X_w$ is smooth if and only if $w$ is simultaneously $[3412]$- and $[4231]$-avoiding.  More recently, Billey and Crites have extended this result to affine Schubert varieties (for affine Type A)~\cite{billeyCrites.2011}, showing that an affine Schubert variety $X_w$ is rationally smooth if and only if $w$ is simultaneously $[3412]$- and $[4231]$-avoiding or is a special kind of affine permutation, called a ``twisted spiral.''  

\subsection{$0$-Hecke monoids}
\label{ssec:zeroHeckeDefinition}

Let $W$ be a finite Coxeter group with index set $I=\{1, \ldots, N-1\}$. It has a presentation
\begin{equation}
  W = \langle\, s_i \; \text{for} \; i\in I\ \text{such that }\ (s_is_j)^{m(s_i,s_j)},\ \forall i,j\in I\,\rangle\,,
\end{equation}
where $I$ is a finite set, $m(s_i,s_j) \in \{1,2,\dots,\infty\}$, and $m(s_i,s_i)=1$.
The elements $s_i$ with $i\in I$ are called \emph{simple reflections}, and the
relations can be rewritten as:
\begin{equation}
  \begin{alignedat}{2}
    s_i^2 &=1 &\quad& \text{ for all $i\in I$}\,,\\
    \underbrace{s_is_js_is_js_i \cdots}_{m(s_i,s_j)} &=
    \underbrace{s_js_is_js_is_j\cdots}_{m(s_i,s_j)} && \text{ for all $i,j\in I$}\, ,
  \end{alignedat}
\end{equation}
where $1$ denotes the identity in $W$. An expression $w=s_{i_1}\cdots s_{i_\ell}$ for $w\in W$ 
is called \emph{reduced} if it is of minimal length $\ell$.
See~\cite{Bjorner_Brenti.2005, Humphreys.1990} for further details on Coxeter groups.

The Coxeter group of type $A_{N-1}$ is the symmetric group $S_N$ with generators
$\{s_1,\dots,s_{N-1}\}$ and relations:
\begin{equation}
  \begin{alignedat}{2}
    s_i^2           & = 1                &     & \text{ for } 1\leq i\leq n-1\,,\\
    s_i s_j         & = s_j s_i            &     & \text{ for } |i-j|\geq2\,, \\
    s_i s_{i+1} s_i & = s_{i+1} s_i s_{i+1} &\quad& \text{ for } 1\leq i\leq n-2\,;
  \end{alignedat}
\end{equation}
the last two relations are called the \emph{braid relations}.

\begin{definition}[\textbf{$0$-Hecke monoid}]
The $0$-Hecke monoid $H_0(W) = \langle \pi_i \mid i \in I \rangle$ of a Coxeter group $W$ 
is generated by the \emph{simple projections} $\pi_i$ with relations
\begin{equation}
  \begin{alignedat}{2}
    \pi_i^2 &=\pi_i &\quad& \text{ for all $i\in I$,}\\
    \underbrace{\pi_i\pi_j\pi_i\pi_j\cdots}_{m(s_i,s_{j})} &=
    \underbrace{\pi_j\pi_i\pi_j\pi_i\cdots}_{m(s_i,s_{j})} && \text{ for all $i,j\in I$}\ .
  \end{alignedat}
\end{equation}
Thanks to these relations, the elements of $H_0(W)$ are canonically
indexed by the elements of $W$ by setting $\pi_w :=
\pi_{i_1}\cdots\pi_{i_k}$ for any reduced word $i_1 \dots i_k$ of $w$.
\end{definition}

\subsection{Non-decreasing Parking Functions}
\label{ssec:ndpf}

We consider a collection of functions which form a monoid under composition.
  Notice that we use the right action in this paper, so
that for $x\in P$ and a function $f:P\to P$ we write $x.f$ for the value of
$x$ under $f$. 

\begin{definition}[\textbf{Monoid of Non-Decreasing Parking Functions}]
  Let $P=\{1, \ldots, N+1\}$ be a poset. The set $\NDPF_{N+1}$ of functions $f: P \to P$ which are
  \begin{itemize}
  \item \emph{order preserving}, that is, for all $x,y\in P,\ x\leq_P y$ implies
    $x.f\leq_P y.f$
  \item \emph{regressive}, that is, for all $x\in P$ one has $x.f \leq_P x$
  \end{itemize}
  is a monoid under composition.
\end{definition}
\begin{proof}
  It is trivial that the identity function is order preserving and regressive and that
  the composition of two order preserving and regressive functions is as well.
\end{proof}

According to~\cite[14.5.3]{Ganyushkin_Mazorchuk.2009}, not much is
known about these monoids.

When $P$ is a chain on $N$ elements, we obtain the monoid $\NDPF_N$ of
nondecreasing parking functions on the set $\{1, \ldots, N\}$ (see
e.g.~\cite{Solomon.1996}; it also is described under the notation
$\mathcal C_N$ in e.g.~\cite[Chapter~XI.4]{Pin.2009} and, together
with many variants, in~\cite[Chapter~14]{Ganyushkin_Mazorchuk.2009}).
The unique minimal set of generators for $\NDPF_N$ is given by the
family of idempotents $(\pi_i)_{i\in\{1,\dots,N-1\}}$, where each
$\pi_i$ is defined by $(i+1).\pi_i:=i$ and $j.\pi_i:=j$ otherwise. The
relations between those generators are given by:
\begin{gather*}
  \pi_i\pi_j = \pi_j\pi_i \quad \text{ for all $|i-j|>1$}\,,\\
  \pi_i\pi_{i-1}=\pi_i\pi_{i-1}\pi_i=\pi_{i-1}\pi_i\pi_{i-1}\,.
\end{gather*}
It follows that $\NDPF_N$ is the natural quotient of $H_0(S_N)$ by
the relation $\pi_i\pi_{i+1}\pi_i = \pi_{i+1}\pi_i$, via the quotient
map $\pi_i\mapsto
\pi_i$~\cite{Hivert.Thiery.HeckeSg.2006,Hivert.Thiery.HeckeGroup.2007,
  Ganyushkin_Mazorchuk.2010}. Similarly, it is a natural quotient of
Kiselman's
monoid~\cite{Ganyushkin_Mazorchuk.2010,Kudryavtseva_Mazorchuk.2009}.  
In~\cite{dhst.2011}, this monoid was studied as an instance of the larger class of order-preserving regressive functions on monoids, and a set of explicit orthogonal idempotents in the algebra was described.

\section{Width Systems, Pattern Containment, and Factorizations.}
\label{sec:widthSystems}

In this section we introduce width systems on permutation patterns, which sometimes provide useful factorizations of a permutation containing a given pattern.  The results established here will be directly applied in Sections~\ref{sec:ndpfPattAvoid} and~\ref{sec:bndpfPattAvoid}. 

\begin{definition}
Let $x$ be a permutation and $\sigma \in S_k$ a pattern.  We say that $x$ \textbf{factorizes over $\sigma$} if there exist permutations $y$, $z$, and $\sigma'$ such that:
\begin{enumerate}
\item $x = y \sigma' z$,
\item $\sigma'$ has a reduced word matching a reduced word for $\sigma$ with indices shifted by some $j$,
\item The permutation $y$ satisfies $y^{-1}(j)<\cdots < y^{-1}(j+k)$, 
\item The permutation $z$ satisfies $z(j)<\cdots < z(j+k)$,
\item $\len(x) = \len(y) + \len(\sigma') + \len(z)$.
\end{enumerate}
\end{definition}

Set $W=S_N$ and $J\subset I$, with $I$ the generating set of $W$.  An element $x\in W$ has a \textbf{right descent} $i$ if $\len(x s_i)<\len(x)$, and has a \textbf{left descent} $i$ if $\len(s_i x)<\len(x)$.  Equivalently, $x$ has a right (resp., left) descent at $i$ if and only if some reduced word for $x$ ends (resp., begins) with $i$.  Let $W^J$ be the set of elements in $W$ with no right descents in $J$.  Similarly, $\leftexp{J}{W}$ consists of those elements with no left descents in $J$.  Finally, $W_J$ is the \textbf{parabolic subgroup} of $W$ generated by $\{ s_i \mid i \in J\}$.

Recall that a \textbf{reduced word} or \textbf{reduced expression} for a permutation $x$ is a minimal-length expression for $x$ as a product of the simple transpositions $s_i$.  Throughout this chapter, we will use double parentheses enclosing a sequence of indices to denote words.  For example, $((1,3,2))$ corresponds to the element  $s_1s_2s_3$ in $S_4$.  Note that same expression can also indicate an element of $H_0(S_4)$, with $((1,3,2))$ corresponding to the element $\pi_1\pi_2\pi_3$.  Context should make usage clear.

\begin{definition}
Let $\sigma$ be a permutation pattern in $S_k$, with reduced word $((i_1, \ldots, i_m))$.  Let $J=\{j, j+1, \ldots, j+l\}$ for some $l\geq k-1$ and $\sigma' \in W_J$ with reduced word $((i_1+j, \ldots, i_m+j))$.  Then we call $\sigma'$ a \textbf{$J$-shift} or \textbf{shift} of $\sigma$.
\end{definition}

\begin{proposition}
A permutation $x\in S_N$ factorizes over $\sigma$ if and only if $x$ admits a factorization $x=y\sigma' z$ with $y \in W^J, \sigma' \in W_J$, and $z\in \leftexp{J}{W}$, and $\len(x) = \len(y) + \len(\sigma') + \len(z)$.
\end{proposition}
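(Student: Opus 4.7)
The plan is to unfold both characterizations into explicit ascending conditions on the one-line notations of $y$ and $z$, and show these conditions match up term-by-term. Under the paper's right-action convention, the one-line notation of $y s_i$ is obtained from that of $y$ by swapping the \emph{values} $i$ and $i+1$, so $y$ has a right descent at $i$ precisely when $y^{-1}(i) > y^{-1}(i+1)$; dually, $s_i y$ swaps \emph{positions} $i$ and $i+1$, so $z$ has a left descent at $i$ precisely when $z(i) > z(i+1)$. Consequently, for $J = \{j, j+1, \ldots, j+l\}$, membership $y \in W^J$ is equivalent to the chain $y^{-1}(j) < y^{-1}(j+1) < \cdots < y^{-1}(j+l+1)$, and membership $z \in \leftexp{J}{W}$ is equivalent to $z(j) < z(j+1) < \cdots < z(j+l+1)$. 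I would state these equivalences first as a brief dictionary lemma.

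For the forward direction, suppose $x = y\sigma' z$ witnesses that $x$ factorizes over $\sigma$, with shift parameter $j$. Choose $J = \{j, j+1, \ldots, j+k-1\}$, the minimal admissible index set (corresponding to $l = k-1$). The reduced word $((i_1+j, \ldots, i_m+j))$ of $\sigma'$ uses indices in $\{j+1, \ldots, j+k-1\} \subseteq J$, so $\sigma' \in W_J$. The hypothesis $y^{-1}(j) < \cdots < y^{-1}(j+k)$ says precisely that $y^{-1}(i) < y^{-1}(i+1)$ for each $i \in J$, placing $y$ in $W^J$; the analogous reading of $z(j) < \cdots < z(j+k)$ places $z$ in $\leftexp{J}{W}$. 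The identity $x = y\sigma' z$ and length additivity are assumed.

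For the reverse direction, assume $x = y\sigma' z$ with $y \in W^J$, $\sigma' \in W_J$, $z \in \leftexp{J}{W}$ for some $J = \{j, \ldots, j+l\}$ with $l \geq k-1$, and $\sigma'$ a $J$-shift of $\sigma$. The ascending chain characterizing $y \in W^J$ has length $l+2 \geq k+1$, so it contains the prefix $y^{-1}(j) < \cdots < y^{-1}(j+k)$ demanded by the definition of factorization over $\sigma$; the same truncation handles $z$. The remaining clauses are immediate: clause (1) is the factorization, (2) is built into the setup of a $J$-shift, and (5) is the length hypothesis.

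Expected difficulty: there is essentially no substantive obstacle, since the proposition is a reformulation of the definition using the language of parabolic coset representatives. The only care needed is in verifying the descent-versus-inverse-descent characterizations under the right-action convention, where the usual roles of $y$ and $y^{-1}$ are exchanged; once this dictionary is fixed, the two directions are immediate.
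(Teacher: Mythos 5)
Your proof is correct and takes essentially the same approach as the paper, which simply notes that the proposition is a restatement of the definition once one identifies conditions (3) and (4) with the no-descent characterizations of $W^J$ and $\leftexp{J}{W}$. Your version just makes explicit the dictionary between the one-line chain conditions and the descent conditions under the right-action convention, which the paper takes for granted.
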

\begin{proof}
This is simply a restatement of the definition of factorization over $\sigma$.  In particular, $y\in W^J$ and $z\in \leftexp{J}{W}$.
\end{proof}
This condition is illustrated diagrammatically in Figure~\ref{fig.patternContainment} using a string-diagram for the permutation $x$ factorized as $y \sigma' z$.  In the string diagram of a permutation $x$, a vertical string connects each $j$ to $x(j)$, with strings arranged so as to have as few crossings as possible.
Composition of permutations is accomplished by vertical concatenation of string diagrams.    In the diagram, $x$ is the vertical concatenation (and product of) of $y$, $\sigma'$ and $z$.

The permutation $y^{-1}$ preserves the order of $\{j, j+1, \ldots, j+k\}$, and thus the strings leading into the elements $\{j, j+1, \ldots, j+k\}$ do not cross.  Likewise, $z$ preserves the order of $\{j, j+1, \ldots, j+k\}$, and thus the strings leading out of $\{j, j+1, \ldots, j+k\}$ in $z$ do not cross.  In between, $\sigma'$ rearranges $\{j, j+1, \ldots, j+k\}$ according to the pattern $\sigma$.

\begin{figure}
  \begin{center}
  \includegraphics[scale=1]{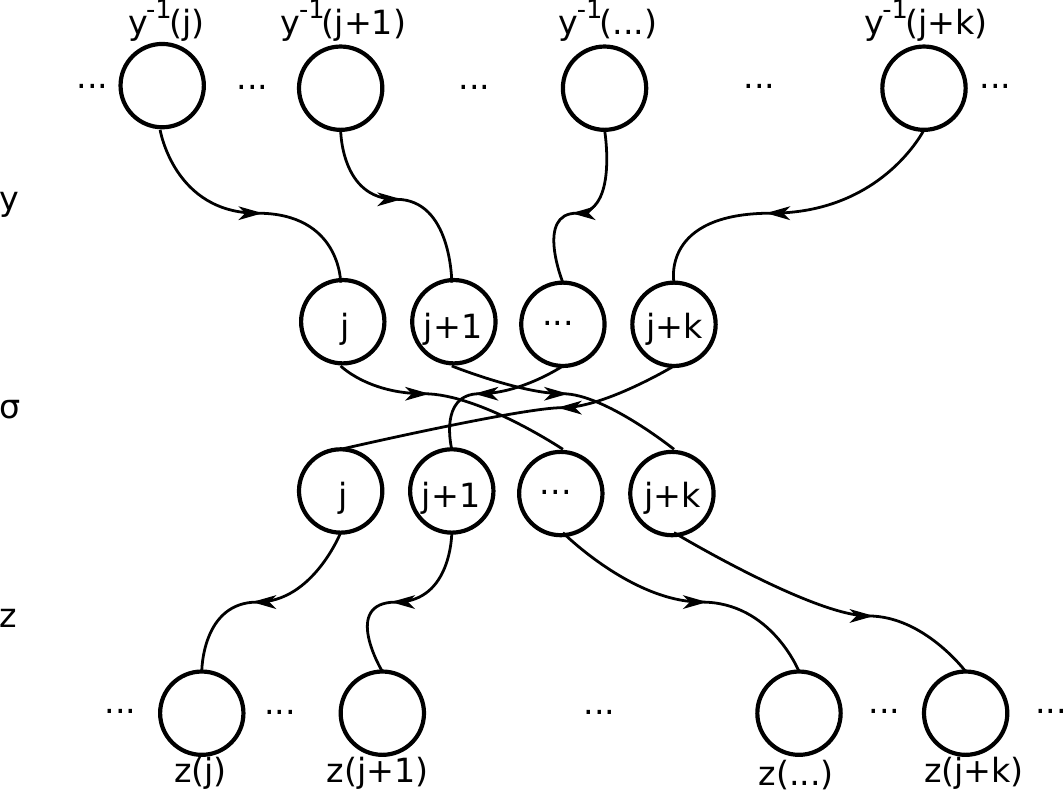}
  \end{center}
  \caption{Diagrammatic representation of a permutation $x$ factorizing over a pattern $\sigma$ as $x=y\sigma z$ by composition of string diagrams. }
  \label{fig.patternContainment}
\end{figure}

By the above discussion, it is clear that if $x$ admits a factorization $y\sigma' z$ with $y \in W^J, \sigma' \in W_J$, and $z\in \leftexp{J}{W}$ then $x$ contains $\sigma$.  The question, then, is when this condition is sharp.  This question is interesting because it provides an algebraic description of pattern containment.  For example, a permutation $x$ which contains a $[321]$-pattern is guaranteed to have a reduced expression which contains a braid.  Braid containment can be re-stated as a factorization over $[321]$.  When the factorization question is sharp, (ie, $x$ contains $\sigma$ if and only if $x$ factorizes over $\sigma$) one obtains an algebraic description of $\sigma$-containment.  The class of patterns with this property is rather larger than just $[321]$, as we will see in Propositions~\ref{prop:s2widthSystems}, \ref{prop:s3widthSystems}, and~\ref{prop:widthSystemExtend}.

\begin{problem}
For which patterns $\sigma$ does $x$ contain $\sigma$ if and only if $x\in W^J \sigma' \leftexp{J}{W}$, where $\sigma'$ is a $J$-shift of $\sigma$ for some $J$?
\end{problem}

As a tool for attacking this problem, we introduce the notion of a width system for a pattern.

\begin{definition}
Suppose $x$ contains $\sigma$ at positions $(i_1, \ldots, i_k)$; the tuple $P=(P_1, \ldots, P_k)$ is called an \textbf{instance} of the pattern $\sigma$, and we denote the set of all instances of $\sigma$ in $x$ by $P_x$.
\end{definition}

\begin{definition}
A \textbf{width} on an instance $P$ of $\sigma$ is a difference $P_j-P_i$ with $j>i$.  
A \textbf{width system} $w$ for a permutation pattern $\sigma \in S_k$ is a function assigning a tuple of widths to each instance of $\sigma$ in $x$.  An instance $P$ of a pattern in $x$ is \textbf{minimal} (with respect to $\sigma$ and $w$) if $w(P)$ is lexicographically minimal amongst all instances of $\sigma$ in $x$.  Finally, an instance $P=(P_1, \ldots, P_k)$ is \textbf{locally minimal} if $P$ is the minimal instance of $\sigma$ in the partial permutation $[x_{P_1}, x_{P_1+1}, \ldots, x_{P_k-1}, x_{P_k}]$.
\end{definition}

\begin{example}
Consider the pattern $[231]$ and let $P=(p, q, r)$ be an arbitrary instance of $\sigma$ in a permutation $x$.  We choose to consider the width system $w(P)=(r-p, q-p)$.  (Other width systems include $u(P)=(r-q, q-p)$ and $v(P)=(r-q)$, for example.)

The permutation $x=[3, 4, 5, 2, 1, 6]$ contains six $[231]$ patterns.  The following table records each $[231]$-instance $P$ and the width of the instance $w(P)$:
\begin{equation*}
\begin{array}[b]{|c c c|}
 \hline           &         P     &  w(P)  \\ \hline
 \left[\mathbf{3}, \mathbf{4}, 5, \mathbf{2}, 1, 6\right]       & (1, 2, 4) & (3, 1) \\
 \left[\mathbf{3}, \mathbf{4}, 5, 2, \mathbf{1}, 6\right]       & (1, 2, 5) & (4, 1) \\
 \left[\mathbf{3}, 4, \mathbf{5}, \mathbf{2}, 1, 6\right]       & (1, 3, 4) & (3, 2) \\
 \left[\mathbf{3}, 4, \mathbf{5}, 2, \mathbf{1}, 6\right]       & (1, 3, 5) & (4, 2) \\
 \left[3, \mathbf{4}, \mathbf{5}, \mathbf{2}, 1, 6\right]       & (2, 3, 4) & (2, 1) \\
 \left[3, \mathbf{4}, \mathbf{5}, 2, \mathbf{1}, 6\right]       & (2, 3, 5) & (3, 1) \\  \hline
\end{array}
\end{equation*}
Thus, under the width system $w$ the instance $(2, 3, 4)$ is the minimal $[231]$-instance; it is also the only locally minimal $[231]$-instance.

In the permutation $y=[1, 4, 8, 5, 2, 7, 6, 3]$, we have the following instances and widths of the pattern $[231]$:
\begin{equation*}
\begin{array}[b]{|c c c|}
  \hline          &         P     &  w(P)  \\ \hline
 \left[ 1, \mathbf{4}, \mathbf{8}, 5, \mathbf{2}, 7, 6, 3 \right]  & (2, 3, 5) & (3, 1) \\
 \left[ 1, \mathbf{4}, \mathbf{8}, 5, 2, 7, 6, \mathbf{3} \right]  & (2, 3, 8) & (6, 1) \\
 \left[ 1, \mathbf{4}, 8, \mathbf{5}, \mathbf{2}, 7, 6, 3 \right]  & (2, 4, 5) & (3, 2) \\
 \left[ 1, \mathbf{4}, 8, \mathbf{5}, 2, 7, 6, \mathbf{3} \right]  & (2, 4, 8) & (6, 2) \\
 \left[ 1, \mathbf{4}, 8, 5, 2, \mathbf{7}, 6, \mathbf{3} \right]  & (2, 6, 8) & (6, 4) \\
 \left[ 1, \mathbf{4}, 8, 5, 2, 7, \mathbf{6}, \mathbf{3} \right]  & (2, 7, 8) & (6, 5) \\
 \left[ 1, 4, 8, \mathbf{5}, 2, \mathbf{7}, 6, \mathbf{3} \right]  & (4, 6, 8) & (4, 2) \\
 \left[ 1, 4, 8, \mathbf{5}, 2, 7, \mathbf{6}, \mathbf{3} \right]  & (4, 7, 8) & (4, 3) \\  \hline
\end{array}
\end{equation*}
Here, the instance $(2, 3, 5)$ is minimal under $w$.  Additionally, the instance $(4, 6, 8)$ is locally minimal, since it is the minimal instance of $[231]$ in the partial permutation 
$\left[ \mathbf{5}, 2, \mathbf{7}, 6, \mathbf{3} \right]$.
\end{example}

For certain width systems, minimality provides a natural factorization of $x$ over $\sigma$.

\begin{example}
\label{ex:231bountiful}

We consider the width system for the pattern $[231]$ depicted in Figure~\ref{fig.minimal231}.

\begin{figure}
  \begin{center}
  \includegraphics[scale=1]{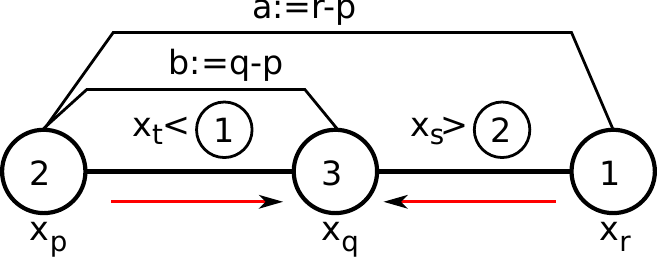}
  \end{center}
  \caption{A diagram of a minimal $[231]$ pattern.  The circled numbers represent elements $(x_p, x_q, x_r)$ filling the roles of the pattern; the widths are denoted $a$ and $b$, and the restrictions on $x_t$ with $p<t<q$ and $x_s$ with $s<q<r$ implied by minimality of the pair $(a,b)$ are also recorded.  The red arrows record the fact that shifting the end elements towards the center using a sequence of simple transpositions reduces the length of the permutation.}
  \label{fig.minimal231}
\end{figure}

Let $x=[x_1,x_2,\ldots,x_N] \in S_N$ containing a $[231]$-pattern, and let $(p<q<r)$ be the indices of a minimal-width $[231]$-pattern in $x$ under the width system $w=(r-p, q-p)$.  (So $x_r<x_p<x_q$.)

Minimality of the total width $(r-p)$ implies that for every $s$ with $q<s<r$, we have $x_s>x_p(>x_r)$, as otherwise $(x_p, x_q, x_s)$ would be a $[231]$-pattern of smaller width.  Then multiplying $x$ on the right by $u_1=s_{r-1}s_{r-2}\ldots s_{q+1}$ yields a permutation of length $\len(x)-(r-q-1)$, with 
\[
xu_1=[x_1,\ldots,x_p,\ldots,x_q,x_r,x_{q+1}\ldots,x_N].
\]

Minimality of the inner width $(q-p)$ implies that for every $t$ with $p<t<q$, then $x_t<x_r$.  (If $x_p<x_t<x_q$, then $(x_t, x_q, x_r)$ would form a $[231]$-pattern of lower width.  If $x_p>x_t$, then $q$ was not chosen minimally.)  Then multiplying $xu_1$ on the right by $u_2=s_ps_{p+1}\ldots s_{q-2}$ yields a permutation of length $\len(xu_1)-(q-p-1) = \len(x) - r + p + 2)$.  This permutation is:
\[
xu_1u_2=[x_1,\ldots,x_{q-1},x_p,x_q,x_r,x_{q+1}\ldots,x_N].
\]

Since $[x_p, x_q, x_r]$ form a $[231]$-pattern, we may further reduce the length of this permutation by multiplying on the right by $s_{q}s_{q-1}$.  The resulting permutation has no right descents in the set $J:=\{q-1, q\}$.

We then set $y=xu_1u_2s_{q}s_{q-1}$, $\sigma'=s_{q-1}s_{q}$, and $z=(u_1u_2)^{-1}$.  Notice that $z$ has no left descents in $\{q-1, q\}$ by construction, since it preserved the left-to-right order of $x_p, x_q$ and $x_r$.   Then $x=y \sigma' z$ is a factorization of $x$ over $\sigma$.
\end{example}

One may use a similar system of minimal widths to show that any permutation containing a $[321]$-pattern contains a braid, replicating a result of Billey, Jockusch, and Stanley~\cite{BilleyJockuschStanley.1993}.  The corresponding system of widths is depicted in Figure~\ref{fig.minimal321}.
\begin{figure}
  \begin{center}
  \includegraphics[scale=1]{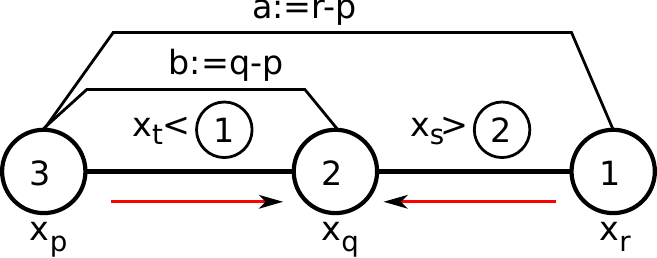}
  \end{center}
  \caption{A diagram of a left-minimal $[321]$ pattern, labeled analogously to the labeling in Figure~\ref{fig.minimal231}.}
  \label{fig.minimal321}
\end{figure}

\begin{definition}
Let $\sigma$ be a permutation with a width system.  The width system is \textbf{bountiful} if for any $x$ containing a locally minimal $\sigma$ at positions $(p_1, \ldots, p_k)$, any $x_t$ with $p_i<t<p_{i+1}$ has either $x_t<x_{p_k}$ for all $p_k<t$ or $x_t>x_{p_k}$ for all $p_k>t$.
\end{definition}

\begin{proposition}
If a pattern $\sigma$ admits a bountiful width system, then any $x$ containing $\sigma$ factorizes over $\sigma$.
\end{proposition}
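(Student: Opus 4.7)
The strategy is to extend the explicit construction of Example~\ref{ex:231bountiful} to an arbitrary pattern admitting a bountiful width system. Suppose $x$ contains $\sigma$, and fix a locally minimal instance $P=(p_1,\ldots,p_k)$ with respect to the given width system. For each non-pattern position $t$ with $p_1<t<p_k$, the bountiful hypothesis dichotomizes $t$ as a \emph{left-slider}, meaning $x_t<x_{p_j}$ for every pattern position $p_j<t$, or as a \emph{right-slider}, meaning $x_t>x_{p_j}$ for every pattern position $p_j>t$; when both alternatives hold, commit to either.

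I would then construct a word $w$ of simple transpositions whose right multiplication on $x$ slides each left-slider leftward past every pattern element originally to its left, and each right-slider rightward past every pattern element originally to its right. By the bountiful inequalities, every elementary swap of a non-pattern element past a pattern element reduces length. The remaining elementary swaps needed are between a right-slider at position $t_1$ and a left-slider at position $t_2>t_1$ separated by some pattern position $p_j$; bountiful then forces $x_{t_1}>x_{p_j}>x_{t_2}$, so these swaps are also length-reducing. Hence $w$ is reduced, $v:=xw$ has length $\len(x)-\len(w)$, and the pattern elements in $v$ occupy a consecutive block of positions $\{j+1,\ldots,j+k\}$ in the same relative order as $\sigma$. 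Letting $\sigma'$ be the $J$-shift of $\sigma$ acting on this block and right-multiplying $v$ by a reduced word for $(\sigma')^{-1}$ sorts the block's values into increasing order with every intermediate step length-reducing, producing $y:=v(\sigma')^{-1}$ with $\len(v)=\len(y)+\len(\sigma')$. Setting $z:=w^{-1}$ then yields the factorization $x=y\sigma'z$ with $\len(x)=\len(y)+\len(\sigma')+\len(z)$.

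The final step is to verify the order conditions on $y$ and $z$ demanded by the definition of factorization. By construction $z$ carries the new consecutive positions $j+1,\ldots,j+k$ back to the original pattern positions $p_1<\cdots<p_k$, so $z$ is increasing on this block; the analogous statement for $y$ holds because the final sort aligns $y$'s values on the block in increasing order. The principal obstacle is organizing the sliding moves into a single coherent reduced word: the bountiful dichotomy is exactly the hypothesis that guarantees \emph{every} elementary swap encountered during the procedure, whether pattern--nonpattern or nonpattern--nonpattern, is genuinely length-reducing, which is why a reduced word exists and the factorization goes through.
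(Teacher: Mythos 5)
Your strategy is the same as the paper's: slide each non-pattern element out of the span (small elements left, large elements right), observe that the bountiful dichotomy makes these slides length-reducing, assemble the sliding moves into a reduced word $w$, set $z=w^{-1}$, and peel off a $J$-shift $\sigma'$ from $v=xw$. The paper's own proof is terse (it defers to the $[231]$ example), and you supply welcome detail, in particular noticing that one must check the swaps in which two \emph{non-pattern} elements cross.

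There is a gap precisely at that point. You assert that every such crossing is ``between a right-slider at position $t_1$ and a left-slider at position $t_2>t_1$ separated by some pattern position $p_j$,'' and then use the chain $x_{t_1}>x_{p_j}>x_{t_2}$. But nothing forces a pattern position to sit between $t_1$ and $t_2$: if $t_1<t_2$ both lie in the same gap $(p_i,p_{i+1})$, with $t_1$ committed right and $t_2$ committed left, there is no intervening $p_j$, so the inequality chain you invoke has no middle term. Worse, ``commit to either'' for elements satisfying both slider conditions genuinely allows a bad choice: if $t_1$ and $t_2$ are both both-type and one commits $t_1$ right and $t_2$ left, the bountiful inequalities only place $x_{t_1}$ and $x_{t_2}$ in the same open interval $\bigl(\max_{m\geq i+1} x_{p_m},\,\min_{m\leq i} x_{p_m}\bigr)$ and give no control on their relative order, so the crossing need not be length-reducing.

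The repair is short. Commit all both-type elements consistently (say, all to the left). Then a same-gap crossing forces $t_1$ to be a right-slider that is \emph{not} a left-slider, so there is some pattern position $p_m<t_1$ with $x_{p_m}<x_{t_1}$; since $t_2>t_1>p_m$ is a left-slider, $x_{t_2}<x_{p_m}<x_{t_1}$, and the crossing is length-reducing after all. (Symmetrically if $t_2$ is left-only.) So the result stands, but via a witness $p_m$ \emph{outside} the $(t_1,t_2)$ interval rather than between them; your argument as written would fail on a same-gap crossing.
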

\begin{proof}
By definition, any $x_t$ with $p_i<t<p_{i+1}$ has either $x_t<x_{p_k}$ for all $p_k<t$ or $x_t>x_{p_k}$ for all $p_k>t$.  Then using methods exactly as in Example~\ref{ex:231bountiful}, we may vacate the elements $x_t$ by multiplying on the right by simple transpositions, moving ``small'' $x_t$ out to the left and moving ``large'' $x_t$ out to the right.  This brings the minimal instance of the pattern $\sigma$ together into adjacent positions $(j, j+1, \ldots, j+k)$, while simultaneously creating a reduced word for the right factor $z$ in the factorization.  Then we set $J=\{j, j+1, \ldots, j+k-1\}$, and let $\sigma'$ be the $J$-shift of $\sigma$.  Set $y=x z^{-1} \sigma'^{-1}$.  Then by construction $x=y \sigma' z$ is a factorization of $x$ over $\sigma$.
\end{proof}

Thus, establishing bountiful width systems allows the direct factorization of $x$ containing $\sigma$ as an element of $W^J \sigma' \leftexp{J}{W}$.

\begin{problem}
Characterize the patterns which admit bountiful width systems.
\end{problem}

\begin{example}
\label{ex:123broken}
The permutation $x = [1324] = s_2$ contains a $[123]$-pattern, but does not factor over $[123]$.  To factor over $[123]$, we have $x\in W^J 1_J \leftexp{J}{W}$, with $J=\{1,2\}$ or $J=\{2,3\}$.  Both choices for $J$ contain $2$, so it is impossible to write $x$ as such a product.
\end{example}

\begin{proposition}
\label{prop:s2widthSystems}
Both patterns in $S_2$ admit bountiful width systems.
\end{proposition}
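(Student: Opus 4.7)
The plan is to exhibit, for each of the two patterns $[12]$ and $[21]$ in $S_2$, the obvious width system $w(P) = (P_2 - P_1)$ assigning to an instance $P = (p,q)$ its sole available width $q-p$, and to verify that any locally minimal instance under this system is automatically adjacent, so that the bountiful condition is satisfied vacuously.

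Concretely, let $\sigma \in \{[12],[21]\}$ and suppose $(p,q)$ is a locally minimal instance of $\sigma$ in some $x \in S_N$. I claim $q = p+1$. Suppose for contradiction that $q > p+1$, and consider the position $t = p+1$. Both pairs $(p, t)$ and $(t, q)$ lie in the index range $\{p, p+1, \ldots, q\}$ and have width strictly less than $q-p$. Local minimality of $(p,q)$ forces neither of these pairs to be an instance of $\sigma$. If $\sigma = [12]$, then failure of $[12]$ at $(p,t)$ and $(t,q)$ gives $x_p > x_{p+1}$ and $x_{p+1} > x_q$, hence $x_p > x_q$; this contradicts the fact that $(p,q)$ realizes a $[12]$-pattern, i.e.\ $x_p < x_q$. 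The case $\sigma = [21]$ is symmetric, giving $x_p < x_{p+1} < x_q$ against $x_p > x_q$.

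With $q = p+1$ established, there are no indices $t$ with $p_1 < t < p_2$ in any locally minimal instance, so the universally quantified statement defining bountifulness holds trivially. This completes the verification that $w$ is bountiful for both patterns.

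There is essentially no obstacle here: the single-width system contains just enough information to pin down locally minimal instances to adjacent pairs, at which point the bountiful condition is empty. The content of the proposition is really that the definitions have been set up to behave sensibly in the smallest nontrivial case, and the only thing to check is that the lex-minimality argument used repeatedly in Example~\ref{ex:231bountiful} degenerates correctly for $k=2$.
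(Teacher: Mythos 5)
Your proof is correct and takes the same route as the paper, which simply asserts that any minimal $[12]$- or $[21]$-pattern must be adjacent so that the bountiful condition holds vacuously. You supply the short argument (via $t = p+1$ and transitivity) that the paper leaves implicit; the substance is the same.
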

\begin{proof}
Any minimal $[12]$- or $[21]$-pattern must be adjacent, and so the conditions for a bountiful width system hold vacuously.
\end{proof}

\begin{proposition}
\label{prop:s3widthSystems}
All of the patterns in $S_3$ except $[123]$ admit a bountiful width system, as depicted in Figure~\ref{fig.s3widthSystems}.
\end{proposition}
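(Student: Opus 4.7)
The plan is to exhibit bountiful width systems for each of the remaining patterns $[132], [213]$, and $[312]$, since $[231]$ was already handled in Example~\ref{ex:231bountiful} and $[321]$ in the discussion surrounding Figure~\ref{fig.minimal321}. My argument will follow the template of Example~\ref{ex:231bountiful}. For an instance $(p,q,r)$ of $[132]$ (satisfying $x_p<x_r<x_q$), I propose the width system $w(P)=(r-p,\, q-p)$; for an instance of $[213]$ (satisfying $x_q<x_p<x_r$), I propose $w(P)=(r-p,\, r-q)$; and for an instance of $[312]$ (satisfying $x_q<x_r<x_p$), I propose $w(P)=(r-p,\, r-q)$. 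These are the systems depicted in Figure~\ref{fig.s3widthSystems}.

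For each pattern, the verification proceeds by fixing a locally minimal instance $(p,q,r)$ and an intermediate index $t$ (either $p<t<q$ or $q<t<r$), then case-splitting on which of the four value-bands determined by $\{x_p,x_q,x_r\}$ contains $x_t$. In each case I aim for one of two alternatives: either $x_t$ is strictly smaller than every pattern-element at a position $\le t$ (the ``push-left'' branch of the bountiful condition), or $x_t$ is strictly larger than every pattern-element at a position $\ge t$ (the ``push-right'' branch); otherwise, substituting $t$ for one of $p$, $q$, or $r$ produces a strictly smaller instance of $\sigma$ in the lexicographic order on $w$, contradicting local minimality. For example, in the $[132]$ verification with $p<t<q$, all four value-bands for $x_t$ are ruled out by minimality (the cases $x_t<x_r$ are killed by the triple $(t,q,r)$ which lowers the outer width, and the cases $x_t>x_r$ are killed by $(p,t,r)$ which lowers the inner width), forcing $q=p+1$ so the condition holds vacuously; the band analysis for $q<t<r$ then yields the bountiful inequality directly in the surviving cases. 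The other two patterns are handled by parallel reasoning with the appropriate choice of width system.

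The main obstacle is not conceptual but bookkeeping: for each of the three remaining patterns there are roughly eight cases to check (two positions of $t$ times four value-bands for $x_t$), and one must carefully identify, in each ``bad'' case, the specific triple involving $t$ whose width vector is lexicographically smaller. There is some simplification available from the fact that $[132]$, $[213]$, and $[312]$ are related to $[231]$ by the inverse, complement, and reverse symmetries of $S_3$, which act on instances by reindexing and value-reversal; in principle one could transport the $[231]$ argument through these symmetries, but writing the case analysis out directly is cleaner. Finally, the exclusion of $[123]$ is necessary in view of Example~\ref{ex:123broken}: the ascending structure of $[123]$ strands any intermediate-value element $x_t$ between the ``push-left'' and ``push-right'' options, so no width system can ever be bountiful for it.
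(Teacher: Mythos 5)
Your proof is correct and follows the same strategy as the paper: exhibit an explicit width system for each of $[132]$, $[213]$, $[312]$ (appealing to the prior treatments of $[231]$ and $[321]$), and verify bountifulness by a band-by-band analysis of the interstitial values, using lexicographic minimality of the instance to rule out the bad bands. One small caveat: your width system $(r-p,\,r-q)$ for $[213]$ differs from the one $(r-q,\,q-p)$ used in the paper's own proof, which first forces $r-q=1$ before analyzing the remaining gap; both choices are in fact bountiful, so this is a harmless variant, but it means your claim that your systems match Figure~\ref{fig.s3widthSystems} is probably not literally accurate for that pattern.
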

\begin{figure}
  \begin{center}
  \includegraphics[scale=1]{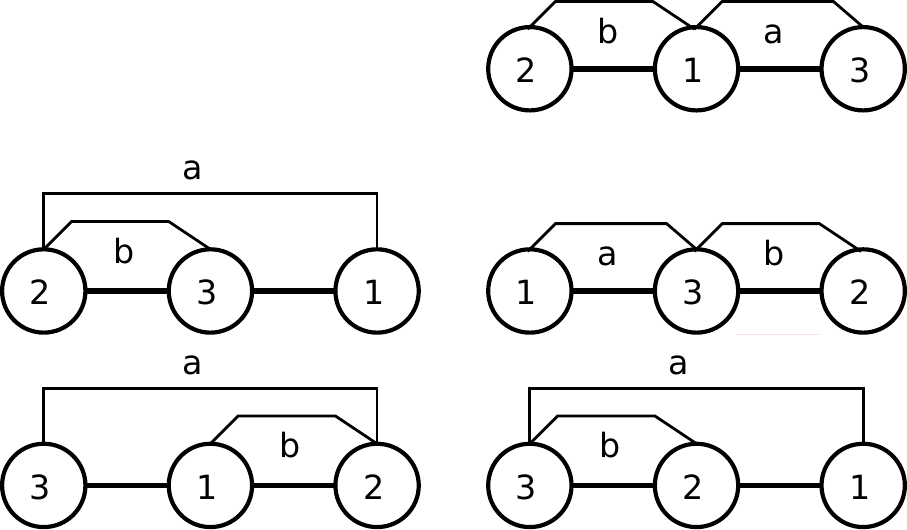}
  \end{center}
  \caption{Diagrams of bountiful width systems for the five patterns in $S_3$ which admit bountiful width systems.}
  \label{fig.s3widthSystems}
\end{figure}

\begin{proof}
A bountiful width systems has already been provided for the pattern $[231]$.  We only provide the details of the proof that the $[213]$ pattern is bountiful, as the proofs that the width systems for the patterns $[132]$, $[312]$ and $[321]$ are bountiful are analogous.

Let $x\in S_N$ contain a $[213]$ pattern at positions $(x_p, x_q, x_r)$, and choose the width system $(a,b)=(r-q, q-p)$.  

Suppose that $(x_p, x_q, x_r)$ is lexicographically minimal in this width system, and consider $x_t$ with $p<t<q$ and $x_s$ with $q<s<r$. Then $a=1$:
\begin{itemize}
\item If $x_s<x_p$, then $(x_p, x_s, x_r)$ is a $[213]$ pattern with $a$ smaller.
\item If $x_p<x_s$, then $(x_p, x_q, x_s)$ is a $[213]$ pattern with $a$ smaller.
\end{itemize}
Thus, we must have $r-q=1$.

Since $b$ is minimal, we must also have that $x_t>x_q$ or $x_t<x_p$ for every $t$ with $p<t<q$.  This completes the proof that the width system is bountiful.
\end{proof}

\begin{proposition}
\label{prop:widthSystemExtend}
Let $\sigma$ be a pattern in $S_{K-1}$ with a bountiful width system, and let $\sigma_+ = [K, \sigma_1, \ldots, \sigma_{K-1}]$. Then $\sigma_+$ admits a bountiful width system.  

Similarly, let $\sigma_-= [\sigma_1+1, \ldots, \sigma_{K-1}+1, 1]$.  Then $\sigma_-$ admits a bountiful width system.
\end{proposition}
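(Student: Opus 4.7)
My plan is to lift the bountiful width system $w$ for $\sigma$ to a bountiful width system for $\sigma_+$ by prepending a single ``outer span'' coordinate. Concretely, define
\[
w_+(p_0, p_1, \ldots, p_{K-1}) \;:=\; \bigl(\,p_{K-1} - p_0,\; w(p_1, p_2, \ldots, p_{K-1})\,\bigr),
\]
with the analogous $w_-(P) := (p_{K-1} - p_0,\; w(p_0, \ldots, p_{K-2}))$ for $\sigma_-$. This directly generalizes the systems $(r-p, q-p)$ for $[231] = [12]_-$ and $[321] = [21]_+$ depicted in Figures~\ref{fig.minimal231} and~\ref{fig.minimal321}; the proof for $\sigma_-$ will be the mirror of that for $\sigma_+$.

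Given a locally $w_+$-minimal instance $P = (p_0, \ldots, p_{K-1})$ and a position $t \in (p_i, p_{i+1})$, I verify the bountiful condition by splitting cases. When $i = 0$ the condition is automatic because $x_{p_0}$ is the unique pattern maximum: $x_t > x_{p_0}$ forces the large clause (since $x_t$ exceeds every later pattern value), while $x_t < x_{p_0}$ forces the small clause (since $x_t$ is smaller than the only earlier pattern value). When $i \geq 1$ and $x_t > x_{p_0}$ the large clause again holds automatically, as $x_t$ exceeds every pattern value. The substantive case is $i \geq 1$ with $x_t < x_{p_0}$.

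In this substantive case the plan is to apply the bountifulness of $w$ to the sub-instance $(p_1, \ldots, p_{K-1})$ viewed inside the partial permutation formed by the positions in $[p_1, p_{K-1}]$ whose values lie strictly below $x_{p_0}$. The minimality hypothesis on $P$ forces this sub-instance to be $w$-minimal within that restricted partial: any smaller competitor $(q_1, \ldots, q_{K-1})$ there has all $x_{q_j} < x_{p_0}$, so prepending $p_0$ produces a genuine $\sigma_+$-instance $(p_0, q_1, \ldots, q_{K-1})$ whose $w_+$-vector $(q_{K-1}-p_0,\, w(q))$ is lex-strictly smaller than $w_+(P)$ (either $q_{K-1} < p_{K-1}$, in which case the outer coordinate drops, or $q_{K-1} = p_{K-1}$ and the inner $w$-vector drops), contradicting local $w_+$-minimality of $P$. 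The $w$-bountiful dichotomy at $t$ for the sub-instance then transfers cleanly to the $w_+$-bountiful dichotomy at $t$: the small clause automatically covers $x_{p_0}$ since $x_t < x_{p_0}$ is already guaranteed, and the large clauses agree verbatim.

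The main obstacle is the legitimate possibility that $[p_1, p_{K-1}]$ contains ``heavy'' positions $s$ with $x_s \geq x_{p_0}$, which may appear in a smaller $\sigma$-sub-instance in the \emph{unrestricted} sub-partial and so prevent the sub-instance from being locally $w$-minimal there. The care needed is (i) to rule out such $s$ occurring in $(p_0, p_1)$ (which would shorten the outer span of $P$ by replacing $p_0$ with $s$), (ii) to observe that these heavy $s$ are precisely the positions removed from the restricted partial used above, so they do not obstruct the minimality reduction, and (iii) to note that every such $s$ itself satisfies the large clause trivially at $t = s$ via $x_s > x_{p_0} > x_{p_k}$ for all $k \geq 1$. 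The proof for $\sigma_-$ is then completed by the symmetric argument with $p_{K-1}$ playing the role of the new minimum and the restriction taken to positions with values strictly above $x_{p_{K-1}}$.
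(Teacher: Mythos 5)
Your proof takes a genuinely different route from the paper's, in two respects. First, you prepend the outer span as the \emph{first} (highest-priority) coordinate of $w_+$, whereas the paper appends it as the \emph{last} coordinate, so that the inherited widths of the sub-instance are minimized first. Both choices do appear to yield bountiful systems for the small examples (e.g.\ $[321]$), but they force different arguments. Second, you handle the ``heavy'' positions $x_t \geq x_{p_0}$ by restricting to the light sub-partial, whereas the paper splits into two cases: if there are no heavy positions, it argues directly in the contiguous window $[i_2, i_{k+1}]$; if there are, it multiplies by simple transpositions to physically move the heavy elements to the right and reduces to the first case. Your case split (by position of $t$ relative to $p_0,p_1$ and by the sign of $x_t - x_{p_0}$) is in some ways cleaner than the paper's (by the presence or absence of heavy positions), and the observation that $i=0$ and ``heavy'' intermediates are trivially bountiful is correct.

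However, there is a genuine gap in the substantive case that you should address. You apply bountifulness of $w$ to the sub-instance viewed inside the restricted partial permutation, which requires the sub-instance to be \emph{locally $w$-minimal inside that restricted partial}. Local minimality, as defined in the paper, means minimality with respect to the width function on the restricted partial regarded as a permutation in its own right --- i.e.\ with positions re-indexed so that the removed heavy positions no longer contribute to any width. But your contradiction argument (``prepending $p_0$ yields a $\sigma_+$-instance whose $w_+$-vector is lex-smaller'') compares $w$-vectors computed from \emph{original} positions in $x$. Because re-indexing subtracts, from each width $P_j - P_i$, the count of heavy positions strictly between $P_i$ and $P_j$ --- a quantity that varies with $(i,j)$ --- an instance that is smaller in re-indexed widths need not be smaller in original widths, and vice versa. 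So the conclusion ``the sub-instance is $w$-minimal within the restricted partial'' does not follow as stated. The paper sidesteps this by first treating the no-heavy case (where the window is contiguous and no re-indexing occurs), then dispatching heavy positions by an explicit (if terse) transposition argument; your version needs an additional lemma controlling how $w$-comparisons behave under passage to the light sub-partial, or a reformulation that avoids re-indexed widths entirely.
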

\begin{proof}
Let $w=(w_1, w_2, \ldots, w_{k-2})$ be a bountiful width system on $\sigma$ (so $w_i$ is the difference between indices of an instance of $\sigma$ in a given permutation).  Let $x$ contain $\sigma_+$ in positions 
$( x_p, \ldots, x_q )$.  For $\sigma_+$, we show that the width system 
$w_+=(w_1, w_2, \ldots, w_{k-2}, q-p)$ is bountiful, where $w_i$ measures widths of elements in $\sigma$ as in $w$.

Consider a $\sigma_+$-pattern in a permutation $x$ that is minimal under the width system $w_+$, appearing at indices given by the tuple $p:=(i_1, \ldots, i_{k+1})$.  Then $x$ contains a $\sigma$-pattern at positions $(i_2, \ldots, i_{k+1})$.  This pattern may not be minimal under $w$ but, by the choice of width system, is as close as possible to being $w$-minimal, in the following sense.

We examine two cases.
\begin{itemize}
\item If there are no indices $t$ with $i_2<t<i_{k+1}$ such that $x_t>x_{i_1}$, then $\sigma$ must be $w$-minimal on the range $i_2, \ldots, i_{k+1}$.  (Otherwise, a $w$-minimal $\sigma$-pattern in that space would extend to a pattern that was less than $p$ in the $w_+$ width system.)  Then bountifulness of the $\sigma$ pattern ensures that for any $t$ with $i_j<t<i_{j+1}$ with $j\geq 2$; then $x_t<x_{i_k}$ for all $i_k<t$ or $x_t>x_{i_k}$ for all $i_k>t$.  (The ``small'' elements are still smaller than the ``large'' element $x_{i_1}$.)



\item On the other hand, if there exist some $t$ with $i_2<t<i_{k+1}$ such that $x_t>x_{i_1}$, we may move these $x_t$ out of the $\sigma$ pattern to the right by a sequence of simple transpositions, each decreasing the length of the permutation by one.  Let $u$ be the product of this sequence of simple transpositions.  Then $xu$ fulfills the previous case.  Each of the $x_t$ were larger than all pattern elements to the right, so we see that $\sigma_+$ fulfills the requirements of a bountiful pattern.

\end{itemize}

The proof that $\sigma_-$ admits a bountiful width system is similar.
\end{proof}

\begin{corollary}
\label{cor:bountifulPerms}
Let $\sigma \in S_K$ be a permutation pattern, where the length of $\sigma$ is at most one less than the length of the long element in $S_K$.  Then $\sigma$ admits a bountiful width system.
\end{corollary}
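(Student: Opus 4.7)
The plan is to proceed by strong induction on $K$. For the base cases $K = 2$ and $K = 3$, Propositions \ref{prop:s2widthSystems} and \ref{prop:s3widthSystems} cover all patterns, noting that the only pattern in $S_2$ or $S_3$ known to lack a bountiful system is the identity $[123] \in S_3$ singled out by Example \ref{ex:123broken}; the length hypothesis is to be read as excluding the long element (and implicitly the identity, in the smallest cases).

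For the inductive step, let $\sigma \in S_K$ with $\len(\sigma) \leq \binom{K}{2} - 1$. The key numerical fact is that $\binom{K}{2} - \binom{K-1}{2} = K - 1$, which is exactly the number of inversions contributed when one prepends $K$ (forming $\sigma_+$) or appends $1$ (forming $\sigma_-$). Consequently, if $\sigma_1 = K$ then $\sigma = \tau_+$ where $\tau$ is the pattern of $(\sigma_2, \ldots, \sigma_K)$ in $S_{K-1}$, and
\begin{equation*}
\len(\tau) \;=\; \len(\sigma) - (K - 1) \;\leq\; \binom{K}{2} - 1 - (K - 1) \;=\; \binom{K-1}{2} - 1,
\end{equation*}
so the inductive hypothesis applies to $\tau$ and Proposition \ref{prop:widthSystemExtend} lifts a bountiful width system on $\tau$ to one on $\sigma$. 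An identical computation handles the case $\sigma_K = 1$ via $\sigma = \tau_-$ and the second half of Proposition \ref{prop:widthSystemExtend}.

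The main obstacle is the remaining case in which neither $\sigma_1 = K$ nor $\sigma_K = 1$, so that both extremal values sit in interior positions (for instance $[2143]$ or $[3142]$ in $S_4$). Here I would invoke Proposition \ref{prop:widthSystemExtend2}, which must provide a more flexible extension construction adapted to such interior extremal entries. The bound $\len(\sigma) \leq \binom{K}{2} - 1$ should again be precisely what is needed to ensure that the smaller pattern in $S_{K-1}$ obtained from the construction still satisfies the inductive length hypothesis; verifying this transfer of the length bound through the more general extension of Proposition \ref{prop:widthSystemExtend2} is the key step that closes the induction.
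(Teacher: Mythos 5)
The proposal misreads the hypothesis, and the misreading is fatal. The phrase ``the length of $\sigma$ is at most one less than the length of the long element'' means the \emph{deficit} $\len(w_0)-\len(\sigma)$ is at most one, i.e.\ $\len(\sigma)\geq\binom{K}{2}-1$; you have read it as $\len(\sigma)\leq\binom{K}{2}-1$. Your reading makes the corollary false: $[123]\in S_3$ has length $0\leq 2$, yet it admits no bountiful width system, since by the proposition following the definition of bountifulness a bountiful width system would force every permutation containing $[123]$ to factorize over $[123]$, and Example~\ref{ex:123broken} exhibits $[1324]$ as a counterexample. You notice this tension yourself and try to paper over it (``implicitly the identity, in the smallest cases''), but that is a signal that the hypothesis has been inverted, not that it needs an unstated carve-out.

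Once the inequality is corrected, the gap in your argument disappears and the paper's one-line proof becomes transparent. A permutation $\sigma\in S_K$ with $\len(\sigma)\geq\binom{K}{2}-1$ is either the long element $w_0$ or $w_0 s_i$ for some $i$; in every such case one has $\sigma_1=K$ or $\sigma_K=1$ (swapping $w_0$ at positions $i,i+1$ disturbs position $1$ only when $i=1$ and position $K$ only when $i=K-1$, never both). So the ``remaining case'' you flag as the main obstacle never arises, and there is no need to invoke Proposition~\ref{prop:widthSystemExtend2} at all. Your numerical observation that removing a leading $K$ or trailing $1$ drops the length by exactly $K-1=\binom{K}{2}-\binom{K-1}{2}$ is correct and is precisely the point, but it propagates the bound in the direction $\len(\tau)\geq\binom{K-1}{2}-1$, keeping $\tau$ inside the corollary's hypothesis at rank $K-1$. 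The paper's proof is exactly this induction via Proposition~\ref{prop:widthSystemExtend} down to the base patterns $[12],[21]\in S_2$; your speculative appeal to Proposition~\ref{prop:widthSystemExtend2} for interior extremal entries is both unnecessary and, as you concede, unverified.
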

\begin{proof}
This follows inductively from Proposition~\ref{prop:widthSystemExtend}, and the fact that the patterns $[12]$ and $[21]$ both admit bountiful width systems.
\end{proof}
\begin{figure}
  \begin{center}
  \includegraphics[scale=1]{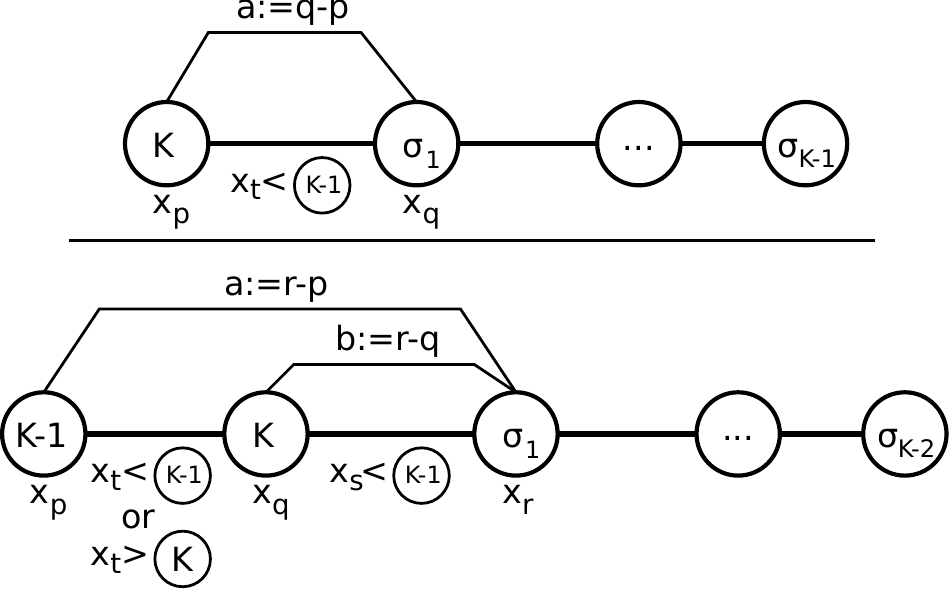}
  \end{center}
  \caption{Diagram of extensions of a bountiful width system $w$ by the additional widths $a$ or $(a, b)$, as described in the proofs of Propositions~\ref{prop:widthSystemExtend} and~\ref{prop:widthSystemExtend2}. }
  \label{fig.widthSystemExtend}
\end{figure}

\begin{proposition}
\label{prop:widthSystemExtend2}
Let $\sigma$ be a pattern in $S_{K-2}$ with a bountiful width system, and let $\sigma_{++} = [K-1, K, \sigma_1, \ldots, \sigma_{K-1}]$. Then $\sigma_{++}$ admits a bountiful width system.  

Similarly, let $\sigma_{--}= [\sigma_1+2, \ldots, \sigma_{K-2}+2, 1, 2]$.  Then $\sigma_{--}$ admits a bountiful width system.
\end{proposition}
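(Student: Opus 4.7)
The plan is to mirror the proof of Proposition~\ref{prop:widthSystemExtend}, now prepending two large entries (or appending two small entries) instead of one, and to extend the width system by two coordinates rather than one. Concretely, for $\sigma_{++}$ I would define $w_{++}=(w_1,\ldots,w_{K-3},\,a,\,b)$, where $(w_1,\ldots,w_{K-3})$ is the width system $w$ applied to the trailing $\sigma$-sub-instance, $a=i_K-i_2$ measures the total span of the role of $K$ to the end of the pattern, and $b=i_2-i_1$ measures the gap between the two prepended roles. (See Figure~\ref{fig.widthSystemExtend}.) Placing the $w$ coordinates first in lexicographic order ensures that, after fixing the two prepended roles, the embedded $\sigma$ is as close to $w$-minimal as the presence of the large elements allows.

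Next I would suppose $x$ contains $\sigma_{++}$ at indices $(i_1,i_2,i_3,\ldots,i_K)$ with $w_{++}$ lexicographically minimal, and split into two cases exactly as in Proposition~\ref{prop:widthSystemExtend}. In the first case, no index $t$ with $i_3<t<i_K$ satisfies $x_t>x_{i_1}$ (equivalently, $x_t$ is not above either prepended role); then the sub-pattern on $(i_3,\ldots,i_K)$ is genuinely $w$-minimal, and bountifulness of $w$ delivers the required dichotomy for intermediate elements lying between consecutive $\sigma$-positions. In the second case, each offending $x_t$ with $x_t>x_{i_1}$ is larger than every pattern element lying to its right, so one can move it out to the right by simple transpositions, each reducing length, reducing to the first case.

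For the two new interior gaps I would use minimality of $a$ and $b$ directly. An $x_t$ with $i_1<t<i_2$ cannot exceed $x_{i_1}$ or $x_{i_2}$ (otherwise replacing the appropriate prepended role by $x_t$ would strictly decrease $b$, contradicting lex-minimality of $w_{++}$), hence $x_t$ is smaller than both prepended roles, while being smaller than $x_{i_2},\ldots,x_{i_K}$ is already automatic or, where it fails, is handled by the transposition reduction above; one checks this satisfies the bountiful dichotomy. Similarly, any $x_t$ with $i_2<t<i_3$ that exceeds $x_{i_1}$ violates minimality of $a$ unless it can be slid right past the tail, again reducing to a configuration where intermediate elements obey the bountiful condition. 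The proof for $\sigma_{--}$ is strictly dual, reflecting the pattern vertically and horizontally.

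The main obstacle is bookkeeping: one must verify simultaneously that the two extra coordinates $a$ and $b$ behave independently of the $w$-coordinates under lex-minimality, and that the sliding argument in the second case does not spoil the gap structure that minimality of $a$ and $b$ imposes. Once the case split from Proposition~\ref{prop:widthSystemExtend} is set up carefully, no genuinely new idea is required; Corollary~\ref{cor:bountifulPerms} already shows that the argument scales well under iteration, and the doubled prepending here is the key ingredient missing for extending that corollary further.
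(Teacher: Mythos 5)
Your approach is the same as the paper's: extend $w$ by two trailing coordinates with $w$ placed first in the lexicographic order, reuse the two-case split of Proposition~\ref{prop:widthSystemExtend} for the embedded $\sigma$-sub-instance, and use minimality of the extra coordinates to control the two new gaps. Your specific choice $(a,b)=(i_K-i_2,\ i_2-i_1)$ differs from the paper's, which (in its notation $p<r<s<\cdots<q$ for the positions) takes $(q-p,\ s-r)$, i.e.\ the total span and the gap between the $K$-role and the first $\sigma$-position; both choices work.

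One small repair is needed in your gap-one argument. Under your coordinates an $x_t$ with $i_1<t<i_2$ \emph{can} exceed $x_{i_2}$: replacing the $K-1$ role by $t$ requires $x_t<x_{i_2}$, and replacing the $K$ role by $t$ decreases $b$ but \emph{increases} $a$, which comes earlier in lex order, so neither replacement contradicts minimality. The case is harmless, though, since such an $x_t$ exceeds the pattern maximum and hence exceeds every pattern entry to its right, landing in the other branch of the bountiful dichotomy; minimality of $b$ only rules out $x_{i_1}<x_t<x_{i_2}$. Relatedly, your ``unless it can be slid right past the tail'' hedge for $i_2<t<i_3$ is unnecessary: any $x_t>x_{i_1}$ there yields the competing instance $(i_1,t,i_3,\ldots,i_K)$ with identical $w$-part and strictly smaller $a$, so no such $t$ exists at all. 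With these two spots tightened the argument is correct and runs parallel to the paper's.
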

\begin{proof}
The proof of this proposition closely mirrors the proof of Proposition~\ref{prop:widthSystemExtend}.  Let $w=(w_1, w_2, \ldots, w_{k-2})$ a bountiful width system on $\sigma$.  Let $x$ contain $\sigma_{++}$ in positions 
$( x_p, x_r, x_s, \ldots, x_q )$.  For $\sigma_{++}$, we claim that the width system 
$w_{++}=(w_1, w_2, \ldots, w_{k-2}, q-p, s-r)$ is bountiful, where $w_i$ measures widths of elements in $\sigma$ as in $w$.  (The width system $w_{++}$ is depicted in Figure~\ref{fig.widthSystemExtend}.)

Again, local minimality of $\sigma$ ensures that all $x_t$ with $s<t<q$ with $x_t$ not in the instance of $\sigma_{++}$ are either smaller than all pattern elements to the left of $x_t$, or larger than all pattern elements to the right of $x_t$.  The choice of $w_{++}$ ensures that all $x_t$ with $p<t<r$ are either less than $x_p$ or larger than $x_r$, and that all $x_t$ with $r<t<s$ are less than $x_p$.  Then $w_{++}$ is bountiful.

The proof that $\sigma_{--}$ is bountiful is analogous.
\end{proof}

\subsection{Further Directions}
Preliminary investigation suggests that patterns admitting a bountiful width system are somewhat rare, though there are more than those described by Corollary~\ref{cor:bountifulPerms}.  Weakening the definition of a factorization over a permutation may provide an additional avenue of investigation, though.

\begin{definition}
A permutation $x\in W=S_N$ \textbf{left-factorizes} over a pattern $\sigma\in S_K$ if $x=y\sigma' z$ with:
\begin{itemize}
\item $\sigma' \in W_J$, with $J=\{j, j+1, \ldots, j+k\}$ and $\sigma'$ containing a $\sigma$-pattern,
\item $y \in W^J$,
\item $\len(x)=\len(y) + \len(\sigma) + \len(z)$.
\end{itemize}
\end{definition}

This definition drops the requirement that $z \in \leftexp{J}{W}$.  This definition may be too weak, though, since one can show that any permutation containing the pattern $[K, K-1, \ldots, 1]$ left-factors over every pattern in $S_K$.

On the other hand, consider Example~\ref{ex:123broken}.  The permutation $x=[1, 3, 2, 4]=s_2$ admits a factorization $S^{\{1,3\}} 1_{\{1,3\}} \leftexp{\{1,3\}}{S}$, and the element $1_{\{1,3\}}$ contains a $[123]$-pattern.  Allowing factorizations over arbitrary subgroups -- and obtaining a combinatorial characterization of these factorizations -- may provide a way forward.

\begin{problem}
Find a general characterization of pattern containment in terms of factorizations of a permutation.
\end{problem}

\section{Pattern Avoidance and the $\NDPF$ Quotient}
\label{sec:ndpfPattAvoid}

In this section, we consider certain quotients of the $0$-Hecke monoid of the symmetric group, and relate the fibers of the quotient to pattern-avoidance.  The $0$-Hecke monoid $H_0(S_N)$ is defined in Definition~\ref{ssec:zeroHeckeDefinition}, and the Non-decreasing Parking Function $\NDPF_N$ quotient is discussed in Section~\ref{ssec:ndpf}, in its guise as the the monoid of order-preserving regressive functions on a chain.

\begin{definition}
For $x\in H_0(S_N)$, we say $x$ {\bf contains a braid} if some reduced word for $x$ contains a contiguous subword $\pi_i \pi_{i+1} \pi_i$.  

The permutation $x$ contains an {\bf unmatched ascent} if some reduced word for $x$ contains a contiguous subword $\pi_i \pi_{i+1}$ that is not part of a braid.  More precisely, if inserting a $\pi_i$ directly after the $\pi_i \pi_{i+1}$ increases the length of $x$, then $x$ contains an unmatched ascent.  Equivalently, $x$ may be factorized as $x=y \pi_i \pi_{i+1} z$, where $y$ has no right descents in $\{i, i+1\}$, and $z$ has no left descents in $\{i, i+1\}$, and $\len(x)=\len(y)+2+\len(z)$.

An {\bf unmatched descent} is analogously defined as a contiguous subword $\pi_{i+1}\pi_i$ such that insertion of a $\pi_i$ immediately before this subword increases the length of $x$.  Equivalently, $x$ may be factorized as $x=y \pi_{i+1}\pi_i z$, where $y$ has no right descents in $\{i, i+1\}$, and $z$ has no left descents in $\{i, i+1\}$, and $\len(x)=\len(y)+2+\len(z)$.
\end{definition}

\begin{lemma}
\label{lemma:231unmatched}
For $x\in S_N$, $x$ contains a $[231]$-pattern if and only if $x$ has an unmatched ascent.  Likewise, $x$ contains a $[312]$-pattern if and only if $x$ has an unmatched descent.
\end{lemma}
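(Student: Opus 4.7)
The plan is to prove each equivalence by its two implications separately; the $[312]$/unmatched-descent half is handled in direct parallel with the $[231]$/unmatched-ascent half, so I concentrate on the latter, which combines a short positional calculation with an appeal to the bountifulness machinery already developed.

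For the implication ``unmatched ascent implies contains $[231]$,'' I would unpack the factorization $x = y \pi_i \pi_{i+1} z$ directly. The descent conditions on $y$ and $z$ translate, in one-line notation, to $y(i) < y(i+1) < y(i+2)$ and $z^{-1}(i) < z^{-1}(i+1) < z^{-1}(i+2)$, respectively. Since the permutation $s_i s_{i+1}$ sends $i \mapsto i+1$, $i+1 \mapsto i+2$, $i+2 \mapsto i$, I set $p_k := z^{-1}(i+k-1)$ for $k=1,2,3$, observe $p_1 < p_2 < p_3$, and compute $(x(p_1), x(p_2), x(p_3)) = (y(i+1), y(i+2), y(i))$, whose relative order is $[231]$. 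For the converse, I invoke Proposition~\ref{prop:s3widthSystems}: since $[231]$ admits a bountiful width system, the construction recorded in Example~\ref{ex:231bountiful}, applied to a locally minimal $[231]$-instance at positions $(p,q,r)$, produces a factorization $x = y\,s_{q-1}s_q\,z$ with $y$ having no right descents in $\{q-1,q\}$, $z$ having no left descents in $\{q-1,q\}$, and the three lengths adding. Setting $i := q-1$, these data are precisely those defining an unmatched ascent at $i$.

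The $[312]$ case follows the same template. Proposition~\ref{prop:s3widthSystems} also supplies bountifulness for $[312]$, so its mirror construction (with the roles of ``small'' and ``large'' entries flanking the locally minimal instance swapped) yields an unmatched-descent factorization $x = y\,s_{i+1} s_i\,z$. Conversely, using that $s_{i+1}s_i$ sends $i \mapsto i+2$, $i+1 \mapsto i$, $i+2 \mapsto i+1$, the same choice $p_k = z^{-1}(i+k-1)$ gives $(x(p_1),x(p_2),x(p_3)) = (y(i+2), y(i), y(i+1))$, which is a $[312]$ pattern.

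The principal obstacle is convention-juggling: I need to verify that the descent conditions on $y$ and $z$ in the unmatched ascent/descent definition match exactly those produced by the bountifulness construction (which is phrased using the parabolic sets $J=\{i,i+1\}$ in Example~\ref{ex:231bountiful}, not the larger $J$ appearing in the definition of factorization over a pattern), and that the three-value composition is carried out with a consistent convention for right-vs.-left action of the symmetric group. Once these bookkeeping points are fixed, the lemma reduces to a short direct calculation in one direction and a packaged invocation of Proposition~\ref{prop:s3widthSystems} in the other.
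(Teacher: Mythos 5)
Your argument is correct and matches the paper's approach. The hard direction (pattern implies unmatched ascent/descent) is handled in both cases by invoking the bountiful width system from Proposition~\ref{prop:s3widthSystems} via the factorization constructed in Example~\ref{ex:231bountiful}; the paper's proof leaves the easy converse (unmatched ascent/descent implies the pattern) implicit, appealing tacitly to the earlier observation that any factorization $y\sigma'z$ with the stated descent conditions produces a $\sigma$-instance, whereas you spell it out with the explicit computation $(x(p_1),x(p_2),x(p_3)) = (y(i+1),y(i+2),y(i))$ — a welcome sharpening but not a different route.
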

\begin{proof}
This is a straightforward application of the bountiful width system for the patterns $[231]$ and $[312]$.  The resulting factorization contains an unmatched ascent (resp., descent).
\end{proof}

This process of inserting an $s_i$ can be made more precise in the symmetric group setting: suppose $s_{j_1}\ldots s_{i}s_{i+1}\ldots s_{j_k}$ is a reduced expression for $x \in S_N$.  Then write $x=x_1 s_{i}s_{i+1} x_2$.  To insert $s_i$, multiply $x$ on the right by $x_2^{-1}s_ix_2$.  As such, this insertion can be realized as multiplication by some reflection.

This insertion is generally not a valid operation in $H_0(S_N)$, since inverses do not exist.  However, the operation does make sense in the $\NDPF$ setting: the $\NDPF$ relation simply allows one to exchange a braid for an unmatched ascent or vice-versa.

\begin{theorem}
\label{thm:ndpfFibers231}
Each fiber of the map $\phi: H_0(S_N)\to \NDPF_N$ contains a unique $[321]$-avoiding element of minimal length and a unique $[231]$-avoiding element of maximal length.
\end{theorem}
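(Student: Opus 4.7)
The plan is to handle the two halves of the theorem symmetrically, exploiting the fact that the $\NDPF$ quotient is generated, beyond the Coxeter relations of $H_0(S_N)$, by the single relation $\pi_i\pi_{i+1}\pi_i = \pi_{i+1}\pi_i$. This relation lets one shorten a braid in a reduced word by one letter and, read the other way, extend a suitable two-letter subword back into a braid, while staying inside a fiber of $\phi$. I would combine this navigation inside fibers with two external facts: the Billey--Jockusch--Stanley characterization of $[321]$-avoidance as braid-avoidance (reproved via the bountiful width system for $[321]$ in Proposition~\ref{prop:s3widthSystems}), and Lemma~\ref{lemma:231unmatched}, which characterizes $[231]$-avoidance as the absence of an unmatched ascent. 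Uniqueness in each half will then be extracted from the Catalan coincidence that $|\NDPF_N|$, the number of $[321]$-avoiders in $S_N$, and the number of $[231]$-avoiders in $S_N$ all equal $C_N$.

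For the unique $[321]$-avoiding element of minimal length, I would pick any $x$ in a fiber $F$. If $x$ contains a $[321]$-pattern, then by the bountiful width system some reduced word for $x$ contains a braid, and one application of the $\NDPF$ relation produces $x' \in F$ with $\len(x') = \len(x)-1$. Iterating terminates at a braid-free, hence $[321]$-avoiding, element $x^{\min} \in F$. This $x^{\min}$ is of strictly minimal length in $F$ because no Coxeter move inside $H_0(S_N)$ changes length, and the only length-changing move available in the fiber, the $\NDPF$ relation, requires a braid that $x^{\min}$ no longer has. Existence of such a representative in every fiber gives a surjection from $[321]$-avoiders onto $\NDPF_N$; since both sets have cardinality $C_N$, the surjection is forced to be a bijection, which pins down uniqueness of $x^{\min}$ in $F$.

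For the unique $[231]$-avoiding element of maximal length, the argument runs dually. If $x \in F$ contains a $[231]$-pattern, Lemma~\ref{lemma:231unmatched} yields a factorization $x = y\pi_i\pi_{i+1}z$ with $y$ having no right descent in $\{i,i+1\}$, $z$ having no left descent in $\{i,i+1\}$, and $\len(x) = \len(y)+2+\len(z)$. I would then form the longer candidate $x' = y\pi_i\pi_{i+1}\pi_i z$ of length $\len(x)+1$ and invoke the $\NDPF$ relation to argue $\phi(x') = \phi(x)$, placing $x'$ in the same fiber $F$. Iterating on unmatched ascents terminates in an element $x^{\max} \in F$ with no unmatched ascent, hence $[231]$-avoiding and of maximal length in $F$. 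Uniqueness then follows from the identical counting argument, using $|\{[231]\text{-avoiders in } S_N\}| = |\NDPF_N| = C_N$.

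The main technical obstacle will be the extension step in the maximum argument: verifying $\phi(y\pi_i\pi_{i+1}z) = \phi(y\pi_i\pi_{i+1}\pi_i z)$ in $\NDPF_N$ is not immediate, since the presentation does not directly identify $\pi_i\pi_{i+1}$ with $\pi_i\pi_{i+1}\pi_i$. Here the descent-free conditions on $y$ and $z$ furnished by Lemma~\ref{lemma:231unmatched}, together with the structure of the bountiful width system for $[231]$ from Example~\ref{ex:231bountiful}, will have to be used to route the equality through an explicit sequence of $\NDPF$ moves rather than a single application. The minimum half, by contrast, is essentially a clean length-reduction combined with the Catalan counting argument, and I expect it to go through with no real surprises.
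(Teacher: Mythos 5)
Your plan tracks the paper's own proof very closely in both halves: iterate braid removal to reach a $[321]$-avoider, iterate ascent-extension to reach a $[231]$-avoider, and close uniqueness with the Catalan coincidence $|\NDPF_N| = |\{[321]\text{-avoiders}\}| = |\{[231]\text{-avoiders}\}| = C_N$. That is exactly the published argument.

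The place you diverge is in treating $\phi(y\pi_i\pi_{i+1}z) = \phi(y\pi_i\pi_{i+1}\pi_i z)$ as a hard ``technical obstacle'' requiring some multi-step chain of $\NDPF$ moves. You are right to be suspicious: as literally displayed in Section~\ref{ssec:ndpf}, the relation $\pi_i\pi_{i+1}\pi_i = \pi_{i+1}\pi_i$ removes the \emph{leading} $\pi_i$ from a braid and so identifies the braid with $\pi_{i+1}\pi_i$ (a $[312]$-shaped subword), not with $\pi_i\pi_{i+1}$ (the $[231]$-shaped one), and indeed $\pi_i\pi_{i+1} \neq \pi_{i+1}\pi_i$ in $\NDPF_N$. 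But the orientation of the relation that Section~\ref{sec:ndpfPattAvoid} actually invokes is the mirror one, $\pi_i\pi_{i+1}\pi_i = \pi_i\pi_{i+1}$ (braid $\leftrightarrow$ unmatched ascent); the two orientations are conjugate under the Dynkin involution $\Psi$, which is precisely the asymmetry exploited to get Theorem~\ref{thm:ndpfFibers312} from Theorem~\ref{thm:ndpfFibers231}. With that orientation fixed, the insertion is a \emph{single} application of the relation; the descent-free conditions on $y$ and $z$ from Lemma~\ref{lemma:231unmatched} are not needed to justify membership in the fiber but only to ensure $\len(y\pi_i\pi_{i+1}\pi_i z) = \len(x)+1$, so the process strictly increases length and terminates. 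The paper packages exactly this as Corollary~\ref{cor:fiber231} (applying $t_{p,q}$ to a minimally chosen $[231]$-instance stays in the fiber); you should lean on that statement rather than promising a bespoke sequence of moves.

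One smaller point: you assert $x^{\min}$ has minimal length in its fiber because it has no braid and the only length-decreasing move needs one. As stated that only rules out a single step down from $x^{\min}$, not a shorter element elsewhere in the fiber reached by a longer path. The Catalan counting closes this gap automatically (a shorter $[321]$-avoider in the same fiber would violate the bijection), but the minimality claim should be presented as a corollary of uniqueness rather than as a free-standing observation.
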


\begin{proof}
The first part of the theorem follows directly from a result of Billey, Jockusch, and Stanley~\cite{BilleyJockuschStanley.1993}, which states that a symmetric group element contains a braid if and only if the corresponding permutation contains a $[321]$.  Alternatively, one can use the width system for $[321]$ established in Proposition~\ref{prop:s3widthSystems} to obtain a factorization including a braid.  Then for any $x$ in the fiber of $\phi$, one can remove braids obtained from minimal-width $[321]$-patterns using the $\NDPF$ relation and obtain a $[321]$-avoiding element.  Each application of the $\NDPF$-relation reduces the length of the permutation by one, so this process must eventually terminate in a $[321]$-avoiding element.  Uniqueness follows since there are exactly $C_N$ $[321]$-avoiding elements in $S_N$, where $C_N$ is the $N$th Catalan number, and are thus in bijection with elements of $\NDPF_N$.

For the second part, we use the bountiful $[231]$ width system established in Example~\ref{ex:231bountiful}.  Let $x$ contain a $[231]$-pattern.  The width system allows us to write a factorization $x=y \pi_i \pi_{i+1} z$, where $y$ has no right descents in $\{i, i+1\}$ and $z$ has no left descents in $\{i, i+1\}$.  Then we may apply the $\NDPF$ relation to insert a $\pi_i$, turning the $[231]$-pattern into a $[321]$ pattern, and increasing the length of $x$ by one.  Since we are in a finite symmetric group, there is an upper bound on the length one may obtain by this process, and so the process must terminate with a $[231]$-avoiding element.  Recall that $[231]$-avoiding permutations are also counted by the Catalan numbers~\cite{knuth.TAOCP1}, and apply the same reasoning as above to complete the theorem.
\end{proof}

Recall that the \emph{right action} of $S_N$ acts on positions.  A permutation $y$ has a \emph{right descent at position i} if the two consecutive elements $y_i,y_{i+1}$ are out of order in one-line notation.  Then multiplying on the right by $s_i$ puts these two positions back in order and reduces the length of $y$ by one.  Likewise, if $y$ does not have a right descent at $i$, multiplying by $s_i$ increases the length by one.

\begin{example}[Fibers of the $\NDPF$ quotient]
For $S_4$, the fibers of the $\NDPF$ quotient can be found in Figure~\ref{fig.s4ndpfFibers}.  

\begin{figure}
  \begin{center}
  \includegraphics[scale=1]{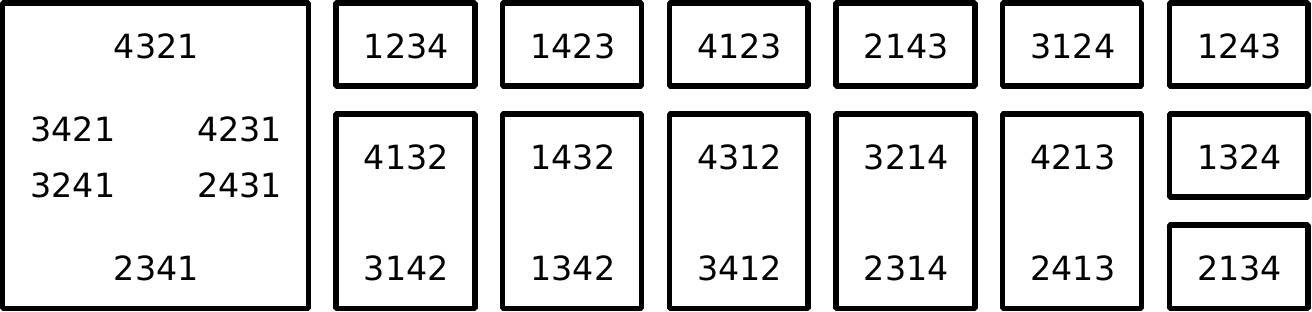}
  \end{center}
  \caption{Fibers of the $\NDPF$ quotient for $H_0(S_4)$.}
  \label{fig.s4ndpfFibers}
\end{figure}
As a larger example, let $\sigma=[3,6,4,5,7,2,1]\in S_7$.  For Lemma~\ref{lemma:231unmatched}, we find minimal-width $[231]$-patterns, with the element corresponding to the $3$ chosen as far to the left as possible.  (The subsequence $(5, 7, 2)$ of $\sigma$ is such a minimal $[231]$-pattern.)  Then applying the transformation $[231]\rightarrow [321]$ on that instance of the pattern preserves the fiber of the $\NDPF$ quotient, and increases the length of the permutation by $1$.  By sequentially removing eight such minimal $[231]$-patterns, one obtains the long element in $S_7$, which is $[231]$-avoiding.  The fiber containing the long element also contains a $[321]$-avoiding element $[2,3,4,5,6,7,1]$, which has length $6$, and is the shortest element in its fiber.
\end{example}

We now fix bountiful width system for $[231]$- and $[321]$-patterns, which we will use for the remainder of this section.
\begin{definition}
Let $x\in S_N$, $x=[x_1, \ldots, x_N]$ in one-line notation, and consider all $[231]$-patterns $(x_p,x_q,x_r)$ in $x$.  The \textbf{width} of a $[231]$-pattern $(x_p,x_q,x_r)$ is the pair $(r-p, q-p)$.  The pattern is a \textbf{minimally chosen $[231]$-pattern} if the width is lexicographically minimal amongst all $[231]$-patterns in $x$.

On the other hand, call a $[321]$-pattern $(x_p,x_q,x_r)$ \textbf{left minimal} if for all $t$ with $p<t<q$, $x_t<x_r$, and for all $s$ with $q<s<r$, $x_s>x_q$.
\end{definition}

The following is a direct result of the proof of Lemma~\ref{lemma:231unmatched}.
\begin{corollary}
\label{cor:fiber231}
Let $x\in S_N$.  Let $(x_p,x_q,x_r)$ be a minimally chosen $[231]$-pattern in $x$.  Then the permutation 
\[
[x_1, \ldots, x_{p-1}, x_q, x_{p+1}, \ldots, x_{q-1}, x_p, x_{q+1}, \ldots, x_r, \ldots, x_N],
\]
obtained by applying the transposition $t_{p,q}$, is in the same $\NDPF$-fiber as $x$.  The result of applying this transposition is a left-minimal $[321]$-pattern.
\end{corollary}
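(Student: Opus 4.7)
The plan is to unpack the factorization already assembled in Example~\ref{ex:231bountiful} and translate it into the claimed identity of $\NDPF$-images. Set $J = \{q-1, q\}$. From that example one has the reduced product $x = y\, s_{q-1} s_q\, z$ with $z = u_2^{-1} u_1^{-1}$, where $u_1 = s_{r-1} \cdots s_{q+1}$ and $u_2 = s_p \cdots s_{q-2}$, and with $y \in S_N^J$, $z \in \leftexp{J}{S_N}$, and $\len(x) = \len(y) + 2 + \len(z)$.

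First I would verify the conjugation identity $z^{-1} s_{q-1} z = t_{p,q}$. Every simple reflection in $u_1$ has index at least $q+1$, so it commutes both with $s_{q-1}$ and with every factor of $u_2$ (whose indices lie in $\{p, \ldots, q-2\}$); these commutations collapse $u_1 u_2 s_{q-1} u_2^{-1} u_1^{-1}$ to $u_2 s_{q-1} u_2^{-1}$, and the standard expansion $s_p s_{p+1} \cdots s_{q-2} s_{q-1} s_{q-2} \cdots s_{p+1} s_p = t_{p,q}$ finishes the check. Substituting $t_{p,q} = z^{-1} s_{q-1} z$ yields $x \cdot t_{p,q} = y\, s_{q-1} s_q s_{q-1}\, z$, and since $s_{q-1} s_q s_{q-1}$ is the longest element of $W_J$, the descent constraints on $y$ and $z$ force this product to be reduced with $\len(x \cdot t_{p,q}) = \len(x) + 1$.

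Projecting into $\NDPF_N$ one then has $\phi(x) = \pi_y\, \pi_{q-1} \pi_q\, \pi_z$ and $\phi(x \cdot t_{p,q}) = \pi_y\, \pi_{q-1} \pi_q \pi_{q-1}\, \pi_z$. The defining $\NDPF$ relation invoked in the proof of Theorem~\ref{thm:ndpfFibers231} identifies the interior braid $\pi_{q-1}\pi_q\pi_{q-1}$ with its unmatched-ascent counterpart $\pi_{q-1}\pi_q$, so $\phi(x \cdot t_{p,q}) = \phi(x)$ and the two permutations share an $\NDPF$-fiber.

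Finally, the new permutation reads $(x_q, x_p, x_r)$ at positions $(p,q,r)$; since $x_r < x_p < x_q$ this is a $[321]$-pattern. Left-minimality requires that $x_t < x_r$ for every $p < t < q$ and that $x_s > x_p$ for every $q < s < r$, and both requirements are exactly the constraints enforced by bountifulness of the $[231]$ width system in Example~\ref{ex:231bountiful}; they survive $t_{p,q}$ because that transposition changes only positions $p$ and $q$. The main obstacle is the bookkeeping for the conjugation identity $z^{-1} s_{q-1} z = t_{p,q}$; once it is pinned down, both halves of the statement reduce to direct references back to Example~\ref{ex:231bountiful} and a single application of the $\NDPF$ relation.
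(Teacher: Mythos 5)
Your approach is the same as the paper's: Corollary~\ref{cor:fiber231} is stated as ``a direct result of the proof of Lemma~\ref{lemma:231unmatched},'' which in turn rests on the bountiful width system of Example~\ref{ex:231bountiful}, and you have simply unpacked that factorization into an explicit computation. The conjugation identity $z^{-1}s_{q-1}z = t_{p,q}$ is the right bookkeeping to make precise, the commutation argument collapsing $u_1 u_2 s_{q-1} u_2^{-1} u_1^{-1}$ to $u_2 s_{q-1} u_2^{-1} = t_{p,q}$ is correct, and the length-additivity of $y\,(s_{q-1}s_qs_{q-1})\,z$ via the parabolic decomposition is the right reason the braid survives into $\phi(x\,t_{p,q})$. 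The left-minimality check at the end is also correct: the inequalities $x_t < x_r$ for $p<t<q$ and $x_s > x_p$ for $q<s<r$ are exactly those supplied by minimality of $(r-p,\,q-p)$, and since $t_{p,q}$ only moves positions $p$ and $q$, the entries being compared are unchanged.

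There is, however, one step you should re-examine, because it hides a genuine subtlety in the paper's conventions. You invoke the $\NDPF$ relation in the form $\pi_{q-1}\pi_q\pi_{q-1}=\pi_{q-1}\pi_q$, calling this the ``unmatched-ascent'' collapse. But Section~\ref{ssec:ndpf} states the defining relation as $\pi_i\pi_{i+1}\pi_i=\pi_{i+1}\pi_i$ (equivalently $\pi_i\pi_{i-1}=\pi_{i-1}\pi_i\pi_{i-1}$), which is the \emph{other} collapse: the braid becomes the descent word $\pi_q\pi_{q-1}$, not the ascent word $\pi_{q-1}\pi_q$. With the stated relation your computation gives $\phi(x\,t_{p,q})=\pi_y\,\pi_q\pi_{q-1}\,\pi_z$, which is not manifestly equal to $\phi(x)=\pi_y\,\pi_{q-1}\pi_q\,\pi_z$; indeed in $\NDPF_N$ one has $\pi_{q-1}\pi_q\neq\pi_q\pi_{q-1}$ as functions. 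For a concrete test, take $N=3$ and $x=[231]=s_1s_2$ with the minimal $[231]$-pattern at $(1,2,3)$: then $x\,t_{1,2}=[321]=s_1s_2s_1$, and under the right-action function model of Section~\ref{ssec:ndpf} one computes $\phi([231])=\pi_1\pi_2$ sending $(1,2,3)\mapsto(1,1,2)$ while $\phi([321])=\pi_1\pi_2\pi_1$ sends $(1,2,3)\mapsto(1,1,1)$, so these are \emph{not} in the same fiber. What you wrote down is the version the corollary actually needs (and, one can check, the version the proof of Theorem~\ref{thm:ndpfFibers231} implicitly uses when it speaks of ``inserting a $\pi_i$'' after $\pi_i\pi_{i+1}$); but it disagrees with the relation printed in Section~\ref{ssec:ndpf}. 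You should either verify which convention is intended, or flag that with the printed relation the statement concerns $[312]$-patterns and the transposition $t_{q,r}$ rather than $[231]$ and $t_{p,q}$. As it stands, your proof is internally coherent but not a derivation from the paper's stated $\NDPF$ relation.
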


\subsection{Involution}
\label{subsec:involution}

Let $\Psi$ be the involution on the symmetric group induced by conjugation by the longest word.  Then $\Psi$ acts on the generators by sending $s_i \to s_{N-i}$.  This descends to an isomorphism of $H_0(S_N)$ by exchanging the generators in the same way: $\pi_i \to \pi_{N-i}$.

We can thus obtain a second map from $H_0(S_N)\to \NDPF_N$ by pre-composing with $\Psi$.  This has the effect of changing the $\NDPF$ relation to a statement about unmatched \emph{descents} instead of unmatched ascents.  Then applying the $\NDPF$ relation allows one to exchange braids for unmatched descents and vice-versa, giving the following theorem.

\begin{theorem}
\label{thm:ndpfFibers312}
Each fiber of the map $\phi \circ \Psi: H_0(S_N)\to \NDPF_N$ contains a unique $[321]$-avoiding element for minimal length and a unique $[312]$-avoiding element of maximal length.
\end{theorem}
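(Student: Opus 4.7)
The plan is to deduce Theorem~\ref{thm:ndpfFibers312} from Theorem~\ref{thm:ndpfFibers231} by transporting the latter through the automorphism $\Psi$. First I would record the action of $\Psi$ on one-line notation: since $\Psi$ is conjugation by the longest element $w_0=[N,N-1,\ldots,1]$, a direct computation gives $\Psi(x)=[N+1-x_N,N+1-x_{N-1},\ldots,N+1-x_1]$, the reverse-complement of $x$. Because $\Psi$ merely permutes the simple reflections via $s_i\mapsto s_{N-i}$, it is an involutive length-preserving automorphism of $S_N$ which descends through $\pi_i\mapsto\pi_{N-i}$ to a monoid automorphism of $H_0(S_N)$ compatible with this bijection.

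Next I would verify the effect of $\Psi$ on the three pattern classes that appear. Translating indices $i_1<i_2<i_3$ in $\Psi(x)$ back to indices $j_k=N+1-i_{4-k}$ in $x$ inverts the order of the corresponding values, and a routine check shows that $\Psi$ preserves $[321]$-avoidance while exchanging $[231]$-avoidance with $[312]$-avoidance. In particular, $[321]$-avoiders form a $\Psi$-stable set, and $\Psi$ restricts to a length-preserving bijection between $[231]$-avoiders and $[312]$-avoiders.

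Finally, the fibers of $\phi\circ\Psi$ are precisely the sets $\Psi^{-1}(F)=\Psi(F)$ as $F$ ranges over the fibers of $\phi$. Since $\Psi$ is length-preserving and respects the pattern classes above, the unique minimal-length $[321]$-avoider in $F$ supplied by Theorem~\ref{thm:ndpfFibers231} is carried by $\Psi$ to the unique minimal-length $[321]$-avoider in $\Psi(F)$, and the unique maximal-length $[231]$-avoider in $F$ is carried to the unique maximal-length $[312]$-avoider in $\Psi(F)$. This yields the theorem. No serious obstacle arises; the only point demanding care is the translation of pattern classes under reverse-complement, which is a brief indexing check. As an alternative route, one may rerun the proof of Theorem~\ref{thm:ndpfFibers231} verbatim, replacing the bountiful $[231]$ width system of Example~\ref{ex:231bountiful} with its $[312]$ analogue from Proposition~\ref{prop:s3widthSystems} and invoking the unmatched-descent half of Lemma~\ref{lemma:231unmatched}, since under $\phi\circ\Psi$ the $\NDPF_N$ relation now trades braids for unmatched descents rather than unmatched ascents.
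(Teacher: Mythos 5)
Your proposal is correct and matches the paper's intent: the paper introduces $\Psi$ explicitly, observes that it turns the $\NDPF$ relation into one trading braids for unmatched descents, and then simply remarks that ``the proof is exactly the mirror of the proof in previous section.'' Your primary transport argument (verifying that the reverse-complement $\Psi$ is a length-preserving bijection fixing $[321]$-avoiders and swapping $[231]$-avoiders with $[312]$-avoiders, then pushing Theorem~\ref{thm:ndpfFibers231} forward along the fibers $\Psi(F)$) makes this mirroring precise, and your alternative route of rerunning the argument with the $[312]$ width system and unmatched descents is the same mirror made explicit at the level of the width-system machinery.
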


The proof is exactly the mirror of the proof in previous section.

We fix bountiful width system for $[312]$-patterns, and a second bountiful width system for $[321]$-patterns, which we will use for the remainder of this section.
\begin{definition}
Let $x\in S_N$, $x=[x_1, \ldots, x_N]$ in one-line notation, and consider all $[312]$-patterns $(x_p,x_q,x_r)$ in $x$.  The \textbf{width} of a $[312]$-pattern $(x_p,x_q,x_r)$ is the pair $(r-p, r-q)$.  The pattern is a \textbf{minimally chosen $[312]$-pattern} if the width is lexicographically minimal amongst all $[312]$-patterns in $x$.

Likewise, call a $[321]$-pattern $(x_p,x_q,x_r)$ \textbf{right minimal} if the \textbf{right width} $(p-r, r-q)$ is lexicographically minimal amongst all $[321]$-patterns in $x$.
On the other hand, call a $[321]$-pattern $(x_p,x_q,x_r)$ \textbf{right minimal} if for all $t$ with $p<t<q$, $x_t<x_q$, and for all $s$ with $q<s<r$, $x_s>x_p$.
\end{definition}

\begin{corollary}
\label{cor:fiber312}
Let $x\in S_N$.  Let $(x_p,x_q,x_r)$ be a minimally chosen $[312]$-pattern in $x$.  Then the permutation 
\[
[x_1, \ldots, x_{p-1}, x_q, x_{p+1}, \ldots, x_{q-1}, x_p, x_{q+1}, \ldots, x_r, \ldots, x_N],
\]
obtained by applying the transposition $t_{p,q}$, is in the same $\NDPF\circ \Psi$-fiber as $x$.  The result of applying this transposition is a right-minimal $[321]$-pattern.
\end{corollary}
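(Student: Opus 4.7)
The plan is to derive Corollary~\ref{cor:fiber312} from its mirror Corollary~\ref{cor:fiber231} by transporting through the involution $\Psi$ of Section~\ref{subsec:involution}. Recall that $\Psi$ acts on $S_N$ as reverse-complement in one-line notation, so it exchanges the patterns $[312]$ and $[231]$, sending a $[312]$-pattern at positions $(p,q,r)$ in $x$ to a $[231]$-pattern at positions $(N+1-r,\,N+1-q,\,N+1-p)$ in $\Psi(x)$. The first step is to check that the $[312]$ width $(r-p,\,r-q)$ used in Corollary~\ref{cor:fiber312} matches, under this reindexing, the $[231]$ width $(r'-p',\,q'-p')$ from Corollary~\ref{cor:fiber231}. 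Since $\Psi$ is an order-reversing bijection on positions, lexicographic minimality is preserved, so a minimally chosen $[312]$-pattern in $x$ becomes a minimally chosen $[231]$-pattern in $\Psi(x)$.

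Next, I would apply Corollary~\ref{cor:fiber231} directly to $\Psi(x)$: performing the indicated transposition on $\Psi(x)$ preserves the $\NDPF$-fiber and converts the $[231]$-pattern into a left-minimal $[321]$-pattern. Pulling this back via $\Psi$, the transposition on $\Psi(x)$ corresponds to the transposition appearing in the displayed one-line notation for $x$, and the left-minimal $[321]$-pattern in $\Psi(x)$ becomes, by the definitions in this subsection, a right-minimal $[321]$-pattern in $x$. Finally, because the map $\phi \circ \Psi$ is literally $\phi$ precomposed with $\Psi$, an equality of $\phi$-fibers at $\Psi(x)$ and $\Psi(x')$ is the same thing as an equality of $(\phi \circ \Psi)$-fibers at $x$ and $x'$, so the fiber condition transports for free.

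The main bookkeeping obstacle is verifying that each of the four notions used in Corollary~\ref{cor:fiber231}---lexicographic width minimality of the $[231]$-pattern, the particular transposition, the resulting left-minimal $[321]$-condition, and preservation of the $\NDPF$-fiber---transports cleanly under $\Psi$ to the corresponding notion in Corollary~\ref{cor:fiber312}. None of these checks is deep, but care is required because $\Psi$ reverses both positions and values simultaneously. As an alternative, one can avoid $\Psi$ entirely and mimic the proof of Corollary~\ref{cor:fiber231} directly: use the bountiful $[312]$ width system from Proposition~\ref{prop:s3widthSystems} to produce a factorization $x = y\,\pi_{i+1}\pi_i\,z$ with $y \in W^{\{i,i+1\}}$ and $z \in \leftexp{\{i,i+1\}}{W}$, then apply the braid-for-unmatched-descent exchange legitimized by $\phi \circ \Psi$ via Lemma~\ref{lemma:231unmatched} to insert a $\pi_{i+1}$ and produce the desired right-minimal $[321]$-pattern while staying within the same fiber.
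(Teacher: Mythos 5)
Your approach---transport through the involution $\Psi$---is exactly what the paper implicitly intends: after Theorem~\ref{thm:ndpfFibers312} it says the proof ``is exactly the mirror of the proof in [the] previous section'' and then states Corollary~\ref{cor:fiber312} without further argument. However, the step you flagged as requiring care but did not carry out---``the transposition on $\Psi(x)$ corresponds to the transposition appearing in the displayed one-line notation for $x$''---is where the problem lies, and checking it reveals a mismatch.

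Write $w_0$ for the longest element, so $\Psi(x) = w_0 x w_0$ and $\Psi(x)(i) = N+1-x(N+1-i)$. A $[312]$-instance of $x$ at positions $(p,q,r)$ becomes a $[231]$-instance of $\Psi(x)$ at positions $(p',q',r') = (N+1-r,\,N+1-q,\,N+1-p)$, and the $[312]$-width $(r-p,\,r-q)$ coincides with the $[231]$-width $(r'-p',\,q'-p')$, so minimality transports as you say. But Corollary~\ref{cor:fiber231} then multiplies $\Psi(x)$ on the right by $t_{p',q'} = t_{N+1-r,\,N+1-q}$, and conjugating back gives $w_0\,t_{N+1-r,\,N+1-q}\,w_0 = t_{r,q}$, \emph{not} $t_{p,q}$. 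This agrees with what the width system dictates: for $[231]$ the inner width $q-p$ is minimized and the nearby pair $(p,q)$ is swapped, whereas for $[312]$ the inner width is $r-q$, so the nearby pair $(q,r)$ should be swapped. One can also see this directly: in a $[312]$-instance one has $x_q < x_r < x_p$, and swapping positions $p$ and $q$ produces $(x_q, x_p, x_r)$, a $[132]$-pattern rather than a $[321]$-pattern, so the displayed one-line formula cannot yield the claimed right-minimal $[321]$-instance. Swapping positions $q$ and $r$ gives $(x_p, x_r, x_q)$, which \emph{is} a $[321]$-pattern. In short, the statement as printed almost certainly has $t_{p,q}$ (and the corresponding one-line display) carried over verbatim from Corollary~\ref{cor:fiber231}; the correct transposition is $t_{q,r}$, and your unverified assertion that the transpositions match hides exactly the discrepancy that a careful mirror argument would have exposed.
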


\section{Type B $\NDPF$ and $[4321]$-Avoidance}
\label{sec:bndpfPattAvoid}

In this section, we establish a monoid morphism of $H_0(S_N)$ whose fibers each contain a unique $[4321]$-avoiding permutation.  To motivate this map, we begin with a discussion of Non-Decreasing Parking Functions of Type $B$.

The Weyl Group of Type $B$ may be identified with the \textbf{signed symmetric group} $S_N^B$, which is discussed (for example) in~\cite{Bjorner_Brenti.2005}.  Combinatorially, $S_N^B$ may be understood as a group permuting a collection of $N$ labeled coins, each of which can be flipped to heads or tails.  The size of $S_N^B$ is thus $2^NN!$.   A minimal set of generators of this group are exactly the simple transpositions $\{t_i\mid i \in \{1, \ldots, N-1\}\}$ interchanging the coins labeled $i$ and $i+1$, along with an extra generator $t_N$ which flips the last coin.  

The group $S_N^B$ can be embedded into $S_{2N}$ by identifying the $t_i$ with $s_is_{2N-i}$ for each $i \in \{1, \ldots, N-1\}$, and $t_N$ with $s_N$.

\begin{definition}
The {\bf Type B Non-Decreasing Parking Functions} $\BNDPF_N$ are the elements of the submonoid of $\NDPF_{2N}$ generated by the collection $\mu_i := \pi_i\pi_{2N-i}$ for $i$ in the set $\{1, \ldots, N\}$.
\end{definition}

Note that $\mu_N = \pi_N^2 = \pi_N $.

The number of $\BNDPF_N$ has been explicitly computed up to $N=9$, though a proof for a general enumeration has proven elusive, in the absence of a more conceptual description of the full set of functions generated thusly.  The sequence obtained (starting with the $0$-th term) is
\[
	( 1, 2, 7, 33, 183, 1118, 7281, 49626, 349999, 253507, \ldots ),
\]
which agrees with the sequence 
\[
\sum_{j=0}^N \binom{N}{j}^2 C_j 
\]
so far as it has been computed.  This appears in Sloane's On-Line Encyclopedia of Integer Sequences as sequence $A086618$~\cite{Sloane}, and was first noticed by Hivert and Thi\'ery~\cite{Hivert.Thiery.HeckeGroup.2007}.

\begin{conjecture}
\[ 
| \BNDPF_N | = \sum_{j=0}^N \binom{N}{j}^2 C_j.
\]
\end{conjecture}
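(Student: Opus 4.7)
The plan is to reduce the conjecture via Theorem~\ref{thm:ndpfFibers4321} to a combinatorial identity on $[4321]$-avoiding centrally-symmetric permutations, and then establish that identity bijectively.

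First, I would invoke Theorem~\ref{thm:ndpfFibers4321}, which (per the introduction) provides a bijection between $\BNDPF_N$ and the $[4321]$-avoiding permutations arising in the image of $S_N^B \hookrightarrow S_{2N}$---equivalently, permutations $w \in S_{2N}$ satisfying the central-symmetry condition $w(2N+1-i) = 2N+1-w(i)$ that avoid $[4321]$. This reduces the conjecture to the combinatorial identity
\[
\bigl|\{w \in S_{2N} : w \text{ centrally symmetric, avoids } [4321]\}\bigr| = \sum_{j=0}^{N}\binom{N}{j}^{2}C_j.
\]
For $N=2$ this can be verified by direct inspection: exactly seven of the eight elements of the image of $S_2^B$ in $S_4$ avoid $[4321]$ (the long element $[4,3,2,1]$ being the one exception), matching $1 + 4 + 2 = 7$.

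Second, I would establish the identity bijectively. A centrally-symmetric $w \in S_{2N}$ is determined by its values on positions $\{N+1,\ldots,2N\}$, whose images form a transversal of the pairs $\{k, 2N+1-k\}$; via the signed-permutation interpretation, $w$ is encoded by a sign pattern and an underlying unsigned permutation of $[N]$. The goal is to decompose each such $[4321]$-avoider into a triple $(A, B, D)$, where $A, B \subseteq \{1,\ldots,N\}$ are subsets of common size $j$ recording the ``signed'' positions and ``signed'' values respectively, and $D$ is a $[321]$-avoiding permutation on $[j]$ encoding the unsigned residual. The claim is that for each choice of $A$ and $B$ there are exactly $C_j$ valid residuals, which recovers the summand $\binom{N}{j}^2 C_j$.

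The main obstacle will be verifying the correspondence ``$w$ avoids $[4321]$ if and only if the residual $D$ avoids $[321]$.'' The easy direction exploits central symmetry: any $[321]$-pattern in $D$ is doubled by the reflection $i \mapsto 2N+1-i$ into a $[4321]$-pattern in $w$. The harder direction requires showing that no $[4321]$-pattern in $w$ is invisible to this decomposition---no such pattern can arise purely from interactions between signed positions without forcing a $[321]$-pattern in $D$. A careful induction on $N$, peeling off the outermost signed pair $\{k, 2N+1-k\}$ at each step and applying the width system and factorization machinery of Section~\ref{sec:widthSystems} to control $[4321]$-pattern occurrences, should make this rigorous. As a fallback, one could compute the generating functions on both sides (the right side is standard hypergeometric, while the left side should satisfy a functional equation from the recursive structure of $\BNDPF_N$) and match them directly.
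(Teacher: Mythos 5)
This statement is a \emph{Conjecture} in the paper, and the author explicitly says that ``a proof for a general enumeration has proven elusive.'' There is no proof in the paper to compare yours against, so the question is simply whether your argument stands on its own---and it does not, for two reasons.

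First, your opening step misreads Theorem~\ref{thm:ndpfFibers4321}. That theorem concerns $\omega : H_0(S_N) \to \BNDPF_{N,\hat N}$, where $\BNDPF_{N,\hat N}$ is the \emph{parabolic} submonoid of $\BNDPF_N$ generated by $\mu_1,\ldots,\mu_{N-1}$ only (dropping $\mu_N = \pi_N$). It asserts that each fiber of $\omega$ contains a unique $[4321]$-avoiding permutation \emph{of $S_N$}, which gives $|\BNDPF_{N,\hat N}|$ equal to the number of $[4321]$-avoiders in $S_N$; the paper's own computation verifies the sequence $1,2,6,23,103,513,2761,\ldots$ (OEIS A005802). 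That is not the sequence the conjecture is about: $|\BNDPF_N| = 1,2,7,33,183,1118,\ldots$. Nothing in the paper identifies $\BNDPF_N$ with centrally-symmetric $[4321]$-avoiders in $S_{2N}$, and such an identification---if true---would itself require proving that each fiber of the $\NDPF_{2N}$ quotient restricted to $H_0(S_N^B)$ contains a unique centrally-symmetric $[4321]$-avoiding element. Your $N=2$ check is consistent with that being true, but it is not a consequence of any theorem in the paper, and you supply no argument for it.

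Second, even if the reduction were established, your bijective step is explicitly a plan, not a proof. You identify the crux yourself (``no $[4321]$-pattern in $w$ is invisible to this decomposition'') and defer it to ``a careful induction'' or a generating-function ``fallback.'' The width-system machinery of Section~\ref{sec:widthSystems} is built to factor permutations containing a pattern, not to control which $(A,B,D)$-triples arise from centrally-symmetric avoiders, so it is far from clear that it does what you want here. As written, the argument would need both a correct starting reduction (with a new lemma replacing the misapplied Theorem~\ref{thm:ndpfFibers4321}) and a completed bijection before it could be considered a proof of the conjecture.
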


Let $X$ be some object (group, monoid, algebra) defined by generators $S$ and relations $R$.  Recall that a \emph{parabolic subobject} $X_J$ is generated by a subset $J$ of the set $S$ of simple generators, retaining the same relations $R$ as the original object.  Let $\BNDPF_{N,\hat{N}}$ denote the parabolic submonoid of of $\BNDPF_N$ retaining all generators but $\mu_N$.

Consider the embedding of $\BNDPF_{N,\hat{N}}$ in $\NDPF_{2N}$.  Then a reduced word for an element of $\BNDPF_{N,\hat{N}}$ can be separated into a pairing of $\NDPF_N$ elements as follows:
\begin{align}
\mu_{i_1}\mu_{i_2}\ldots\mu_{i_k} &=& \pi_{i_1}\pi_{2N-i_1}\pi_{2N-i_2}\pi_{i_2}\ldots\pi_{i_k}\pi_{2N-i_k} \\
&=& \pi_{i_1}\pi_{i_2}\ldots\pi_{i_k}\pi_{2N-i_1}\pi_{2N-i_2}\ldots\pi_{2N-i_k}
\end{align}

In particular, one can take any element $x\in H_0(S_N)$ and associate it to the pair:
\[
\omega(x):=(\phi(x), \phi\circ\Psi(x)),
\] 
recalling that $\Psi$ is the Dynkin automorphism on $H_0(S_N)$, described in Section~\ref{subsec:involution}.

Given the results of the earlier section, one naturally asks about the fiber of $\omega$.  It is easy to do some computations and see that the situation is not quite so nice as before.  In $H_0(S_4)$ the only fiber with order greater than one contains the elements $[4321]$ and $[4231]$.  Notice what happens here: $[4231]$ contains both a $[231]$-pattern and a $[312]$-pattern, which is straightened into two $[321]$-patterns.   On the level of reduced words, two reduced words for $[4231]$ are $((3,2,1,2,3))=((1,2,3,2,1))$, one of which ends with the unmatched ascent $[2,3]$ while the other ends with the unmatched descent $[2,1]$.  Multiplying on the right by the simple transposition $s_2$ matches both of these simultaneously.

In fact, this is a perfectly general operation.  Let $x\in H_0(S_N)$.  For any minimally-chosen $[231]$-pattern in $x$, one can locate an unmatched ascent in $x$ that corresponds to the pattern.  Here the smaller element to the right remains fixed while the two ascending elements to the left are exchanged.  Then applying the $\NDPF$ relation to turn the $[231]$ into a $[321]$ preserves the fiber of $\phi$.  Likewise, one can turn a minimal $[312]$ into a $[321]$ and preserve the fiber of $\phi\circ\Psi(x)$.  Here the larger element to the left is fixed while the two ascending elements to the right are exchanged.  Hence, to preserve the fiber of $\omega$, one must find a pair of ascending elements with a large element to the left and a small element to the right: this is exactly a $[4231]$-pattern.  

One may make this more precise by defining a system of widths under which minimal $[4231]$-patterns contain a locally minimal $[231]$-pattern and a locally-minimal $[321]$-pattern.  The results of Section~\ref{sec:widthSystems} imply that this is possible.  Applying the $\NDPF$ relation, this becomes a $[4321]$. 

On the other hand, we can define a minimal $[4321]$-pattern by a tuple of widths analogous to the constructions of minimal $[231]$-patterns.  The construction of this tuple, and the constraints implied when the tuple is minimal, is depicted in Figure~\ref{fig.minimal4321}.  Such a minimal pattern may always be turned into a $[4231]$-pattern while preserving the fiber of $\omega$.

Let $x\in S_N$ and $P=(x_p, x_q, x_r, x_s)$ a $[4321]$-pattern in $x$.  For the remainder of this section, we fix the width system $(q-p, r-q, s-r)$, and use the same width system for $[4231]$-patterns.  One may check directly that this is a bountiful width system in both cases.

\begin{figure}
  \begin{center}
  \includegraphics[scale=1]{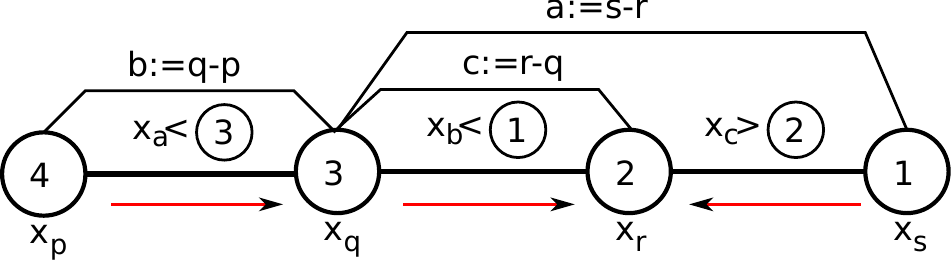}
  \end{center}
  \caption{A diagram of a minimal $[4321]$ pattern, labeled analogously to the labeling in Figure~\ref{fig.minimal231}.}
  \label{fig.minimal4321}
\end{figure}

\begin{lemma}
Let $x$ contain a minimal $[4321]$-pattern $P=(x_p, x_q, x_r, x_s)$, and let $x' = x t_{r,s}$, where $t_{r,s}$ is the transposition exchanging $x_r$ and $x_s$.  Then $\omega(x')=\omega(x)$.
\end{lemma}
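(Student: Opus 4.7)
The plan is to reduce $\omega(x')=\omega(x)$ to a purely local check on an $S_4$-block holding the four pattern entries in adjacent positions. Since $[4321]$ has length one less than the long element of $S_4$, Corollary~\ref{cor:bountifulPerms} provides a bountiful width system with widths $(q-p,r-q,s-r)$, and minimality of $P$ in this system, combined with the bountiful property, forces strong constraints on every intermediate entry $x_t$ (for $p<t<q$, $q<t<r$, or $r<t<s$): each such $x_t$ must be either smaller than all pattern values strictly to its right or larger than all pattern values strictly to its left.

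Using those constraints, the sliding procedure of Example~\ref{ex:231bountiful}---moving ``small'' intermediate entries out to the left and ``large'' ones out to the right via right-multiplication by simple transpositions, each step reducing length by one---produces a reduced factorization $x=y\cdot\tau\cdot z$ in which $\tau\in W_J$ realizes $[4321]$ on the block of adjacent positions $j+1,\dots,j+4$, $y\in W^J$, $z\in\leftexp{J}{W}$, and $\len(x)=\len(y)+\len(\tau)+\len(z)$, with $J=\{j+1,j+2,j+3\}$. Because $z^{-1}$ preserves the relative order of the block positions (spreading $j+1,\dots,j+4$ out to $p<q<r<s$), conjugation by $z$ carries $t_{r,s}$ to a single simple transposition $s_{j+k}$ interior to the block. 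Hence
\[
x\cdot t_{r,s}=y\cdot\tau\cdot z\cdot t_{r,s}=y\cdot(\tau s_{j+k})\cdot z=y\cdot\tau'\cdot z,
\]
where $\tau'=\tau s_{j+k}\in W_J$ realizes the pattern $[4231]$ on the same block; since the descent conditions on $y,z$ are intrinsic to $J$, this factorization remains length-additive.

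The atomic verification $\omega(\tau)=\omega(\tau')$ in $S_4$ then runs as follows. The permutation $\tau'=[4231]$ contains a $[231]$-pattern on its last three entries, which the $\NDPF$ relation $\pi_i\pi_{i+1}\pi_i=\pi_{i+1}\pi_i$ identifies with the $[321]$-pattern of $\tau=[4321]$; hence $\phi(\tau)=\phi(\tau')$. Dually, $\tau'$ contains a $[312]$-pattern on its first three entries, which $\Psi$ carries to a $[231]$-pattern in $\Psi(\tau')$, and the same relation then yields $\phi\circ\Psi(\tau)=\phi\circ\Psi(\tau')$. Applying $\phi$ and $\phi\circ\Psi$ to both factorizations---and using that each is a monoid morphism---gives $\phi(x)=\phi(x')$ and $\phi\circ\Psi(x)=\phi\circ\Psi(x')$, so $\omega(x)=\omega(x')$.

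The main obstacle is the conjugation bookkeeping in the second paragraph: pinning down which interior $s_{j+k}$ is $z$-conjugate to $t_{r,s}$ and verifying length-additivity of the new factorization. Both rest on the explicit form of $z$ supplied by the bountiful-width-system argument, where $z$ is a product of the simple transpositions used to shuttle the pattern into the block.
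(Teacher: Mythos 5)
Your overall strategy is the same as the paper's: factor $x$ through the bountiful width system for $[4321]$, absorb the transposition into the $W_J$-factor, and reduce to a local $\NDPF$ check on the $S_4$-block. However, several of the steps are either wrong or are gaps that, once filled in, force a correction. First, the appeal to Corollary~\ref{cor:bountifulPerms} does not apply: $[4321]$ \emph{is} the long element of $S_4$ (length $6$), not an element of length one less, so it is excluded by the Corollary's hypothesis. The bountiful width system for $[4321]$ instead comes from Proposition~\ref{prop:widthSystemExtend} applied to $\sigma=[321]$, or simply from the paper's direct assertion just before the lemma that $(q-p,r-q,s-r)$ is bountiful.

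Second, the ``conjugation bookkeeping'' you defer is exactly where the argument would collapse as written. If $t_{r,s}$ swaps the third and fourth pattern positions as the lemma statement says, then $z\,t_{r,s}\,z^{-1}$ is the \emph{rightmost} generator $s_{j+3}$ of $J$, and $\tau s_{j+3}$ realizes $[4312]$, not $[4231]$. But $[4312]$ contains no $[231]$-pattern, so $\phi(\tau)\neq\phi(\tau s_{j+3})$ and the atomic check fails. What is actually needed is the transposition $t_{q,r}$ of the \emph{middle} two pattern positions, which conjugates to $s_{j+2}$ and yields $\tau'=[4231]$ as you claim; this is what the paper's own proof does (it removes the middle generator $s_{s-1}$ from $x_J$) and what the accompanying Sage code implements (\texttt{Permutation((q[1]+1,q[2]+1))}). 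The lemma's $t_{r,s}$ is in fact a typo for $t_{q,r}$, and your proof silently assumes the corrected version while nominally working with $t_{r,s}$. Finally, the assertion that length-additivity of $x'=y\tau'z$ follows ``since the descent conditions on $y,z$ are intrinsic to $J$'' is not a valid reason: $y\in W^J$, $\tau'\in W_J$, $z\in\leftexp{J}{W}$ alone do not force $\len(y\tau'z)=\len(y)+\len(\tau')+\len(z)$. What does work is that bountifulness prevents any $x_t$ with $q<t<r$ from having a value strictly between $x_r$ and $x_q$, so $\len(xt_{q,r})=\len(x)-1$; together with $\len(y)+\len(\tau')+\len(z)=\len(x)-1$ and subadditivity of length, this pins down the required equality.
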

\begin{proof}
Since the width system on $[4321]$-patterns is bountiful, we can factor $x = y x_J z$, with $\len(x)=\len(y)+ \len(x_J)+\len(z)$ where 
\[x_J = s_{s-2}s_{s-1} s_s s_{s-1} s_{s-2}s_{s-1}.\]
By the discussion above, the trailing $s_{s-1}$ in $x_J$ may be removed to simultaneously yield an unmatched ascent and an unmatched descent.  Then this removal preserves the fiber of both $\phi$ and $\Psi\circ \phi$, and thus also preserves the fiber of $\omega$.
\end{proof}

Note that there need not be a unique $[4231]$-avoiding element in a given fiber of $\omega$.  The first example of this behavior occurs in $N=7$, where there is a fiber consisting of $[5274163], [5472163],$ and $[5276143]$.  In this list, the first element is $[4321]$-avoiding, and the two latter elements are $[4231]$-avoiding.  In the first element, there are $[4231]$ patterns $[5241]$ and $[7463]$ which can be respectively straightened to yield the other two elements.  Notice that either transposition moves the 4 past the bounding element of the other $[4231]$-pattern, thus obstructing the second transposition.

\begin{theorem}
\label{thm:ndpfFibers4321}
Each fiber of $\omega$ contains a unique $[4321]$-avoiding element.
\end{theorem}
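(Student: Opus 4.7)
The plan is to establish existence and uniqueness separately, mirroring the structure of Theorems~\ref{thm:ndpfFibers231} and~\ref{thm:ndpfFibers312}. For existence I will begin with any element $x$ in a given $\omega$-fiber, and repeatedly apply the preceding Lemma: locate a minimal $[4321]$-pattern with respect to the bountiful width system fixed above, and apply the transposition $t_{r,s}$ to turn it into a $[4231]$-pattern. This operation preserves the $\omega$-fiber and, as the Lemma's proof indicates via the factorization $x=yx_J z$ with $x_J$ ending in $s_{s-1}$, reduces $\len$ by one. Since length is bounded below, the process must terminate at a $[4321]$-avoiding element in the fiber.

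For uniqueness, I will suppose $x$ and $y$ are both $[4321]$-avoiding with $\omega(x)=\omega(y)$, and aim to derive $x=y$. My primary approach is confluence: regard the straightening rule $[4321]\to[4231]$ described in the Lemma as a rewrite rule on $S_N$. The system is terminating since each rewrite strictly decreases $\len$. If I can establish local confluence — namely, whenever $x$ admits two distinct minimal $[4321]$-patterns $P_1$ and $P_2$, the two rewrite products $x\,t_{r_1,s_1}$ and $x\,t_{r_2,s_2}$ can be joined by further rewrites to a common descendant — then Newman's Lemma yields global confluence, and hence uniqueness of the $[4321]$-avoiding normal form in each $\omega$-fiber.

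A complementary strategy, which I would pursue in parallel, is to leverage Theorems~\ref{thm:ndpfFibers231} and~\ref{thm:ndpfFibers312} directly. Let $a$ be the unique $[321]$-avoiding representative of the common $\phi$-fiber of $x$ and $y$; every element of that fiber is obtained from $a$ by a sequence of $[231]\to[321]$ insertions as tracked in the proof of Theorem~\ref{thm:ndpfFibers231}. Similarly, there is a unique $[321]$-avoider $b$ in the common $\phi\circ\Psi$-fiber, and $x$ and $y$ must also be reachable from $b$ by $[312]\to[321]$ insertions. The constraint that $x$ and $y$ are $[4321]$-avoiding restricts which insertions can be performed simultaneously from the two sides without creating a decreasing subsequence of length four, and the hope is that this joint compatibility is rigid enough to pin down a unique element.

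The main obstacle is the uniqueness argument. The local-confluence case analysis is likely to be delicate because two minimal $[4321]$-patterns in a single permutation may overlap in several configurations — sharing one, two, or three indices, or straddling one another — and each configuration must be handled separately to exhibit a joining. The example cited just before this theorem, the $\omega$-fiber $\{[5274163],[5472163],[5276143]\}$ in which the two distinct $[4231]$-avoiding representatives arise from straightening non-commuting $[4231]$-patterns in the unique $[4321]$-avoider, is a warning that overlap interactions obstruct naive moves already in $S_7$; the straightening direction I actually use (into $[4231]$) must be shown to be free of analogous obstructions.
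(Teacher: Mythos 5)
Your overall strategy coincides with the paper's: existence is obtained by terminating the length-reducing rewrite $[4321]\to[4231]$ applied to locally minimal patterns, and uniqueness is argued via a diamond-lemma/Newman's-Lemma scheme applied to that rewrite system. The paper phrases this as imposing a partial order on each $\omega$-fiber with covering relation given by a single rewrite, observing that the fiber is connected (via the $\NDPF$ relations respecting both $\phi$ and $\phi\circ\Psi$), and then arguing that the fiber poset is a meet semilattice --- which is exactly your local confluence. You have correctly identified the key lemma and the structure of the proof.

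However, you explicitly leave the crucial step unproven: you write ``if I can establish local confluence'' and then describe the overlap case analysis as an obstacle, without carrying it out. This is a genuine gap, because the whole theorem hinges on it, and your own warning example (the fiber $\{[5274163],[5472163],[5276143]\}$) shows that the reverse rewrite $[4231]\to[4321]$ is \emph{not} confluent, so one cannot wave away the issue. The paper handles this step by reducing to a finite check: it argues informally that every possible ``intermingling'' of two $4$-tuples of positions, with every possible relative ordering of the eight values involved, already occurs among permutations in $S_N$ for $N<8$, and then verifies the diamond property computationally for those $N$. It also sketches three of the overlap cases ($c=r$, $c=q$, and $r=d$ or $q=d$) by hand. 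You would need either to carry out the complete by-hand case analysis over all ways two locally minimal $[4321]$-patterns can share or interleave indices, or to supply the same finite reduction plus computer verification. Without one of these, the uniqueness half of the theorem is not established.

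Your ``complementary strategy'' of working back from the unique $[321]$-avoiders in the $\phi$- and $\phi\circ\Psi$-fibers is speculative as written (``the hope is that this joint compatibility is rigid enough'') and is not the route the paper takes; nothing you have sketched there constitutes a proof. Finally, a small point you inherited from the Lemma's statement: the transposition that converts a $[4321]$-instance at positions $(p,q,r,s)$ into a $[4231]$-instance is $t_{q,r}$ (swap the two middle positions), not $t_{r,s}$ --- applying $t_{r,s}$ would produce $[4312]$. The paper's own code uses the middle two positions, consistent with $t_{q,r}$, so this is a typo in the Lemma that you should not propagate.
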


\begin{proof}
Given any element of $H_0(S_N)$, we have seen that we can preserve the fiber of $\omega$ by turning locally minimal $[4321]$-patterns into $[4231]$-patterns.  Each such operation reduces the length of the element being acted upon, and thus this can only be done so many times.  Furthermore, any minimal-length element in the fiber of $\omega$ will be $[4321]$-avoiding.  We claim that this element is unique.  

First, note that one can impose a partial order on the fiber of $\omega$ with $x$ covering $y$ if $x$ is obtained from $y$ by turning a locally minimal $[4321]$-pattern into a $[4231]$-pattern.  Then the partial order is obtained by taking the transitive closure of the covering relation.  Note that if $x$ covers $y$ then $x$ is longer than $y$.  The Hasse diagram of this poset is connected, since any element of the fiber can be obtained from another by a sequence of $\NDPF$ relations respecting both the fiber of $\phi$ and $\phi\circ\Psi(x)$.  

Let $x$ be an element of $H_0(S_N)$ containing (at least) two locally minimal $[4321]$-patterns, in positions $(x_a,x_b,x_c,x_d)$ and $(x_p,x_q,x_r,x_s)$, with $a<b<c<d, p<q<r<s$.  Then one can exchange $x_b$ with $x_c$ or $x_q$ with $x_r$ and preserve the fiber of $\omega$.  Let $y$ be the element obtained from exchanging $x_b$ with $x_c$, and $z$ obtained by exchanging $x_q$ with $x_r$.  Then we claim that there exists $w$ covered by both $y$ and $z$.  (In other words, the poset structure on each fiber is a meet semilattice.)

If the tuples $(a,b,c,d)$ and $(p,q,r,s)$ are disjoint, then the claim is clearly true.  Likewise, if $a=p$ and/or $d=s$ the claim holds.  A complete but perhaps unenlightening proof of the claim can be accomplished by showing that it holds for all $\BNDPF_{N,\hat{N}}$ with $N<8$, where every possible intermingling of the tuples with every possible ordering of the entries $x_.$ occurs at least once.  It is best to perform this check with a computer, given that there are 2761 elements in $\BNDPF_{7,\hat{7}}$, with $7! = 5040$ elements in the fibers, and indeed a computer check shows that the claim holds.  The code accomplishing this is provided below.

Let's look at a couple cases, though, to get a feeling for why this should be true.  Refer to the extremal elements at the edge of the $[4321]$ pattern as the ``boundary,'' and the elements to be transposed as the ``interior.''  The main cases are the following:

Case $c=r$: Just take the smaller of $x_s$ and $x_d$ to be the common right boundary for both patterns.

Case $c=q$: The problem for $[4231]$ patterns was that one could apply a transposition that obstructed the other transposition by moving one of the interior elements past its boundary.  But here, we have $x_d<x_c$ and $x_s<x_q=x_d$, so we can use $s$ as the boundary for both patterns, and the obstruction is averted.  In this case, though, the two transpositions generate six elements in the fiber, instead of four.  We can still find a common meet, though.  $[x_ax_bx_qx_rx_s]$ becomes $[x_ax_qx_bx_rx_s]$ and $[x_ax_bx_rx_qx_s]$, which both cover $[x_ax_rx_bx_qx_s]$, for example.

Case $r=d$ or $q=d$: Again, just take $s$ as a common boundary for the two patterns.

And so on.  Many cases are symmetric to the three considered above, and every interesting case is solved by changing the boundary of one of the patterns.

Now that every pair of elements have a common meet, we are almost done.  Suppose there exist two different $[4321]$-avoiding elements $A_1$ and $A_2$ in some fiber.  Then since the fiber is connected, we can find a minimal element $x$ where a branching occurred, so that $x$ covers both $y>A_1$ and $z>A_2$, and $x$ is of minimal length.  But if both $y$ and $z$ were obtainable from $x$, then there exists a $w$ of shorter length below them both.  Now $w$ sits above some $[4321]$-avoiding element, as well.  If $w>A_1$ but not $A_2$, then in fact a branching occurred at $z$, contradicting the minimality of $x$.  The same reasoning holds if $w>A_2$ but not $A_1$.  If $w$ is above both $A_1$ and $A_2$, then in fact $y$ was comparable to $A_2$ and $z$ was comparable to $A_1$, and there was not a branching at $x$ at all.
\end{proof}

\subsection{Code for Theorem~\ref{thm:ndpfFibers4321}.}
Here we provide code for checking the claim of Theorem~\ref{thm:ndpfFibers4321} that each fiber of $\omega$ contains a unique $[4321]$-avoiding element.  The code is written for the Sage computer algebra system, which has extensive built-in functions for combinatorics of permutations, including detecting the presence of permutation patterns.  

The code below constructs a directed graph (see the function \textbf{omegaFibers}) whose connected components are fibers of $\omega$.  The vertices of this graph are permutations, and the edges correspond to straightening locally-minimal $[4231]$-patterns into $[4321]$ patterns.  A component is `bad' if it does not contain exactly one $[4321]$-avoiding permutation.
\begin{verbatim}
def width4231(p):
    """
    This function returns the width of a [4231]-instance p.
    """
    return (p[1]-p[0], p[2]-p[1], p[3]-p[2]) 

def min4231(x):
    """
    This function takes a permutation x and finds all minimal-width
    4231-patterns in x, and returns them as a list.
    """
    P=x.pattern_positions([4,2,3,1])
    if P==[]:
        return None
    minimal=[P[0]]
    for i in [1..len(P)-1]:
        if width4231(P[i])<width4231(minimal[0]):
            minimal = [ P[i] ]
        else:
            if width4231(P[i])==width4231(minimal[0]):
                minimal.append(P[i])
    return minimal
    
def localMin4231(x):
    """
    This function finds all locally-minimal 4231-patterns in a 
    permutation x, and returns them as a list.
    """
    P=x.pattern_positions([4,2,3,1])
    if P==[]:
        return None
    localMin=[]
    for p in P:
        xp=Permutation(x[ p[0]:p[3]+1 ])
        qp=[i - p[0] for i in p]
        qmin=min4231(xp)
        if qp in qmin: localMin.append(p)
    return localMin

def omegaFibers(N):
    """
    Given N, this function builds a digraph whose vertices are given by
    permutations of N, and with an edge a->b whenever b is obtained
    from a by straightening a locally minimal 4231-pattern into a 
    4321-pattern. 
    The connected components of G are the fibers of the map omega.
    """
    S=Permutations(N)
    G=DiGraph()
    G.add_vertices(S.list())
    for x in S:
        if x.has_pattern([4,2,3,1]):
            # print x, localMin4231(x)
            #add edges to G for each locally minimal 4231.
            Q=localMin4231(x)
            for q in Q:
                y=Permutation((q[1]+1,q[2]+1))*x
                G.add_edge(x,y)
    return G
    
def headCount(G):    
    """
    This function takes the diGraph G produced by the omegaFibers 
    function, and finds any connected components with more than one
    4321-pattern.  It returns a list of all such connected components.
    """
    bad=[]
    for H in G.connected_components_subgraphs():
        total=0
        for a in H:
            if not a.has_pattern([4,3,2,1]): total+=1
        if total != 1:
            #prints if any fiber has more than one 4321-av elt
            print H, total
            bad.append(H)
    print "N =", N 
    print "\tTotal connected components: \t", count
    print "\tBad connected components: \t", len(bad), '\n'
    return bad
\end{verbatim}

As explained in Theorem~\ref{thm:ndpfFibers4321}, we should check that each fiber of $\omega$ contains a unique $[4321]$-avoiding element for each $N\leq 7$.  This is accomplished by running the following commands:
\begin{verbatim}
sage: for N in [1..7]:
sage:     G=omegaFibers(N)
sage:     HH=headCount(G)
\end{verbatim}
The output of this loop is as follows:
\begin{verbatim}
N = 1
	Total connected components: 	1
	Bad connected components: 	0 

N = 2
	Total connected components: 	2
	Bad connected components: 	0 

N = 3
	Total connected components: 	6
	Bad connected components: 	0 

N = 4
	Total connected components: 	23
	Bad connected components: 	0 

N = 5
	Total connected components: 	103
	Bad connected components: 	0 

N = 6
	Total connected components: 	513
	Bad connected components: 	0 

N = 7
	Total connected components: 	2761
	Bad connected components: 	0 
\end{verbatim} 
There are no bad components, and thus the theorem holds.  

The sequence $(1, 2, 6, 23, 103, 513, 2761)$ is the beginning of the sequence counting $[4321]$-avoiding permutations.  This sequence also counts $[1234]$-avoiding permutations (reversing a $[1234]$-avoiding permutation yields a $[4321]$-avoiding permutation, and \textit{vice versa}), and is listed in that context in Sloane's On-Line Encyclopedia of Integer Sequences (sequence $A005802$)~\cite{Sloane}.

The author executed this code on a computer with a 900-mhz Intel Celeron processor (blazingly fast by 1995 standards) and 2 gigabytes of RAM.  On this machine, the $N=6$ case took 3.86 seconds of CPU time, and the $N=7$ case took just over one minute (62.06s) of CPU time.  The $N=8$ case (which is unnecessary to the proof) correctly returns 15767 connected components, none of which are bad, in 1117.24 seconds (or 18.6 minutes).

\section{Affine $\NDPF$ and Affine $[321]$-Avoidance}
\label{sec:affNdpfPattAvoid}

The affine symmetric group is the Weyl group of type $A_N^{(1)}$, whose Dynkin diagram is given by a cycle with $N$ nodes.  All subscripts on generators for type $A_N^{(1)}$ in this section will be considered $(\text{mod } N)$.  A combinatorial realization of this Weyl group is given below.

\begin{definition}
\label{def:affSn}
The \textbf{affine symmetric group} $\ASn_N$ is the set of bijections $\sigma: \ZZ \rightarrow \ZZ$ satisfying:
\begin{itemize}
    \item Skew-Periodicity: $\sigma(i+N) = \sigma(i) + N$, and
    \item Sum Rule: $\sum_{i=1}^N \sigma(i) = \binom{N+1}{2}$.
\end{itemize}
\end{definition}

We will often denote elements of $\ASn_N$ in the \textbf{window notation}, which is a one-line notation where we only write $(\sigma(1), \sigma(2), \ldots, \sigma(N))$.  Due to the skew-periodicity restriction, writing the window notation for $\sigma$ specifies $\sigma$ on all of $\ZZ$.  

The generators $s_i$ of $\ASn_N$ are indexed by the set $I=\{0, 1, \ldots, N-1\}$, and $s_i$ acts by exchanging $j$ and $j+1$ for all $j \equiv i (\text{mod } N)$.  These satisfy the relations:
\begin{itemize}
    \item Reflection: $s_i^2 = 1$,
    \item Commutation: $s_j s_i = s_i s_j$ when $|i-j| > 1$, and
    \item Braid Relations: $s_i s_{i+1} s_i = s_{i+1} s_i s_{i+1}$. \\
\end{itemize}
In these relations, all indices should be considered mod $N$.

Since the Dynkin diagram is a cycle, it admits a dihedral group's worth of automorphisms.  One can implement a ``flip'' automorphism $\Phi$ by fixing $s_0$ and sending $s_i \rightarrow s_{N-i}$ for all $i \neq 0$, extending the automorphism used in the finite case.  A ``rotation'' automorphism $\rho$ can be implemented by simply sending each generator $s_i \rightarrow s_{i+1}$.  Combinatorially, this corresponds to the following operation.  Given the window notation $(\sigma_1, \sigma_2, \ldots, \sigma_N)$, we have:
\[
\rho(\sigma) = (\sigma_N-N+1, \sigma_1 +1, \sigma_2 +1, \ldots, \sigma_{N-1}+1).
\]
This can be thought of as shifting the base window one place to the left, and then adding one to every entry.  It is clear that this operation preserves the skew periodicity and sum rules for affine permutations, and it is also easy to see that $\rho^N = 1$.

As before, we can define the Hecke algebra of $\ASn_N$, and the $0$-Hecke algebra, generated by $\pi_i$ with $\pi_i$ idempotent anti-sorting operators, exactly mirroring the case for the finite symmetric group.  As in the finite case, elements of the $0$-Hecke algebra are in bijection with affine permutations.  We can also define the $\NDPF$ quotient of $H_0(\ASn_N)$, by introducing the relation 
\[
\pi_{i+1}\pi_i \pi_{i+1} = \pi_{i+1} \pi_i.
\]  
This allows us to give combinatorial definition for the affine $\NDPF$, which we will prove to be equivalent to the quotient.

\begin{definition}
The extended affine non-decreasing parking functions are the functions $f: \ZZ\rightarrow \ZZ$ which are:
\begin{itemize}
    \item Regressive: $f(i)\leq i$,
    \item Order Preserving: $i\leq j \Rightarrow f(i)\leq f(j)$, and
    \item Skew Periodic: $f(i+N) = f(i)+N$.
\end{itemize}
Define the \textbf{shift functions} $\operatorname{sh}_t$ as the functions sending $i \rightarrow i-t$ for every $i$.

The \textbf{affine non-decreasing parking functions} $\ANDPF_N$ are obtained from the extended affine non-decreasing parking functions by removing the shift functions for all $t\neq 0$.  
\end{definition}

Notice that the definition implies that
\[
f(N)-f(1) \leq N.
\]
Furthermore, since the shift functions are not in $\ANDPF_N$, there is always some $j \in \{0, 1, \ldots, N\}$ such that $f(j)\neq f(j+1)$ unless $f$ is the identity.

We now state the main result of this section, which will be proved in pieces throughout the remainder of the chapter.
\begin{theorem}
\label{andpfMainThm}
The affine non-decreasing parking functions $\ANDPF_N$ are a $\JJ$-trivial monoid which can be obtained as a quotient of the $0$-Hecke monoid of the affine symmetric group by the relations $\pi_j\pi_{j+1}\pi_j = \pi_j \pi_{j+1}$, where the subscripts are interpreted modulo $N$.  Each fiber of this quotient contains a unique $[321]$-avoiding affine permutation.
\end{theorem}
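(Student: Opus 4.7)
The plan is to address the three claims of the theorem in sequence. For $\JJ$-triviality of $\ANDPF_N$, I would define the weight $w(f) = \sum_{i=1}^{N}(i - i.f)$, which is finite thanks to the skew-periodicity $f(i+N) = f(i) + N$ and nonnegative by regressivity. Since composition of regressive functions is regressive, $w(fg) \geq \max(w(f), w(g))$, and a $\JJ$-equivalence $f = xgy$, $g = ufv$ forces all these weights to coincide; a careful analysis of the equality case then yields $f = g$, paralleling the argument for finite $\NDPF_N$.

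For the quotient structure, I would define $\phi: H_0(\ASn_N) \to \ANDPF_N$ by $\pi_i \mapsto f_i$, where $f_i$ fixes every $j \not\equiv i+1 \pmod{N}$ and sends $j \mapsto j-1$ for $j \equiv i+1 \pmod{N}$. One verifies directly that each $f_i$ is a well-defined element of $\ANDPF_N$ and that the $f_i$ satisfy the $0$-Hecke relations together with the additional relation $f_j f_{j+1} f_j = f_j f_{j+1}$. Surjectivity follows by induction on $w(f)$: given a non-identity $f$, locate a descent index $i$ and factor out an $f_{i-1}$ on the right. To see that the kernel is generated by the displayed relation, one argues that any two $0$-Hecke words with the same image under $\phi$ can be linked by a chain of $0$-Hecke relations and the quotient relation.

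For the existence of a $[321]$-avoiding element in each fiber, I would invoke the Green--Lam extension of Billey--Jockusch--Stanley to the affine setting: an affine permutation contains a $[321]$-pattern if and only if some reduced word contains a braid $s_i s_{i+1} s_i$. Given any $x$ in a fiber that is not $[321]$-avoiding, pick a reduced word with such a braid and apply the quotient relation $\pi_i\pi_{i+1}\pi_i \to \pi_i \pi_{i+1}$ to obtain a strictly shorter element in the same fiber. Since length is bounded below, this process terminates in a $[321]$-avoiding representative.

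The main obstacle is uniqueness. I would approach it via a confluence argument. Define a reduction order on each fiber by $y \prec x$ whenever $y$ arises from $x$ by a single quotient reduction applied inside a reduced word, so that the $[321]$-avoiding elements are exactly the minima of the transitive closure. The crucial step is a local diamond lemma: if $x$ admits two distinct braid locations yielding two reductions $y_1, y_2$, then $y_1$ and $y_2$ share a common further reduction. This should reduce, via the bountiful $[321]$ width system of Section~\ref{sec:widthSystems} extended to the affine setting, to a finite case analysis on how two $[321]$-patterns overlap within an affine window. Tracking the interaction of overlapping patterns across the periodic boundary is the technically delicate piece, since minimality of widths must be measured using a fundamental domain; once confluence is established, uniqueness of the minimum follows formally, completing the proof.
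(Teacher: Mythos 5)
Your plan for $\JJ$-triviality works, but it is more roundabout than necessary: the paper simply observes that for $g\in MfM$ one has $g(i)\le f(i)$ pointwise (pre- and post-composition by regressive order-preserving maps only decreases values), so $MfM=MgM$ forces $f=g$ directly, with no need to pass through a scalar weight and analyze equality cases. Your existence argument for a $[321]$-avoiding representative via Green's braid characterization matches the paper exactly.

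The two places where your proposal has genuine gaps are precisely the two hard parts. First, to show the kernel of $\phi:H_0(\ASn_N)\to\ANDPF_N$ is generated by the stated relation, you write that ``one argues that any two $0$-Hecke words with the same image can be linked by a chain of relations'' --- but that is the entire content of the claim, not a step of the argument. The paper handles this quite differently: it introduces a combinatorial quotient map $Q$ (an explicit algorithm producing $f$ from the window notation of $x$), proves inductively that $Q=P$ where $P$ is the algebraic map $\pi_i\mapsto f_i$, and then uses the uniqueness of $[321]$-avoiding fiber representatives to conclude that $[321]$-avoiding affine permutations form a system of distinct representatives, whence no further relations are possible.

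Second, your uniqueness argument via a local diamond lemma is a genuinely different route from the paper's, but you explicitly flag the confluence check across the periodic boundary as ``the technically delicate piece'' and do not carry it out --- and there is no obvious finite reduction, since the affine group is infinite and the overlap configurations of two $[321]$-instances can straddle the fundamental window in many ways. The paper avoids confluence entirely. Instead it reconstructs $x$ from $f=Q(x)$: on the fiber-maxima $m_i$ one has $x(m_i)=f(m_i)$; $[321]$-avoidance of $x$ forces the remaining values $\{x(m_{i,j})\}$ (indexed by the non-maximal fiber elements) to form a single strictly increasing sequence; hence $x$ is determined by the single value $x(m_{1,1})$; and the affine sum rule $\sum_{k=1}^{N}x(k)=\binom{N+1}{2}$ pins that value down uniquely. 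That reconstruction-plus-sum-rule argument is what makes the affine uniqueness go through cleanly, and it is the idea missing from your proposal.
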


\begin{proposition}
\label{andpfgens}
As a monoid, $\ANDPF_N$ is generated by the functions $f_i$ defined by:
\begin{displaymath}
   f_i(j) = \left\{
     \begin{array}{lr}
       j-1 & : j\equiv i+1 (\text{mod }N)\\
       j   & : j\not \equiv i+1 (\text{mod }N).\\
     \end{array}
   \right.
\end{displaymath} 
These functions satisfy the relations:
\begin{align*}
f_i^2 &=& f_i \\
f_if_j &=& f_jf_i \text{ when $|i-j|>1$, and} \\
f_if_{i+1}f_i = f_{i+1}f_if_{i+1} &=& f_{i+1}f_i \text{ when $|i-j|=1$,}
\end{align*}
where the indices are understood to be taken $(\text{mod }N)$.
\end{proposition}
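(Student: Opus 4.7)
The proposition splits into two parts: I will verify that each $f_i$ lies in $\ANDPF_N$ and satisfies the listed relations, and then show that the $f_i$ generate $\ANDPF_N$ as a monoid under composition.

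Each $f_i$ is manifestly skew-periodic and regressive; order-preservation only requires the observation that $f_i(i) = i$, $f_i(i+1) = i$, $f_i(i+2) = i+2$ at the collapse, and $f_i$ is not a non-identity shift since it has fixed points. The three families of relations are then verified by direct computation on residue classes modulo $N$: $f_i^2 = f_i$ because $j \equiv i+1$ is sent to $j-1 \equiv i$ and then fixed; commutation $f_i f_j = f_j f_i$ for non-adjacent $i, j$ holds because the collapse sets $\{k : k \equiv i+1 \pmod N\}$, $\{k : k \equiv j+1 \pmod N\}$ and their images are disjoint; and evaluating on the three residues $i, i+1, i+2$ shows that $f_i f_{i+1} f_i$, $f_{i+1} f_i f_{i+1}$, and $f_{i+1} f_i$ all equal the function collapsing both residues $i+1$ and $i+2$ onto $i$ and fixing the rest.

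For generation I would induct on the weight $w(f) := \sum_{k=1}^{N} (k - f(k))$, a nonnegative integer vanishing exactly when $f = \mathrm{id}$. Given non-identity $f \in \ANDPF_N$, my first task is to find a residue $i$ with $i \in \mathrm{Im}(f)$ and $i+1 \notin \mathrm{Im}(f)$. Such $i$ exists because $\mathrm{Im}(f)$ is $N$-periodic, and any surjective skew-periodic, order-preserving, regressive function must satisfy $f(k+1) = f(k) + 1$ identically, i.e.\ be a shift; since $f$ is not a shift, $\mathrm{Im}(f)$ is proper in $\ZZ / N$, so walking around the cycle yields the desired transition. Let $K = \{k : f(k) = i\}$, a nonempty interval by order-preservation, and define a skew-periodic function $g$ by
\[
g(k) := \begin{cases} i+1 & \text{if } k \in K \text{ and } k > i, \\ f(k) & \text{otherwise.}\end{cases}
\]
One checks $f = g \cdot f_i$ using that $f(k) \not\equiv i+1 \pmod N$ for any $k$ (since $i+1 \notin \mathrm{Im}(f)$), so $f_i$ acts trivially on $g(k) = f(k)$ for $k \notin K$.

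The remaining task, and the heart of the argument, is to verify $g \in \ANDPF_N$ and $w(g) < w(f)$. Regressivity of $g$ is automatic since $k > i$ forces $k \geq i+1$; order-preservation follows at the two boundaries of $K$ from $f(k_* - 1) < i$ and $f(k^* + 1) \geq i+2$ (the latter using that $i+1 \notin \mathrm{Im}(f)$); and a short case analysis on whether $k_* > i$ or $k_* = i$ rules out $g$ being a non-identity shift, using that $k_* = i$ forces $f(i+1) = i$ hence $i+1 \in K$. Then $K \cap (i, \infty)$ is nonempty in each case and $w(g) < w(f)$, closing the induction. I expect the main obstacle to be exactly this verification: the choice of which elements of $K$ to lift must be made carefully, since lifting only the maximal $k^*$ can accidentally produce a non-identity shift lying outside $\ANDPF_N$, while lifting too few breaks order-preservation; lifting precisely $K \cap (i, \infty)$ is what threads the needle.
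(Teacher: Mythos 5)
Your proof is correct, but it takes a genuinely different route from the paper's. The paper argues \emph{constructively from the bottom up}: it first builds a pointwise-maximal function $g_0$ whose fibers match those of $f$ (by lifting a reduced word from the finite $\NDPF_N$), and then runs a ``shoving'' loop that repeatedly post-composes with $f_i$'s to push images downward, with termination controlled by the statistic $\sum_{i=1}^N\bigl(g(i)-f(i)\bigr)$. You instead argue \emph{top-down by induction} on the weight $w(f)=\sum_{k=1}^N(k-f(k))$: you locate a boundary residue $i$ of $\mathrm{Im}(f)\pmod N$, lift the portion of $f^{-1}(i)$ above $i$ up to $i+1$ to produce $g$, and peel off a single generator via $f=g\cdot f_i$. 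Both approaches hinge on an essentially identical integer-valued potential, but they run in opposite directions; yours has the virtue of being a single clean induction with no two-phase structure, while the paper's version delivers slightly more explicit information (a reduced word for $f$ as a concatenation of a fiber-collecting word with a shoving word). Your observation that $k_*=\min K\geq i$ is forced by regressivity is a nice simplification the paper does not exploit.

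One point deserves a small correction in the writeup. As literally displayed, your $g$ modifies $f$ only on the single interval $K\cap(i,\infty)$ and leaves $f$ untouched on the translates $K+mN$ for $m\neq 0$; that function is \emph{not} skew-periodic, since $g(k+N)=f(k)+N=i+N$ while $g(k)+N=i+1+N$ for $k\in K\cap(i,\infty)$. Since the prose says ``define a skew-periodic function $g$,'' you clearly intend the skew-periodic extension, i.e.\ one should replace $K$ by $f^{-1}(i+N\ZZ)$ (equivalently, lift all $N$-translates of $K\cap(i,\infty)$). With that reading the identity $f=g\cdot f_i$, the membership $g\in\ANDPF_N$, and the strict drop $w(g)<w(f)$ all check out exactly as you argue, and the induction closes.
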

\begin{proof}
One can easily check that these functions $f_i$ satisfy the given relations.  We then check that any $f\in \ANDPF_N$ maybe written as a composition of the $f_i$.

Let $f \in \ANDPF_N$.  If there is no $j \in \{0, \ldots, N\}$ such that $f(j)=f(j+1)$, then $f$ is a shift function, and is thus the identity.

Otherwise, we have some $j$ such that $f(j)=f(j+1)$.  We can then build $f$ using $f_i$'s by the following procedure.  Notice that, if any $g \in \ANDPF$ has $g(j)=g(j+1)$ for some $j$, we can emulate a shift function by concatenating $g$ with $f_j f_{j+1} \cdots f_{j+N-1}$, where the subscripts are understood to be taken $(\text{mod }N)$.  In other words, we have:
\[
g \operatorname{sh}_1 = g f_j f_{j+1} \cdots f_{j+N-1}.
\]

Suppose, without loss of generality, that $f(N)\neq f(N+1)$, so that $N$ and $N+1$ are in different fibers of $f$, and $N$ is maximal in its fiber.  (If the ``break'' occurs elsewhere, we simply use that break as the `top' element for the purposes of our algorithm.  Alternately, we can apply the Dynkin automorphism to $f$ until $\rho^k f(N)\neq \rho^k f(N+1)$. for some $k$.  We can use this algorithm to construct $\rho^k f$, and then apply $\rho$ $N-k$ times to obtain $f$.)  Begin with $g = 1$, and construct $g$ algorithmically as follows.

\begin{itemize}
\item Collect together the fibers.  Set $g'$ to be the shortest element in $\NDPF_N$ such that the fibers of $g'$ match the fibers of $f$ in the base window.  Let $g_0$ be the affine function obtained from a reduced word for $g'$.  This is the pointwise maximal function in $\ANDPF_N$ with fibers equal to the fibers of $f$.

\item Now that the fibers are collected, post-compose $g_0$ with $f_i$'s to move the images into place.  We begin with $g:=g_0$ and apply the following loop:\\
\begin{align*}
&&\text{while } g\neq f: \\
&&\phantom{aaaa}\text{for } i \text{ in } \{1, \ldots, N \}: \\
&&\phantom{aaaaaaaa}\text{if } g(i+1)>f(i+1) \text{ and } g^{-1}(g(i+1)-1) = \emptyset: \\
&&\phantom{aaaaaaaaaaaa}g := g.f_i.
\end{align*}

This process clearly preserves the fibers of $g_0$ (which coincide with the fibers of $f$),  and terminates only if $g=f$.  We need to show that the algorithm eventually halts.

Recall that $g_0(i)\geq f(i)$ for all $i$, and then notice that it is impossible to obtain any $g$ in the evaluation of the algorithm with $g(i)<f(i)$, so that we always have $g(i)-f(i)>0$.  With each application of a $f_j$, the sum $\sum_{i=1}^N (g(i)-f(i))$ decreases by one.

Suppose the loop becomes stuck; then for every $i$ either $f(i+1)=g(i+1)$ or $g^{-1}(g(i+1)-1) \neq \emptyset$.  If there is no $i$ with $f(i+1)=g(i+1)$, then there must be some $i$ with $g^{-1}(g(i+1)-1) = \emptyset$, since $g(N)-g(1)\leq N$ and $g \neq 1$.  Then we can find a minimal $i \in \{1, \ldots, N\}$ with $f(i+1)=g(i+1)$.  

Now, find $j$ minimal such that $f(i+j) \neq g(i+j)$, so that $f(i+j-1) = g(i+j-1)$.  In particular, notice that $i+j-1$ and $i+j$ must be in different fibers for both $f$ and $g$.  If $g^{-1}(g(i+j)-1) = \emptyset$, then the loop would apply a $f_{i+j-1}$ to $g$, but the loop is stuck, so this does not occur and we have that $f(i+j-1)=g(i+j-1)=g(i+j)-1< f(i+j)\leq g(i+j) = g(i+j-1)+1$.  This then forces $g(i+j)=f(i+j)$, contradicting the condition on $j$.

Thus, the loop must eventually terminate, with $g=f$.
\end{itemize}

We have not yet shown that these relations are all of the relations in the monoid; this must wait until we have developed more of the combinatorics of $\ANDPF_N$.  In fact, $\ANDPF_N$ is a quotient of the $0$-Hecke monoid of $\ASn_N$ by the relations $\pi_i \pi_{i+1} \pi_i = \pi_i\pi_{i+1}$ for each $i \in I$, where subscripts are understood to be taken $\text{mod } N$.  To prove this (and simultaneously prove that we have in fact written all the relations in $\ANDPF_N$), we will define three maps, $P, Q$, and $R$ (illustrated in Figure~\ref{fig.affineNDPFMaps}).  The map $P: H_0(\ASn_N)\rightarrow \ANDPF_N$ is the algebraic quotient on generators sending $\pi_i \rightarrow f_i$.  The map $Q: H_0(\ASn_N)\rightarrow \ANDPF_N$ is a combinatorial algorithm that assigns an element of $\ANDPF_N$ to any affine permutation.  In Lemma~\ref{lem.combQuotient} we show that $P=Q$.  Additionally, we have already shown that $P$ is onto (since the $f_i$ generate $\ANDPF_N$), so $Q$ is onto as well.

The third map $R: \ANDPF_N \rightarrow H_0(\ASn_N)$ assigns a $[321]$-avoiding affine permutation to an $f\in \ANDPF_N$.  In fact, $R \circ P$ is the identity on the set of $[321]$-avoiding affine permutations, and $P\circ R$ is the identity on $\ANDPF_N$.  This then implies that there are no additional relations in $\ANDPF_N$.
\end{proof}

\begin{figure}
  \begin{center}
  \includegraphics[scale=.75]{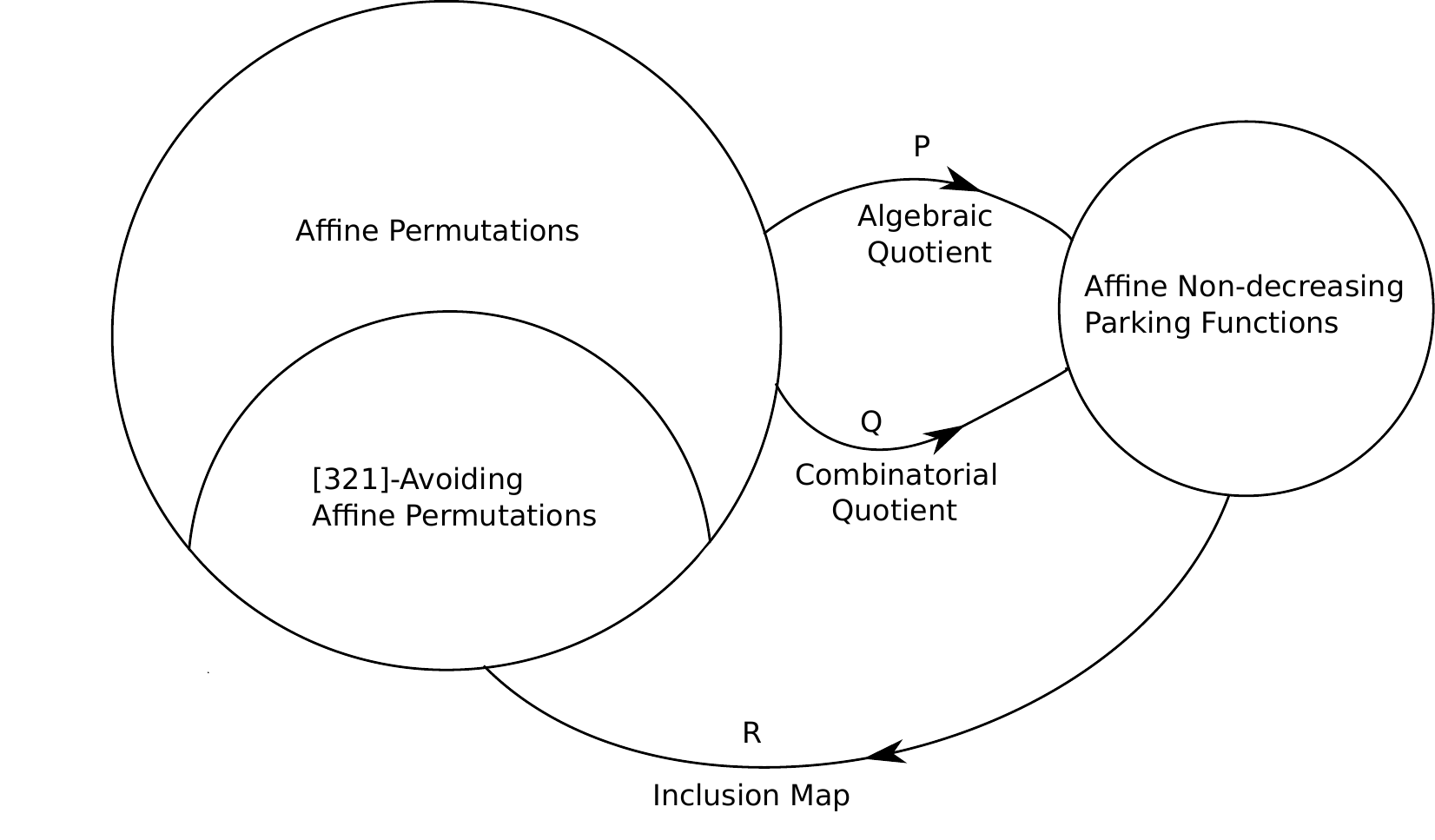}
  \end{center}
  \caption{Maps between $H_0(\ASn_N)$ and $\ANDPF_N$.}
  \label{fig.affineNDPFMaps}
\end{figure}

\begin{corollary}
The map $P: H_0(\ASn_N)\rightarrow \ANDPF_N$, defined by sending $\pi_i \rightarrow f_i$ and extending multiplicatively, is a monoid morphism.
\end{corollary}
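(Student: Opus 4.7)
The plan is to observe that $P$ is well-defined as a monoid morphism as soon as every defining relation of $H_0(\ASn_N)$ is satisfied after substituting $\pi_i \mapsto f_i$, since $H_0(\ASn_N)$ is presented by generators and relations and thus has the universal property that any such relation-respecting assignment of the generators extends uniquely to a morphism of monoids.

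Concretely, I would first recall the presentation: $H_0(\ASn_N)$ is generated by idempotents $\pi_i$ for $i \in \{0,1,\dots,N-1\}$ subject to $\pi_i^2 = \pi_i$, the commutation relations $\pi_i \pi_j = \pi_j \pi_i$ whenever the nodes $i$ and $j$ are not adjacent in the cyclic Dynkin diagram, and the braid relations $\pi_i \pi_{i+1} \pi_i = \pi_{i+1} \pi_i \pi_{i+1}$ (with all indices interpreted modulo $N$). I would then pair each of these relations with the corresponding relation proved for the $f_i$ in Proposition~\ref{andpfgens}: idempotence $f_i^2 = f_i$, commutation $f_i f_j = f_j f_i$ for non-adjacent $i, j$, and the stronger NDPF-braid identity $f_i f_{i+1} f_i = f_{i+1} f_i f_{i+1} = f_{i+1} f_i$. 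The middle equality in this chain is exactly the braid relation, so every defining relation of $H_0(\ASn_N)$ holds among the $f_i$.

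Consequently the assignment $\pi_i \mapsto f_i$ extends multiplicatively to a well-defined map $P: H_0(\ASn_N) \to \ANDPF_N$, which is a monoid morphism by construction. Surjectivity (not required by the corollary, but worth noting) is already contained in Proposition~\ref{andpfgens}, since the $f_i$ were shown there to generate $\ANDPF_N$.

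There is no real obstacle to this argument: the entire content is carried by Proposition~\ref{andpfgens}, and the corollary is essentially a repackaging of that fact via the universal property of the $H_0$-presentation. The only subtle point one might emphasize is that the extra NDPF-braid relation $f_i f_{i+1} f_i = f_{i+1} f_i$ does not prevent $P$ from being well-defined; it only means that $P$ is \emph{not} injective, which is exactly as desired since $\ANDPF_N$ is to be realized as a proper quotient of $H_0(\ASn_N)$ (a fact that will be established separately when the maps $Q$ and $R$ are introduced).
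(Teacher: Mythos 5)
Your proof is correct and takes essentially the same route as the paper: observe that Proposition~\ref{andpfgens} shows the $f_i$ satisfy all defining relations of $H_0(\ASn_N)$, so the assignment $\pi_i\mapsto f_i$ extends to a monoid morphism by the universal property of the presentation. The paper's own proof is a one-line version of exactly this argument.
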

\begin{proof}
The generators $f_i$ satisfy all relations in the $0$-Hecke algebra, so $P$ is a quotient of $H_0(\ASn_N)$ by whatever additional relations exist in $\ANDPF_N$.
\end{proof}

\begin{lemma}
Any function $f \in \ANDPF_N$ is entirely determined by its set of fibers, set of images, and one valuation $f(i)$ for some $i\in \ZZ$.
\end{lemma}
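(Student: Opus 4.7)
The plan is to decompose the data of $f\in\ANDPF_N$ into three pieces, (a) the partition of $\ZZ$ into fibers, (b) the set of image values, and (c) an integer "shift" matching the ordered fibers to the ordered images, and then to argue that specifying any single value $f(i_0)$ pins down this shift.

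First I would observe that each fiber of $f$ is a finite interval of consecutive integers. Convexity follows from order-preservation: if $a<c<b$ with $f(a)=f(b)$, then $f(a)\le f(c)\le f(b)=f(a)$ forces $f(c)=f(a)$. Finiteness follows from skew-periodicity, since $f(i+N)=f(i)+N>f(i)$, so no fiber can contain two integers differing by $N$ or more. Skew-periodicity also implies that the set of fibers and the set of images are each invariant under translation by $N$: if $F=f^{-1}(y)$ then $F+N=f^{-1}(y+N)$.

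Next I would enumerate the fibers in increasing order as $\cdots<F_{-1}<F_0<F_1<\cdots$ (using the total order induced from $\ZZ$, since the fibers are disjoint intervals), and the images in increasing order as $\cdots<y_{-1}<y_0<y_1<\cdots$. Let $p$ be the number of fibers meeting $\{1,\dots,N\}$; then by the periodicity above $F_{k+p}=F_k+N$ and $y_{k+p}=y_k+N$ (the two periods agree, since the order-preserving restriction of $f$ to fibers induces a bijection between one window of fibers and one window of images). Because $f$ restricts to an order-preserving bijection from the set of fibers onto the set of images, and any order-preserving bijection $\ZZ\to\ZZ$ is a translation, there must exist an integer $c$ with $f(F_k)=y_{k+c}$ for every $k$. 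Skew-periodicity is then automatic, because shifting $k$ by $p$ shifts both $F_k$ and $y_{k+c}$ by $N$.

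Finally I would conclude uniqueness. If $g\in\ANDPF_N$ shares the fiber partition and image set of $f$, then by the above $g(F_k)=y_{k+c'}$ for some $c'$. If additionally $f(i_0)=g(i_0)$ and $i_0\in F_{k_0}$, then $y_{k_0+c}=y_{k_0+c'}$, and since the $y_j$ are distinct this forces $c=c'$, hence $f=g$.

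The main step requiring care is the observation that the assignment from the ordered sequence of fibers to the ordered sequence of images must literally be an index shift $k\mapsto k+c$; this is where both the order-preservation of $f$ and the matching of periods $p$ on the two sides get used, and it is the key structural fact that reduces the whole function to a single integer parameter.
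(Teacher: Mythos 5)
Your proof is correct, and it is a careful, fully spelled-out version of what the paper dispatches in a single sentence (``This follows immediately from the fact that $f$ is regressive and order preserving''). The structural content you isolate --- that $f$ induces an order-preserving bijection between the fiber-intervals and the image set, that both are order-isomorphic to $\ZZ$ thanks to skew-periodicity, and that an order-preserving bijection between two $\ZZ$-ordered sets is an index translation pinned down by a single value --- is exactly the right reduction, and the uniqueness conclusion follows cleanly. One thing worth noting: your argument never actually invokes regressivity. You use order-preservation (fibers are intervals; the induced map on fibers is order-preserving) and skew-periodicity (fibers are finite, the ordered fibers and ordered images both have order type $\ZZ$), but the bound $f(i)\le i$ plays no role. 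The paper's one-line justification mentions regressivity, but as your proof shows, it is not needed for this particular lemma; regressivity is a defining property of $\ANDPF_N$, so the paper is presumably just gesturing at the definition rather than claiming regressivity is essential here. A very minor stylistic point: your parenthetical definition of $p$ as ``the number of fibers meeting $\{1,\dots,N\}$'' can be off by one depending on how fibers straddle the window boundary; it is cleaner to define $p$ directly as the period of the translation-by-$N$ action on the indexed fibers, i.e.\ the unique $p$ with $F_{k+p}=F_k+N$. But this does not affect the correctness of the argument, which in any case only needs that both ordered sequences have order type $\ZZ$, not that their periods match.
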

\begin{proof}
This follows immediately from the fact that $f$ is regressive and order preserving.
\end{proof}

\begin{lemma}
Let $f \in \ANDPF_N$, and $F_f=\{m_j\}$ be the set of maximal elements of the fibers of $f$.  Each pair of distinct elements $m_j, m_k$ of the set $F_f \cap \{1, 2, \ldots, N\}$ has $f(m_j) \not \equiv f(m_k) (\text{mod } N)$.
\end{lemma}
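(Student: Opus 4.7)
The plan is a short argument by contradiction using the skew-periodicity axiom. Suppose $m_j, m_k \in F_f \cap \{1, \ldots, N\}$ are distinct with $f(m_j) \equiv f(m_k) \pmod{N}$. After relabelling I may assume $m_j < m_k$. Since $m_j$ and $m_k$ are maximal elements of distinct fibers, the order-preserving condition forces $f(m_j) < f(m_k)$, and combined with the mod-$N$ congruence this yields $f(m_k) - f(m_j) = tN$ for some integer $t \geq 1$.

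Now I would apply skew-periodicity directly:
\[
f(m_j + tN) \;=\; f(m_j) + tN \;=\; f(m_k),
\]
so the integer $m_j + tN$ lies in the same fiber as $m_k$. On the other hand, since $m_j \geq 1$, $t \geq 1$, and $m_k \leq N$, we have $m_j + tN \geq 1 + N > N \geq m_k$, so $m_j + tN$ is strictly larger than $m_k$. This contradicts the maximality of $m_k$ in its fiber, and establishes the lemma.

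The one subtlety I want to flag is that one must know $m_k$ is the \emph{global} maximum of its fiber, not merely a maximal element within the window $\{1, \ldots, N\}$. This follows because each fiber is an interval of $\mathbb{Z}$ (by order-preservation) and is bounded (skew-periodicity prevents any two elements $x, x + kN$ with $k \neq 0$ from landing in the same fiber, since $f(x+kN) = f(x) + kN \neq f(x)$); hence each fiber has a well-defined global maximum, and those maxima within $\{1,\ldots,N\}$ are exactly the elements of $F_f \cap \{1,\ldots,N\}$. Beyond this observation, no case analysis is required, and the result is immediate.
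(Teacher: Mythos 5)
Your proof is correct and rests on the same core mechanism as the paper's: the mod-$N$ congruence, fed through skew-periodicity, places a translate of one fiber-maximum inside the other's fiber, giving a contradiction. The difference is in how the contradiction is reached. The paper writes $f(m_j)-f(m_i)=kN$, invokes the earlier bound $f(N)-f(1)\leq N$ to conclude $k=0$, and then says the two points share a fiber. That step is slightly too quick: the bound only forces $|k|\leq 1$, and the boundary case $k=\pm 1$ is not ruled out by the inequality alone. You bypass this entirely by taking $t\geq 1$ head-on and showing $m_j+tN$ lies in the fiber of $m_k$ while exceeding $m_k$, contradicting maximality — which is exactly the observation needed to patch the paper's $k=\pm 1$ case. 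So your route is both a little more elementary (it never needs the $f(N)-f(1)\leq N$ bound) and a little more airtight. Your flagged subtlety, that each fiber is a bounded interval with a genuine global maximum, is a correct and sensible thing to check, though the paper leaves it implicit.
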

\begin{proof}
Suppose not.  Then $f(m_j)-f(m_i) = kN$ for some $k \in \ZZ$, implying that 
$f(m_j)=f(m_i+kN)$.  Since $f(m_j)-f(m_i)\leq N$, we must have $k=0$.  But then $m_j$ and $m_i$ are in the same fiber, providing a contradiction.
\end{proof}

\begin{theorem}
$\ANDPF_N$ is $\JJ$-trivial.
\end{theorem}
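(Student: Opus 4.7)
The plan is to prove $\JJ$-triviality by exhibiting a single nonnegative statistic that monotonically records both left and right multiplication and is rigid enough that equality of the statistic forces equality of the elements. For $f \in \ANDPF_N$, I set
\[
\sigma(f) := \sum_{i=1}^{N}\bigl(i - f(i)\bigr),
\]
which is a nonnegative integer by regressivity.

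First I would establish two companion monotonicity lemmas. For any $a, b, h \in \ANDPF_N$, direct computation in the right-action convention gives
\[
\sigma(ah) - \sigma(h) \;=\; \sum_{i=1}^N \bigl(h(i) - h(a(i))\bigr), \qquad \sigma(hb) - \sigma(h) \;=\; \sum_{i=1}^N \bigl(h(i) - b(h(i))\bigr).
\]
Both right-hand sides are sums of nonnegative terms: the first uses regressivity of $a$ combined with the fact that $h$ is order-preserving, the second uses regressivity of $b$. The essential content is the equality case. If $\sigma(ah) = \sigma(h)$, then $h(a(i)) = h(i)$ for every $i \in \{1, \ldots, N\}$, and the skew-periodicity of $a$ and $h$ promotes this to $ah = h$ as functions on $\ZZ$. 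Similarly, $\sigma(hb) = \sigma(h)$ forces $b$ to fix each of $h(1), \ldots, h(N)$, and hence, via skew-periodicity of $b$, all of $\operatorname{Im}(h)$, so that $hb = h$.

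With these lemmas in hand, the $\JJ$-triviality argument is short. Suppose $f$ and $g$ lie in the same $\JJ$-class, so that $f = agb$ and $g = cfd$ for some $a, b, c, d \in \ANDPF_N$. Chaining monotonicity gives $\sigma(f) = \sigma(agb) \geq \sigma(gb) \geq \sigma(g)$, and the symmetric chain applied to $g = cfd$ gives $\sigma(g) \geq \sigma(f)$, so equality must hold throughout. From $\sigma(gb) = \sigma(g)$ the right-multiplication lemma yields $gb = g$, and from $\sigma(a\cdot gb) = \sigma(gb)$ the left-multiplication lemma (applied with $h := gb$) yields $a(gb) = gb$. Substituting $gb = g$ in the latter gives $ag = g$, and therefore $f = agb = gb = g$, which is what we wanted.

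The principal obstacle is locating the correct statistic: more naive candidates inherited from the finite case --- word length in the generators, number of distinct fibers meeting the base window, or sums without the regressivity framing --- are not obviously both monotone and rigid under affine skew-periodicity. Once $\sigma$ is in hand, the only step requiring any care is the promotion of base-window identities to identities on all of $\ZZ$, and this is immediate from the skew-periodicity built into the definition of $\ANDPF_N$.
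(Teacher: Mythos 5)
Your proof is correct, and it leans on exactly the same structural feature the paper exploits: regressivity of the factors makes $\JJ$-related elements comparable. But the packaging is different enough to be worth noting. The paper works directly with the pointwise partial order on functions: since $a$ and $b$ are regressive and $f$ is order-preserving, any $g = afb \in MfM$ satisfies $g(i) \leq f(i)$ for every $i \in \ZZ$, so $MfM = MgM$ forces $g \leq f$ and $f \leq g$ pointwise, hence $f = g$ — a one-line argument. You instead collapse the pointwise comparison into the scalar deficiency statistic $\sigma(f) = \sum_{i=1}^{N}(i - f(i))$, prove monotonicity under left and right multiplication, and then recover the pointwise conclusion from the equality case via skew-periodicity. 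Both are sound, and your two monotonicity identities are correctly stated (the $ah$ case needs $h$ order-preserving and $a$ regressive; the $hb$ case needs only $b$ regressive), and your promotion of base-window identities to all of $\ZZ$ by skew-periodicity is the right and necessary step. The tradeoff: the paper's pointwise-order argument is shorter and avoids the equality-case bookkeeping entirely (it never needs to conclude $gb = g$ or $ag = g$ as intermediate facts), while your scalar statistic has the modest advantage of being a single $\N$-valued invariant that certifies $\JJ$-triviality in one shot — a form that is sometimes more convenient when one wants an explicit length-like grading on the $\JJ$-order. Nothing is wrong; you just took the scenic route through a summable surrogate rather than using the pointwise order that regressivity hands you directly.
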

\begin{proof}
Thi is a direct consequence of the regressiveness of functions in $\ANDPF_N$.  Let $M:= \ANDPF_N$, and $f\in M$.  Then each $g\in MfM$ has $g(i)\leq f(i)$ for all $i \in \ZZ$.  Thus, if $MgM=MfM$, we must have $f=g$.  Then the $\JJ$-equivalence classes of $M$ are trivial, so $\ANDPF_N$ is $\JJ$-trivial.
\end{proof}

Note that $\ANDPF_N$ is not aperiodic in the sense of a finite monoid.  (Aperiodicity was defined in Section~\ref{sec:bgnot}.)  Take the function $f$ where $f(i)=0$ for all $i \in \{1, \ldots, N\}$.  Then $f^k(1) = (1-k)N$, so there is no $k$ such that $f^k = f^{k+1}$.

\subsection{Combinatorial Quotient}
\label{subsec:combintorialQuotient}

A direct combinatorial map from affine permutations to $\ANDPF_N$ is now discussed.  This map directly constructs a function $f$ from an arbitrary affine permutation $x$, with the same effect as applying the algebraic $\ANDPF$ quotient to the $0$-Hecke monoid element indexed by $x$.  We first define the combinatorial quotient in the finite case and provide an example (Figure~\ref{fig.combQuotient}).

\begin{definition}
The combinatorial quotient $Q_{cl}: H_0(S_N) \rightarrow \NDPF_N$ is given by the following algorithm, which assigns a function $f$ to a permutation $x$.
\begin{enumerate}
\item Set $f(N):=x(N)$.
\item Suppose $i$ is maximal such that $f(i)$ is not yet defined.  If $x(i)<x(i+1)$, set $f(i):=f(i+1)$.  Otherwise, set $f(i):=x(i)$.
\end{enumerate}
\end{definition}

\begin{figure}
  \begin{center}
  \includegraphics[scale=1]{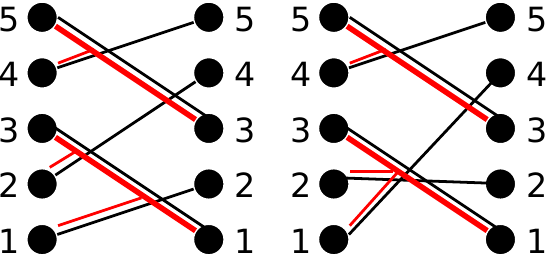}
  \end{center}
  \caption{Example of the combinatorial quotient $Q_{cl}: H_0(S_5) \rightarrow \NDPF_5$.  The string diagram is read left-to-right, with the permutation illustrated with black strings and the image function drawn in red.  The permutation in the left diagram, then, is $x=[2,4,1,5,3]$ and $Q_{cl}(x)$ is the function $f=[1,1,1,3,3]$.  For the permutation on the right, we have $y=[4,2,1,5,3]$ and $Q_{cl}(y)=Q_{cl}(x)=[1,1,1,3,3]$.  Notice that these two permutations $x$ and $y$ are related by turning the $[321]$-pattern in $y$ into a $[231]$-pattern in $x$, preserving the fiber of $Q$.}
  \label{fig.combQuotient}
\end{figure}

Note that the map $Q_{cl}$ is closely related to bijection of Simion and Schmidt between $[132]$-avoiding permutations and $[123]$-avoiding permutations~\cite{simion.schmidt.1985}.  (The bijection is also covered very nicely in~\cite{bona.permutations})  This bijection operates by marking all left-to-right minima (ie, elements smaller than all elements to their left) of a $[132]$-avoiding permutation, and then reverse-sorting all elements which are not marked. The resulting permutation is $[123]$-avoiding.  For example, the permutation
$[\textbf{5}, 6, \textbf{4}, 7, \textbf{1}, 2, 3]$ avoids the pattern $[132]$; the bold entries are the left-to-right minima.  Sorting the non-bold entries, one obtains the permutation
$[\textbf{5}, 7, \textbf{4}, 6, \textbf{1}, 3, 2]$, which avoids the permutation $[123]$.  Notice that the bold entries are still left-to-right minima after anti-sorting the other entries.

The patterns $[231]$ and $[123]$ are the respective ``reverses'' of the patterns $[132]$ and $[321]$, obtained by simply reversing the one-line notation.  It is trivial to observe that $x$ avoids $p$ if and only if the reverse of $x$ avoids the reverse of $p$.  Then the ``reverse'' of the Simion-Schmidt algorithm (which marks right-to-left minima, and sorts the other entries) gives a bijection between $[231]$- and $[321]$-avoiding permutations; in fact, this is the same bijection given by the fibers of the $\NDPF$ quotient of the $0$-Hecke monoid.

A similar combinatorial quotient may be defined from $\ASn_N\rightarrow \ANDPF_N$, generalizing the map $Q_{cl}$.  This map will assign a function $f$ to an affine permutation $x$.

Below, we will show that each fiber of the map $Q$ contains a unique $[321]$-avoiding affine permutation (Theorem~\ref{thm:affNdpfFibers321}).  However, it is too much to expect a bijection between affine $[231]$- and $[321]$-avoiding permutations.  By a result of Crites, there are infinitely many affine permutations that avoid a pattern $\sigma$ if and only if $\sigma$ contains the pattern $[321]$~\cite{Crites.2010}.  Thus, there are infinitely many $[321]$-avoiding affine permutations, but only finitely many $[231]$-avoiding affine permutations.

We first identify some $k \in \{1, 2, \ldots, N\}$ such that for every $j>k$, $x(j)>x(k)$.  

\begin{lemma}
Let $k_0 \in \{1, 2, \ldots, N\}$ have $x(k_0)\leq x(m)$ for every $m \in \{1, 2, \ldots, N\}$.  Then for every $j>k_0$, $x(j)>x(k_0)$.
\end{lemma}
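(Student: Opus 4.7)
The plan is to split on whether $j$ lies in the base window $\{1,\dots,N\}$ or outside of it, and in both cases reduce to the hypothesis that $x(k_0)$ is minimal on the base window.

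First I would handle the easy range $k_0 < j \leq N$. By the minimality hypothesis, $x(j) \geq x(k_0)$. Since $x$ is a bijection on $\mathbb{Z}$ and $j \neq k_0$, we have $x(j) \neq x(k_0)$, so $x(j) > x(k_0)$, as required.

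Next I would handle the range $j > N$. Using the division algorithm, write $j = j' + tN$ with $j' \in \{1,\dots,N\}$ and $t \geq 1$. By skew-periodicity, $x(j) = x(j') + tN$. By the minimality hypothesis applied to $j'$, we have $x(j') \geq x(k_0)$, hence
\[
x(j) = x(j') + tN \geq x(k_0) + tN > x(k_0),
\]
since $t \geq 1$ and $N \geq 1$. This finishes the argument.

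There is no real obstacle here: the lemma is essentially a bookkeeping consequence of (i) minimality of $x(k_0)$ on a window and (ii) the skew-periodic rule $x(i+N) = x(i) + N$, which automatically boosts any value $N$ steps to the right by $+N$. The only mild care needed is to invoke injectivity of $x$ to upgrade $\geq$ to $>$ in the first case.
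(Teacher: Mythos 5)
Your proof is correct. The paper proves the same statement by contradiction: it assumes $x(j) < x(k_0)$ for some $j > k_0$, picks $p$ so that $j - pN$ lands in the base window $\{1,\dots,N\}$, and uses skew-periodicity ($x(j-pN) = x(j) - pN \le x(j) < x(k_0)$) to contradict minimality. Your direct argument with the case split $k_0 < j \le N$ versus $j > N$ uses exactly the same two ingredients (skew-periodicity and minimality on the base window), so the approaches are essentially equivalent; if anything, yours is slightly more careful in explicitly invoking injectivity of $x$ to get strictness in the first case, a point the paper leaves implicit.
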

\begin{proof}
 Suppose $j>k_0$ with $x(j)<x(k_0)$.  Then there exists $p\in \N$ such that $j-pN \in \{1, 2, \ldots, N\}$, so that $x(j-pN) = x(j)-pN < x(k_0)$, contradicting the minimality of $x(k_0)$.
\end{proof}

Now the affine combinatorial quotient is defined by the following algorithm.

\begin{definition}
The combinatorial quotient $Q: H_0(\ASn_N) \rightarrow \ANDPF_N$ is given by the following algorithm, which assigns a function $f$ to an affine permutation $x$.
\begin{enumerate}
\item Let $k_0 \in \{1, 2, \ldots, N\}$ have $x(k_0)\leq x(m)$ for every $m \in \{1, 2, \ldots, N\}$.  Set $f(k_0)=x(k_0)$.
\item Choose $i \in \{1, 2, \ldots, N-1\}$ minimal such that $f(k_0 - i)$ is not yet defined.  If $x(k_0-i+1)<x(k_0-i)$, set $f(k_0-i) := f(k_0-i+1)$.  Otherwise, set $f(k_0-i) := x(k_0-i)$.
\item Define $f$ on all other $i$ using skew periodicity.
\end{enumerate}
\end{definition}

\begin{lemma}
\label{lem.combQuotient}
The affine combinatorial quotient $Q$ agrees with the algebraic $\ANDPF$ quotient $P$. 
\end{lemma}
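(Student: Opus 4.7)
The plan is induction on $\len(w)$. The base case $w = 1$ is direct: running the algorithm on the identity affine permutation produces the identity function $j \mapsto j$ (the conditional in Step 2 never triggers), matching $P(1) = 1 \in \ANDPF_N$.

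For the inductive step, I would fix a reduced factorization $w = w' s_i$ with $\len(w') = \len(w) - 1$, so that $w'$ has a right ascent at $i$ and $\pi_w = \pi_{w'} \pi_i$ in $H_0(\ASn_N)$. By the inductive hypothesis $Q(w') = P(\pi_{w'})$, hence $P(\pi_w) = P(\pi_{w'}) \cdot f_i = Q(w') \cdot f_i$, and the proof reduces to showing the single identity
\[
Q(w' s_i) \;=\; Q(w') \cdot f_i.
\]

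The key observation is that $w' s_i$ differs from $w'$ only by the local swap of values at positions $i$ and $i+1$ (together with its skew-periodic translates). Since $f_i$ acts as the identity on every value not congruent to $i+1 \pmod{N}$ and sends $i+1$ to $i$, I would verify position-by-position that $Q(w' s_i)(j) = Q(w')(j)$ whenever $Q(w')(j) \not\equiv i+1 \pmod{N}$, and $Q(w' s_i)(j) = Q(w')(j) - 1$ whenever $Q(w')(j) \equiv i+1 \pmod{N}$. The argument proceeds by a case analysis tracking the running-minimum structure that determines $Q$: in particular, whether $w'(i+1)$ is a right-to-left minimum of $w'$ before the swap, and at which position (if any) the value $i+1 \pmod{N}$ enters the image of $Q(w')$. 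Away from these critical indices the outputs coincide because the algorithm's decisions depend only on local comparisons between unchanged values.

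The hard part will be the affine-specific subtlety that the swap can shift the minimum position $k_0 \in \{1, \ldots, N\}$ anchoring the algorithm: this occurs exactly when the swap exchanges the minimum value of $w'$ in the base window with a neighboring value, forcing $Q(w')$ and $Q(w' s_i)$ to be computed from different anchors. Here I would exploit the skew-periodicity of both outputs to compare them on a common period of length $N$ and confirm that they still differ only by the local modification dictated by $f_i$. Once this case is handled, the inductive step closes, establishing $Q(w) = P(\pi_w)$ for every $w \in \ASn_N$.
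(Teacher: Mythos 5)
Your high-level strategy coincides with the paper's: both induct on length, peel off a rightmost generator so that the claim reduces to the single identity $Q(w's_i) = Q(w')\cdot f_i$ (using the inductive hypothesis $Q(w')=P(\pi_{w'})$ and the fact that $P$ is a morphism), and check the base cases $Q(1)=P(1)$ and $Q(\pi_i)=P(\pi_i)=f_i$ directly. Where you diverge is in how you propose to organize the one-step verification, and this is exactly where your proposal stops short of a proof. You plan to casework on the algorithm itself -- which position carries the running minimum, whether $w'(i+1)$ is a right-to-left minimum, and whether the anchor $k_0$ shifts after the swap -- and you correctly flag the anchor shift as the delicate affine-specific point, but you do not actually carry out this analysis, and it is precisely the content of the lemma. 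The paper avoids re-running the algorithm by doing the casework instead on the fiber structure of the \emph{output} function $f = Q(w')$ at positions $i$ and $i+1$: (a) $i$ and $i+1$ in the same fiber with $i+1$ maximal is impossible given that $s_i$ is an ascent of $w'$; (b) $i$ and $i+1$ in the same fiber with $i+1$ not maximal gives $Q(w's_i) = f = ff_i$; (c) $i$ and $i+1$ in different fibers gives $Q(w's_i)$ by merging $i+1$'s fiber into $i$'s, which again equals $ff_i$. Because this comparison is phrased entirely in terms of what $f$ does near positions $i$ and $i+1$ rather than in terms of where the algorithm's anchor sits, the anchor-shift subtlety you identify as the hard part never needs to be confronted directly. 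If you want to complete the proof along your lines, you would need to prove that $Q(w's_i)$ and $Q(w')$ agree off the residue classes of $i$ and $i+1$ even when the anchor moves, which is true but is essentially the same work as the paper's case (c) argued in a less convenient coordinate system.
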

\begin{proof}
We denote the combinatorial quotient by $Q$ and the algebraic quotient by $P$.  

One can easily check that $Q(1)=P(1)=1$, and $Q(\pi_i)=P(\pi_i)=f_i$.  Since $P$ is a monoid morphism, we have that $P(x\pi_i)=P(x)P(\pi_i)= ff_i$.   We then assume that $Q(x)=P(x)=f$, and consider $Q(x\pi_i)$.  We will show that $Q(x\pi_i)= Q(x)f_i = ff_i = P(x\pi_i)$.  

If $\pi_i$ is a right descent of $x$ then $Q(x\pi_i)=Q(x)=f=P(x\pi_i)$, and we are done.

If $\pi_i$ is not a right descent of $x$, we have $x(kN+i) < x(kN+i+1)$ for all $k\in \ZZ$, and 
\[
x\pi_i(j) = 
    \left\{
     \begin{array}{lr}
       x(j)   \text{ for all } j\not \equiv i, i+1 (\text{mod } N)\\
       x(j+1) \text{ for all } j     \equiv i      (\text{mod } N)\\
       x(j-1) \text{ for all } j     \equiv    i+1 (\text{mod } N)\\
     \end{array}
   \right.
\]  
We examine the functions $Q(x\pi_i)$ and $ff_i$ on $i$ and $i+1$, since these functions are equal on $j\not \equiv i, i+1 (\text{mod } N)$, and the actions on $i$ and $i+1$ then determine the functions on all $j \equiv i, i+1 (\text{mod } N)$. 

We consider two cases, depending on whether $i$ and $i+1$ are in the same fiber of $f$.  

\begin{itemize}
\item If $i$ and $i+1$ are in the same fiber of $f$ and $i+1$ is maximal in this fiber, we must (by construction of $Q$) have $x(i+1)<x(i)$, contradicting the assumption that $\pi_i$ was not a right descent of $x$.

\item If $i$ and $i+1$ are in the same fiber of $f$ and $i+1$ is not maximal in this fiber, then there exists some (minimal) $m> i+1>i$ with $x(m)<x(i)$ and $x(m)<x(i+1)$, maximal in the fiber of $i$ and $i+1$.  Then $x(m)<x(i+1)=x\pi_i(i)$ and $x(m)<x(i)=x\pi_i(i+1)$.  Since the maximal size of a fiber of $f$ is $N$, we have that $m-i\leq N$.  Then (since $i+1$ not maximal in the fiber of $f$) $m\not \equiv i+1 (\text{mod } N)$.

If $m \equiv i (\text{mod } N)$, then $i$ is maximal in its fiber, and we must have $i$ and $i+1$ in different fibers, contrary to assumption.

If $m\not \equiv i (\text{mod } N)$, we have $x(m)=x\pi_i(m) < x\pi_i(i), x\pi_i(i+1)$, and so by the construction of $Q$, we have 
$Q(x\pi_i)(i)=Q(x\pi_i)(i+1)=Q(x\pi_i)(m)=Q(x)(m) = x(m)$.  Then in this case, $Q(x\pi_i)=f$.

On the other hand, $ff_i(i)=f(i)=f(m)=ff_i(m)$, and $ff_i(i+1)=f(i)=f(m)=ff_i(m)$, so $ff_i=f$.

\item If $i$ and $i+1$ are in different fibers of $f$, then we have $i$ maximal in its fiber, and take $m$ (possibly equal to $i+1$) to be the maximal element of the fiber in which $i+1$ sits.  We note that if $m\equiv i+1 (\text{mod } N)$, then we must have $i$ and $i+1$ in the same fiber, reducing to the previous case.

Otherwise, applying the construction of $Q$, we find that $Q(x\pi_i)(i+1) = x(i)$, and that $Q(x\pi_i)(i) = x(i)$; thus $i+1$ is removed from its fiber and merged into the fiber with $i$.  The resulting function is equal to $ff_i$.
\end{itemize}
This exhausts all cases, completing the proof.
\end{proof}

\begin{corollary}
The finite type combinatorial quotient agrees with the $\NDPF_N$ quotient of $H_0(S_N)$ obtained by introducing the relations $\pi_i \pi_{i+1}\pi_i = \pi_{i+1} \pi_i$ for $i\in \{1, \ldots, N-2\}$. 
\end{corollary}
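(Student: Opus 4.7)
The plan is to deduce the corollary by restricting Lemma~\ref{lem.combQuotient} to the finite setting. First I would identify $H_0(S_N)$ with the submonoid of $H_0(\ASn_N)$ generated by $\pi_1, \ldots, \pi_{N-1}$ (this embedding is faithful because the only relations among these generators in $H_0(\ASn_N)$ are precisely the finite-type relations), and $\NDPF_N$ with the submonoid of $\ANDPF_N$ generated by $f_1, \ldots, f_{N-1}$. Under these identifications, the algebraic quotient of $H_0(S_N)$ by the relations $\pi_i\pi_{i+1}\pi_i = \pi_{i+1}\pi_i$ sending $\pi_i \mapsto f_i$ coincides with the restriction of $P$ to $H_0(S_N)$.

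The next step is to show that the restriction of the affine combinatorial quotient $Q$ to $H_0(S_N)$ agrees with $Q_{cl}$. For $x \in S_N$ the window $(x(1), \ldots, x(N))$ is a genuine permutation of $\{1, \ldots, N\}$, so the minimum on this window is $1$, occurring at a unique position $k_0 = x^{-1}(1)$. Starting from $f(k_0) = 1$ and walking backward through the positions $k_0 - 1, k_0 - 2, \ldots, k_0 - (N-1)$ via skew periodicity, the affine algorithm's decision at each step depends only on the local ascent/descent pattern of $x$, which matches the pattern used by $Q_{cl}$ walking backward from position $N$. A short check shows that the two orders of traversal produce identical regressive, order-preserving functions on $\{1, \ldots, N\}$, since the assignment rule is local and both traversals cover the same set of adjacent pairs exactly once.

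Once these identifications are in hand, the corollary is immediate: for any $x \in H_0(S_N)$ we have $Q_{cl}(x) = Q(x) = P(x) = P_{cl}(x)$ by Lemma~\ref{lem.combQuotient}, where $P_{cl}$ denotes the algebraic finite-type quotient. Alternatively, one can re-run the inductive argument of that lemma verbatim in the finite setting, since every case analysis in it (descent vs.\ non-descent, same fiber vs.\ different fibers) is purely local and does not invoke skew periodicity in an essential way. The main piece of genuine work is the reconciliation of the two starting conventions for the combinatorial algorithms; this is bookkeeping rather than a real obstacle, since all the substantive content was established in Lemma~\ref{lem.combQuotient}.
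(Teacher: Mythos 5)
Your proof follows the paper's approach exactly: the corollary is obtained by parabolic restriction of Lemma~\ref{lem.combQuotient}, identifying $H_0(S_N)$ with the parabolic submonoid of $H_0(\ASn_N)$ generated by $\pi_1,\dots,\pi_{N-1}$ and $\NDPF_N$ with the corresponding submonoid of $\ANDPF_N$, so that $Q_{cl}=Q=P$ on $H_0(S_N)$. The paper's own proof is terser (it simply invokes ``parabolic restriction to the finite case''), whereas you are more explicit about the one step that does require a check, namely that the affine combinatorial algorithm $Q$, started from $k_0=x^{-1}(1)$ and traversed backward through the window, produces the same function as the finite algorithm $Q_{cl}$ started from position $N$; you correctly flag this as the only piece of the deduction that is not purely formal, although you assert it rather than verify it. Since the paper leaves the same verification implicit, your proposal and the paper's proof are at the same level of rigor and share the same logical skeleton.
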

\begin{proof}
This follows immediately from Lemma~\ref{lem.combQuotient} by parabolic restriction to the finite case.  In the finite case, the index set is $\{1, 2, \ldots, N-1\}$, so we must have $i\in \{1, \ldots, N-2\}$.
\end{proof}

\subsection{Affine $[321]$-Avoidance}

An affine permutation $x$ avoids a pattern $\sigma \in S_k$ if there is no subsequence of $x$ in the same relative order as $\sigma$.  This ostensibly means that an infinite check is necessary, however one may show that only a finite number of comparisons is necessary to determine if $x$ contains a $[321]$-pattern.  The following lemma is equivalent to~\cite[Lemma 2.6]{Green.2002}.

\begin{lemma}
Let $x$ contain at least one $[321]$-pattern, with $x_i> x_j> x_k$ and $i<j<k$.  Then $x$ contains a $[321]$-pattern $x_{i'}> x_j> x_{k'}$ such that $i\leq i'<j<k'\leq k$, $j-i'< N$, and $k'-j < N$.
\end{lemma}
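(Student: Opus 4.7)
The plan is to exploit the skew-periodicity $x(n+N)=x(n)+N$ to translate the outer positions $i$ and $k$ toward $j$ by appropriate multiples of $N$, while preserving the $[321]$-pattern structure with middle entry fixed at $j$.

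First I would construct $i'$. Let $i'$ be the unique integer with $i'\equiv i\pmod{N}$ and $j-N\leq i'\leq j-1$. I claim three things: (a) $i'\geq i$; (b) $x(i')>x(j)$; (c) $i'>j-N$, so that $j-i'<N$. For (a), write $i'=i+mN$ for some $m\in\ZZ$. Since $i<j$, either $i$ already lies in $[j-N,j-1]$ (giving $m=0$) or $i<j-N$ (forcing $m\geq 1$); in either case $m\geq 0$ and $i'\geq i$. For (b), skew-periodicity gives $x(i')=x(i)+mN\geq x(i)>x(j)$. For (c), note first that $i'\neq j$: otherwise $i\equiv j\pmod N$ and $j>i$ would force $x(j)=x(i)+(j-i)>x(i)$, contradicting $x(i)>x(j)$. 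And $i'\neq j-N$: otherwise $x(i')=x(j)-N<x(j)$, contradicting (b). Hence $j-N<i'<j$ as required.

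Next I would construct $k'$ by a symmetric argument. Let $k'$ be the unique integer with $k'\equiv k\pmod N$ and $j+1\leq k'\leq j+N$. Writing $k'=k-mN$ for some integer $m\geq 0$ (by the same case analysis applied to $k>j$), we obtain $k'\leq k$ and $x(k')=x(k)-mN\leq x(k)<x(j)$. To rule out $k'=j+N$: this would give $k\equiv j\pmod N$ and $k>j$, hence $x(k)=x(j)+(k-j)>x(j)$, contradicting $x(k)<x(j)$. Thus $k'<j+N$, i.e.\ $k'-j<N$.

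Combining, $i\leq i'<j<k'\leq k$ with $j-i'<N$, $k'-j<N$, and $x(i')>x(j)>x(k')$, which is the required $[321]$-pattern.

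The argument is essentially bookkeeping with residues modulo $N$, so there is no real obstacle. The only point that requires a little care is verifying that the translated positions $i'=i+mN$ and $k'=k-mN$ cannot collide with $j$ or land exactly at the boundary (at distance $N$), and both of these are ruled out by combining skew-periodicity with the strict inequalities $x(i)>x(j)>x(k)$.
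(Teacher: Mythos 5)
Your proof is correct, and it uses essentially the same idea as the paper's: translate the outer positions of a $[321]$-instance toward $j$ by multiples of $N$ using skew-periodicity, which preserves the value inequalities while reducing the index gaps. You merely make explicit the boundary bookkeeping (ruling out $i'=j-N$ and $k'=j+N$ via the fact that $i\equiv j\pmod N$ or $k\equiv j\pmod N$ would contradict $x(i)>x(j)>x(k)$), which the paper leaves implicit when it says to choose the maximal shift keeping the gap below $N$.
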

\begin{proof}
We have $x_j>x_k>x_{k-aN} = x_k - aN$ for $a \in \N$, so if $k-j>N$, we can find a $[321]$ pattern replacing $x_k$ with $x_{k-aN}$.  A similar argument allows us to replace $i$ with $i+bN$ for the maximal $b\in \N$ such that $j-(i+bN) < N$.
\end{proof}

\begin{figure}
  \begin{center}
  \includegraphics[scale=1]{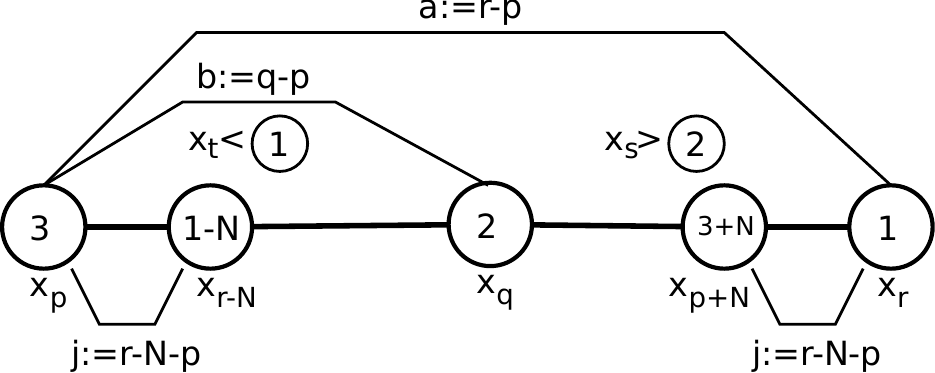}
  \end{center}
  \caption{Diagram of a bountiful width system for the pattern $[321]$ for affine permutations.  The pattern occurs at positions $(x_p, x_q, x_r)$, with width system given by $(r-p, q-p)$.  In the case where $r-p>N$, there is an `overlap' of $j=r-N-p$.  Bountifulness of the width system ensures that the elements in the overlap may be moved moved out of the interior of the pattern instance by a sequence of simple transpositions, each decreasing the length of the permutation by one, just as in the non-affine case.}
  \label{fig.minimalAffine321}
\end{figure}

As noted by Green, one can then check whether an affine permutation contains a $[321]$-pattern using at most $\binom{N}{3}$ comparisons.  Green also showed that any affine permutation containing a $[321]$-pattern contains a braid; we can actually replicate this result using a width system on the affine permutation, as depicted in Figure~\ref{fig.minimalAffine321}.  The Lemma ensures that the width of a minimal $[321]$-pattern under this width system has a total width of at most $2N-2$.  One must consider the case when the total width of a minimal $[321]$-instance is greater than $N$, but nothing untoward occurs in this case: the width system is bountiful and allows a factorization of $x$ over $[321]$.

We now prove the main result of this section.

\begin{theorem}
\label{thm:affNdpfFibers321}
Each fiber of the $\ANDPF_N$ quotient of $\ASn_N$ contains a unique $[321]$-avoiding affine permutation.
\end{theorem}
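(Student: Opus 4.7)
The plan is to mirror the structure of Theorem~\ref{thm:ndpfFibers231}, establishing existence via the bountiful affine width system and uniqueness via a canonical construction of a $[321]$-avoiding representative in each fiber.

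For existence, let $x \in \ASn_N$ contain a $[321]$-pattern. The bountiful affine $[321]$-width system of Figure~\ref{fig.minimalAffine321} yields a factorization $x = y \sigma' z$ with $\sigma' = s_i s_{i+1} s_i$ a braid and $\len(x) = \len(y) + 3 + \len(z)$. Applying the defining relation $\pi_{i+1} \pi_i \pi_{i+1} = \pi_{i+1} \pi_i$ of $\ANDPF_N$ (equivalently $\pi_i \pi_{i+1} \pi_i = \pi_{i+1} \pi_i$ via the braid relation) produces an element $x'$ in the same fiber of $P$ with $\len(x') = \len(x)-1$. Since $\len \geq 0$, iterating this length-reducing step must terminate at a $[321]$-avoiding element of the fiber of $x$.

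For uniqueness, the counting arguments of the finite case fail: by Crites' result there are infinitely many $[321]$-avoiding affine permutations. I would instead show directly that a $[321]$-avoiding preimage of any $f \in \ANDPF_N$ is uniquely determined by $f$. The fibers of $f$ partition $\ZZ$ into a skew-periodic family of intervals, with strictly increasing fiber minima $v_I$. Any preimage $x$ of $f$ under $Q$ must place $v_I$ at the rightmost position of its fiber $I$, since $f(i) = \min_{j \in I,\, j \geq i} x(j)$. If $x$ is moreover $[321]$-avoiding, two further constraints are forced: within each fiber the non-minimum values must appear in strictly increasing order (otherwise, a descent among non-minima combined with the trailing minimum is a $[321]$-pattern); and across fibers the non-minimum values must be allocated monotonically (otherwise, a non-minimum in an earlier fiber that exceeds a non-minimum in a later fiber combines with the latter's minimum to form a $[321]$-pattern). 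A case analysis across the fiber memberships of the three positions of a hypothetical $[321]$-pattern shows these two constraints are not only necessary but also sufficient for $[321]$-avoidance.

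The main obstacle is assembling these constraints into a consistent section $R : \ANDPF_N \to \ASn_N$ of $P$ in the affine setting. The greedy construction---place the ``available'' values, namely the complement in $\ZZ$ of the skew-periodic family of fiber minima, in increasing order across the non-minimum positions of fibers taken left to right---uniquely determines $R(f)$. The delicate verification is that this greedy assignment respects skew-periodicity ($R(f)(i+N) = R(f)(i) + N$) and satisfies the sum rule $\sum_{i=1}^N R(f)(i) = \binom{N+1}{2}$, so that $R(f)$ truly lies in $\ASn_N$, and that $[321]$-avoidance holds across window boundaries. The last point uses the earlier lemma bounding the span of minimal affine $[321]$-patterns to at most $2N$, reducing the analysis to a finite check. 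Once $R$ is shown to be a right inverse of $Q$ landing among $[321]$-avoiding elements, the necessity of the two constraints forces every $[321]$-avoiding preimage of $f$ to coincide with $R(f)$, completing the proof.
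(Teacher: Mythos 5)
Your plan lines up closely with the paper's proof. Existence is handled identically: Green's braid characterization (or the bountiful affine width system) gives a factorization $x = y\,\pi_i\pi_{i+1}\pi_i\,z$, the defining relation of $\ANDPF_N$ deletes one generator, and the length strictly drops, so iteration terminates at a $[321]$-avoiding representative. Uniqueness is also the same in spirit: both you and the paper observe that $Q$ forces the fiber-minimum values onto the rightmost position of each fiber, that these are strictly increasing, and that for a $[321]$-avoiding preimage the remaining values, read by position, must also be strictly increasing — otherwise a larger value earlier, a smaller value later, and the trailing fiber minimum form a $[321]$. This determines $x$ up to a single integer degree of freedom, and the sum rule kills that degree of freedom.

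Two places where your framing is slightly off-target compared to the paper's argument, and worth tightening. First, the greedy construction does \emph{not} ``uniquely determine $R(f)$'': matching two bi-infinite, skew-periodic increasing sequences in an order-preserving way leaves a $\ZZ$-worth of offsets. The sum rule is not a \emph{verification} that the construction lands in $\ASn_N$ — it is the constraint that selects exactly one offset, which is precisely how the paper closes the argument (a strict inequality on $\sum_{i=1}^N x(i)$ between two would-be preimages violates the sum rule). Second, you propose to verify directly that the two monotonicity constraints are sufficient for $[321]$-avoidance across window boundaries; the paper sidesteps this entirely by invoking the existence half of the theorem — some $[321]$-avoiding preimage exists, and the constraints show any such preimage is the one you just characterized. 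Your direct sufficiency check would yield a more self-contained uniqueness argument, but it is extra work the paper avoids, and you would want to be careful with the affine case analysis (the paper's Lemma bounding the span of a minimal affine $[321]$-instance is the right tool). Neither issue is fatal; both resolve along the paper's lines.
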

\begin{proof}
We first establish that each fiber contains a $[321]$-avoiding affine permutation, and then show that this permutation is unique.

Recall the algebraic quotient map $P: H_0(\ASn_N) \rightarrow \ANDPF_N$, which introduces the relation $\pi_i \pi_{i+1}\pi_i = \pi_{i+1} \pi_i$.  

Choose an arbitrary affine permutation $x$; we show that the fiber $Q^{-1}\circ Q(x)$ contains a $[321]$-avoiding permutation.  If $x$ is itself $[321]$-avoiding, we are already done.  So assume $x$ contains a $[321]$-pattern.  As shown by Green~\cite{Green.2002}, an affine permutation $x$ contains a $[321]$-pattern if and only if $x$ has a reduced word containing a braid; thus, $x=y \pi_i \pi_{i+1} \pi_i z$ for some permutations $y$ and $z$ with $\len(x)=\len(y)+3+\len(z)$.  Applying the $\ANDPF_N$ relations, we may set $x'= y \pi_{i+1} \pi_i z$, and have $Q(x)=Q(x')$, with $\len(x')=\len(x)-1$.  If $x'$ contains a $[321]$, we apply this trick again, reducing the length by one.  Since $x$ is of finite length, this process must eventually terminate; the permutation at which the process terminates must then be $[321]$-avoiding.  Then the fiber $Q^{-1}\circ Q(x)$ contains a $[321]$-avoiding permutation.

We now show that each fiber contains a unique $[321]$-avoiding affine permutation, using the combinatorial quotient map.

Let $x$ be $[321]$-avoiding, and let $Q(x)=f$ an affine non-decreasing parking function; we use information from $f$ to reconstruct $x$.  Let $\{m_i\}$ be the set of elements of $\ZZ$ that are maximal in their fibers under $f$.  By the construction of the combinatorial quotient map, we have $x(m_i)=f(m_i)$ for every $i$.  Since $f$ is in $\ANDPF_N$, we have $x(m_i)<x(m_{i'})$ whenever $i<i'$; thus $\{x(m_i)\}$ is a strictly increasing sequence.

Let $\{m_{i,j}\} = f^{-1}\circ f(m_i) \setminus \{m_i\}$, with $m_{i,j}<m_{i,j+1}$ for every $j$.  Notice that if $i<i'$ and $j<j'$ then $m_{i,j}<m_{i',j'}$.  

We claim that if $i<i'$ and $j<j'$, then $x(m_{i,j})<x(m_{i',j'})$.  If not, then we have 
\[
x(m_{i'}) < x(m_{i',j'}) < x(m_{i,j}), \text{ with }  m_{i,j} < m_{i',j'} < m_{i'},
\]
in which case $x$ contains a $[321]$-pattern, contrary to assumption.  Thus, the sequence $\{x(m_{i,j})\}$ with $i$ and $j$ arbitrary is a strictly increasing sequence.

Now $\{f(m_i) = x(m_{i})\}$ and $\{x(m_{i,j})\}$ are two increasing sequences.  Since $x$ is a bijection, and every $z\in \ZZ$ is either an $m_i$ or an $m_{i,j}$, $x$ is determined by the choice of $x(m_{1,1})$.  A valid choice for $x(m_{1,1})$ exists, since every $f$ arises as the image of some affine permutation under $Q$, and every fiber contains some $[321]$-avoiding element.  

One can show that the choice of $x(m_{1,1})$ is uniquely determined by the following argument.  Suppose two valid possibilities exist for $x(m_{1,1})$, giving rise to two different $[321]$-avoiding affine permutations $x$ and $x'$.  Suppose without loss of generality that $1\leq m_{1,1}\leq N$, and that $x(m_{1,1})<x'(m_{1,1})$.  Then: 
\begin{align*}
\binom{N+2}{2} &=&  \sum_{k=1}^{N} x(k) \\
 &=& \sum ( x(m_i) + \sum x(m_{i,j}) ) \text{ where $m_i$, $m_{i,j} \in \{1, \ldots, N\}$ }\\
 &<& \sum ( x'(m_i) + \sum x'(m_{i,j}) ) \text{ where $m_i$, $m_{i,j} \in \{1, \ldots, N\}$ }\\
 &=& \sum_{k=1}^{N} x'(k) \\
 &=& \binom{N+2}{2},
\end{align*}
providing a contradiction.  Hence $x(m_{1,1})$ is uniquely determined, and thus each fiber of $Q$ contains a unique $[321]$-avoiding permutation.
\end{proof}

\bibliographystyle{amsalpha}

\bibliography{DissertationBibliography}

\end{document}